\def\ps@pprintTitle{%
}
\newcommand{\todo}[1]{} 
\crefname{lem}{Lemma}{Lemmas}
\crefname{defn}{Definition}{Definitions}
\crefname{prop}{Proposition}{Propositions}
\newcommand\la{\langle}
\newcommand\ra{\rangle}
\pgfplotsset{compat=1.13}
\pgfplotsset{compat=1.13}
\newcommand{\hp}{{h}}
\newcommand{\hx}{{\Delta x}}
\newcommand{\hht}{{\tau}}
\newcommand\ot{{\widebar{t}}}
\newcommand\oX{{\widebar{X}}}
\newcommand\oY{{\widebar{Y}}}
\newcommand\oZ{{\widebar{Z}}}
\newcommand\op{{\widebar{p}}}
\newcommand{\be}{\mathrm{\psi}}
\newcommand\mH{{\mathcal{H}}}
\newcommand\mN{{\mathcal{N}}}
\newcommand\mM{{\mathcal{M}}}
\newcommand\mO{{\mathcal{O}}}
\newcommand\mU{{\mathcal{U}}}
\newcommand\mV{{\mathcal{V}}}
\newcommand{\mT}{\mathcal{T}}
\newcommand{\mY}{\mathcal{Y}}
\newcommand{\mX}{\mathcal{X}^{\Delta x}}
\newcommand\EE{\mathbb{E}}
\newcommand\RR{\mathbb{R}}
\newcommand\PP{\mathbb{P}}
\newcommand\UU{\mathbb{U}}
\newcommand\VV{\mathbb{V}}
\newcommand\sig{{\sigma}}
\newcommand\lbd{{\lambda}}
\newcommand\lbdmin{{\lbd_{\min}}}
\newcommand{\Tr}{\mathrm{Tr}}
\newcommand{\vexp}{\mathrm{Vex}_p}
\newcommand{\Int}{{\mathrm{Int}}}
\newcommand{\oxn}{{\widebar{x}_n}}
\newcommand{\ott}{{\widebar{t}_\hht}}
\newcommand{\oxt}{{\widebar{x}_\hht}}
\newcommand{\opt}{{\widebar{p}_\hht}}
\newcommand{\ox}{\widebar{x}}
\newcommand{\xl}{{x_\ell}}
\newcommand{\tp}{\tilde{p}}
\DeclareMathOperator{\diam}{\mathrm{diam}}
\newcommand{\I}{\texttt{I}}
\newcommand{\II}{\texttt{II}}
\newcommand{\III}{\texttt{III}}
\newcommand{\IV}{\texttt{IV}}
\newcommand{\fp}{\boldsymbol{\mathsf{p}}}
\newcommand{\bi}{\boldsymbol{\mathsf{i}}}
\newcommand\tn{{t_n}}
\newcommand{\reff}{{\mathrm{ref}}}
\newcommand{\signTlx}{\sigma_{n}^{-T}(x)}
\newcommand{\signTlxp}{\sigma_{n}^{-T}(x')}
\newcommand{\signTl}{\sigma_{n}^{-T}}
\newcommand{\ind}{\mathds{1}}
\newcommand{\llvert}{\left\lvert}
\newcommand{\rrvert}{\right\rvert}
\newcommand{\dt}{\partial_t}
\newcommand{\D}{\displaystyle}
\newcounter{gr111}
\newenvironment{steps}
{\begin{list} {\bf Step \arabic{gr111}.$\, $} {\usecounter{gr111}
			\setlength{\labelwidth}{-0.2cm} \setlength{\leftmargin}{0.0cm}
			\setlength{\topsep}{0.1cm} \setlength{\itemsep}{0.2cm}
			\setlength{\parsep}{0.1cm} \setlength{\itemindent}{0.4cm}
			\setlength{\parskip}{0.0cm}}} 
	{\end{list}}
\newcounter{insert}
\newenvironment{assertion}
{\begin{list} {(\roman{insert})$\,$} {\usecounter{insert}
			\setlength{\labelwidth}{-0.2cm} \setlength{\leftmargin}{0.0cm}
			\setlength{\topsep}{0.1cm} \setlength{\itemsep}{0.2cm}
			\setlength{\parsep}{0.1cm} \setlength{\itemindent}{0.4cm}
			\setlength{\parskip}{0.0cm}}} 
	{\end{list}}
\crefname{assertion}{item}{item}
\newtheorem{theorem}{Theorem}[section]
\newtheorem{defn}[theorem]{Definition}
\newtheorem{lem}[theorem]{Lemma}
\newtheorem{prop}[theorem]{Proposition}
\newtheorem{algo}{Algorithm}[section]
\newtheorem{rem}[theorem]{Remark}
\numberwithin{equation}{section}
\begin{document}
	\begin{frontmatter}
		\title{Numerical approximation of the value of a stochastic differential game with asymmetric information\tnoteref{t1}}
		
		\tnotetext[t1]{This research was supported by the German Research Foundation as part of the Collaborative Research Center SFB1283. \flushright \today}
		
		\author{\v{L}ubom\'ir Ba\v{n}as\fnref{a1}}
		\author{Giorgio Ferrari\fnref{a2}}
		\author{Tsiry Avisoa Randrianasolo\fnref{a1}}
		
		\address[a1]{Faculty of Mathematics, Bielefeld University, Universit{\"a}tsstr.\,25, D-33615 Bielefeld}
		\address[a2]{Center for Mathematical Economics, Bielefeld University, Universit{\"a}tsstr. 25, D-33615 Bielefeld}
		
		\begin{abstract}
			We consider a convexity constrained Hamilton--Jacobi--Bellman-type obstacle problem for the value function of a zero-sum differential game with asymmetric information.
			We propose a convexity-preserving probabilistic numerical scheme for the approximation of the value function which is discrete w.r.t.\ the time and convexity variables,
			and show that the scheme converges to the unique viscosity solution of the considered problem.
			Furthermore, we generalize the semi-discrete scheme to obtain an implementable fully discrete numerical approximation of the value function
			and present numerical experiments to demonstrate the properties of the proposed numerical scheme.
		\end{abstract}
		
		\begin{keyword}
			zero-sum stochastic differential games\sep asymmetric information\sep probabilistic numerical approximation\sep discrete convex envelope\sep convexity constrained Hamilton--Jacobi--Bellmann equation\sep viscosity solution
		\end{keyword}
		
	\end{frontmatter}
	
	\section{Introduction}\label{sec:0}
	{  
		In this paper we consider the Hamilton--Jacobi--Bellman-type obstacle problem
		\begin{equation}\label{eq:1}
		\begin{split}
		\min&\left\{ \dt V + \frac12\Tr(\sigma\sigma^T(t,\;x)D^2_x V) + H(t,\;x,\;D_x V,\;p) ,\,\lambda_{\min}\left(p,\;\frac{\partial^2 V}{\partial p^2}\right)  \right\} = 0,
		\\
		V&(T,\;x,\;p) = \la p,\;g\ra,
		\end{split}
		\end{equation}
		where $V\equiv V(t,x,p)$,  $(t,x,p)\in [0,T]\times \RR^d\times \Delta(I)$, $\Delta(I)$ denotes the set of probability vectors $p = (p_1,\ldots,p_I)\in{(0,1)^I}$ that satisfy $\sum_{i=1}^{I}p_i = 1$
		and the Hamiltonian $H$ will be specified below.
		The convexity of the solution $V$ with respect to the variable $p$ is enforced via the obstacle term $\lambda_{\min}\left(p,\;\frac{\partial^2 V}{\partial p^2}\right)$, which
		is the minimal eigenvalue of the Hessian matrix  $\frac{\partial^2 V}{\partial p^2}$ on the tangent cone to $\Delta(I)$. More precisely, for a symmetric $I\times I$ matrix $A$ we denote
		\begin{equation*}
		\lbdmin (p,\;A) :=\min_{z\in T_{\Delta(I)(p)\setminus\{0\}}}\frac{\la A z, \;z\ra}{\lvert z\rvert^2},
		\end{equation*}
		where $T_{\Delta(I)(p)} = \overline{\bigcup_{\delta>0}(\Delta(I)-p)/\delta}$ is the tangent cone to $\Delta(I)$ at $p\in\Delta(I)$, cf.\ \cite{cardaliaguet2009a}.
	}
	
	{Problem (\ref{eq:1}) describes the value of a class of zero-sum stochastic differential games with asymmetric information, cf.\ \cite{gruen2012aprobabilistic}.} Since the seminal work by Aumann and Maschler (see \cite{maschler1995repeated}) in the framework of repeated games, the literature on games with asymmetric information experienced an increasing interest (\cite{meyer1999cav, mertens1971the, rosenberg2004stochastic}, among many others), recently also in continuous-time differential settings (see, e.g., \cite{cardaliaguet2009stochastic, cardaliaguet2013pathwise, cardaliaguet2012a, gensbittel2018a, gruen2013on, wu2017existence, wu2018existence}).
	
	As in \cite{gruen2012aprobabilistic}, in our game both players can adjust the dynamics of a non degenerate It\^o-diffusion by controlling the drift via regular controls taking values in some compact subset of a finite dimensional space. However, one player has more information than the other in the following sense (cf.\ \cite{maschler1995repeated} and \cite{cardaliaguet2009stochastic}). Before the game starts, the payoffs of the game are chosen randomly with some probability $p$ from a finite collection of size $I$, and the information on which payoffs have been realized is transmitted only to one player. Since we assume that both players can observe the actions of the other one, the uninformed player infers which game is actually played through the moves of the informed one. It turns out that it is optimal for the informed player to release information to the uninformed one in a sophisticated way aiming at manipulating the beliefs of the latter player (see \cite{cardaliaguet2009stochastic}). 
	
	The numerical analysis of our paper hinges on the theoretical results of \cite{cardaliaguet2009stochastic}.
	There it is shown (in a setting actually more general than ours) that the previously described game has a value $V$, whenever the so-called Isaacs conditions are satisfied and additional technical requirements on the problem’s data area fulfilled. 
	{The value function $V$ depends on time $t$, on the state variable $x$, and on a probability vector $p\in \Delta(I)$;} this latter variable describes the initial value of the beliefs of the uninformed player about the game she is playing. Moreover, it is shown in 
	\cite{cardaliaguet2009a}, that $V$ can be characterized as the unique continuous viscosity solution (in the dual sense) to a second-order partial differential equation complemented by a convexity constraint with respect the parameter $p$.

	There exist only few results on numerical approximation of differential games with incomplete information.
	Numerical approximation of (deterministic) differential games with incomplete information was first studied in \cite{cardal09num}
	and generalized to games with incomplete information on both sides in \cite{souquiere2010approximation}. As far as we are aware the only work on numerical approximation
	of stochastic differential games with incomplete information is \cite{gruen2012aprobabilistic}.
	We note that all three aforementioned works only consider semi-discretization in the time-variable and the remaining variables are kept continuous, hence, the schemes are not implementable.

	In this paper we generalize the probabilistic numerical approximation of \cite{gruen2012aprobabilistic} to include the discretization of the convex envelope, i.e.,
	we propose a structure preserving probabilistic numerical approximation that 
	is discrete in time and in the variable $p$ and preserves the convexity of the solution.
	We show that the proposed numerical approximation converges to the unique viscosity solution of (\ref{eq:1}).
	{The discretization in the probability variable $p$ is constructed by approximating the lower convex envelope of the semi-discrete solution in $p$
		by its finite-dimensional counterpart. The discrete lower convex envelope is computed over a finite set of values which coincide with nodes of a simplicial partition of $\Delta(I)$.
		The resulting approximation is monotone and inherits the Lipschitz continuity properties of the solution.
	}
	{To further reduce the complexity of the numerical approximation we employ random walk
		instead of the usual Wiener increments to simulate the associated It\^o-diffusion process.
	}
	Furthermore, we propose an implementable fully discrete numerical scheme by combining the semi-discrete probabilistic approximation in time and $p$
	with a spatial discretization that employs linear interpolation in the state variable $x$ over a simplicial partition.
	
	The paper is organized as follows.
	In Section~\ref{sec2} we collect basic definitions and assumptions on the considered problem.
	In Section~\ref{sec_num} we introduce a probabilistic numerical scheme for the approximation of (\ref{eq:1})
	which is discrete in the time variable $t$ and the convexity variable $p$ and summarize the regularity properties
	of the numerical approximation in Section~\ref{sec_reg}.
	Convergence of the numerical approximation to the viscositiy solution is shown in Section~\ref{sec_conv}.
	Finally, an implementable fully discrete numerical approximation of the problem 
	is introduced in Section~\ref{sec_comput} along with
	numerical studies which demonstrate the practicability of the proposed approach.

	\section{Assumptions and preliminaries}\label{sec2}

	Throughout the paper, the scalar product of two vectors $x = (x_1,\ldots,x_d)$ and $y= (y_1,\ldots,y_d)$ of $\RR^d$ is denoted by $\la x, \; y\ra  := \sum_{i=1}^{d}x_iy_i$ 
	and the $\ell^1$-norm is denoted by $\lvert x\rvert:=\sum_{i=1}^{d}\lvert x_i\rvert$; furthermore, we use $|\cdot|_{\infty}$ and $\|\cdot\|_{\infty}$ to respectively denote the $\ell^\infty$-norm and the $L^\infty(\mathbb{R}^d)$-norm.
	
	\subsection{Description of the game}
	
	{{Since the aim of this paper is to provide a numerical approximation of the solution to (\ref{eq:1}), we only provide here a brief and informal description of the stochastic differential game related to the problem (\ref{eq:1}) and simply refer to \cite{gruen2012aprobabilistic} for detailed discussion of the game and further references.}}
	We consider a two-player zero-sum differential game where two players control the $d$-dimensional It\^o process defined for $t\in[0,\;T]$, $x\in\RR^d$ as
	\begin{equation}\label{eq:state}
	\begin{array}{rcl}
	\D dX^{t,\; x,\;  u,\;  v}_s & = & \D b(s, \; X^{t,\;  x, \; u,\;  v}_s,\; u_s,\;  v_s)ds + \sig(s,\; X^{t,\;  x, \; u,\;  v}_s) d B_s\qquad s\in [t,\;T]\,,\\ 
	\D X^{t, \; x, \; u,\;  v}_t & = & x.
	\end{array}
	\end{equation}
	Here $B = \big\{B_s:s\in[t,\;T]\big\}$ is a $d$-dimensional Brownian motion {{on a complete probability space}}, $b$ and $\sigma$ are suitable Borel-Measurable functions and the controls $(u,\;v)\in \UU\times\VV$ and $\UU,\;\VV$ are compact subsets of some finite dimensional spaces.
	
	The game is characterized by $I$ configurations with respective running costs $(\ell_i)_{i\in \left\{1,\ldots,I\right\}}:[0,\;T]\times\RR^d\times \UU \times \VV\rightarrow\RR$
	and terminal payoffs $(g_i)_{i\in \left\{1,\ldots,I\right\}}:\RR^d\rightarrow\RR$
	and is played as follows.
	Before the game starts, one configuration $i\in \{ 1, \dots,  I\}$ is chosen with probability $p_i$
	and the choice of $i$ is communicated to Player 1. Player 2 only knows the probability distribution $p\in \Delta(I)$ of the respective configurations.
	Once the game has started, both players adjust their control to minimize, for the Player 1, and to maximize, for the Player 2, the {expected payoff}, cf. \cite[Section~6.3]{cardaliaguet2009a}.  
	We assume that both players observe their opponent's control. 
	
	\subsection{General assumptions}\label{ssec:11}
	The drift term $b$, the diffusion term $\sigma :=\left(\sigma_{k,l}\right)_{{k,l}} $, the running cost $\left(\ell_i\right)_{i\in\left\{1,\ldots,I \right\}}$, the terminal payoff $g:=\left(g_i\right)_{i\in\left\{1,\ldots,I \right\}}$, and the Hamiltonian $H$, cf. \eqref{eq:1}, satisfy the following assumptions:
	
	\crefname{enumi}{Assumption}{Assumptions}
	\begin{enumerate}[label=($A_\arabic*)$]
		\item $b:[0,\;T]\times\RR^d\times \UU \times \VV\rightarrow\RR^d$ is bounded and continuous in all its variables and Lipschitz continuous with respect to $(t,\;x)$ uniformly in $(u,\;v)\in \UU \times \VV$.
		\label{A1}
		\item For $1\leq k,l\leq d$ the function $\sigma_{k,l}:[0,\;T]\times\RR^d\rightarrow\RR$ is bounded and Lipschitz continuous with respect to $(t,\;x)$. For any $(t,\;x)\in [0,\;T]\times \RR^d$ the matrix $(\sigma^T)^{-1}$, where the superscript $T$ means transpose, is non-singular, bounded, and Lipschitz continuous with respect to $(t,\;\;x)$.
		\label{A2}
		\item $(\ell_i)_{i\in \left\{1,\ldots,I\right\}}:[0,\;T]\times\RR^d\times \UU \times \VV\rightarrow\RR$ is bounded and continuous in all its variables and Lipschitz continuous with respect to $(t,\;x)$ uniformly in $(u,\;v)$. 
		
		\noindent$(g_i)_{i\in \left\{1,\ldots,I\right\}}:\RR^d\rightarrow\RR$ is bounded and uniformly Lipschitz continuous.
		\label{A3}
		\item \textit{Isaacs condition}: for all $(t,\;x,\;z,\;p)\in [t_0,\;T]\times\RR^d\times\RR^d\times\Delta(I)$
		\begin{equation*}
		\begin{split}
		H(t,\;x,\;z,\;p)&:=\inf_{u\in\UU}\sup_{v\in\VV} \bigg\{\la b(t,\;x,\;u,\;v),\;z\ra + \sum_{i=1}^{I}p_i\ell_i(t,\;x,\;u,\;v)\bigg\} 
		\\
		&= \sup_{v\in \VV}\inf_{u\in \UU} \bigg\{\la b(t,\;x,\;u,\;v),\;z\ra + \sum_{i=1}^{I}p_i\ell_i(t,\;x,\;u,\;v)\bigg\}.
		\end{split}
		\end{equation*}
		\label{A4}

	\end{enumerate}
	We note that as a consequence of Assumptions~\ref{A1}-\ref{A3} there exists a constant $C>0$ such that for all $t,t'\in[0,\;T],\,x,x'\in\RR^d,\;\,z,z'\in\RR^d,\;\,p,p'\in\Delta(I)$, the following hold
	\begin{equation}\label{eq:3}
	\lvert H(t,\;x,\;z,\;p)\rvert\leq C(1+\lvert z\rvert),
	\end{equation}
	\begin{equation}
	\label{eq:4}
	\begin{split}
	\lvert H(t,\;x,\;z,\;p)-H(t',\;x',\;z',\;p')\rvert
	&\leq C(1+\lvert z\rvert)(\lvert x-x'\rvert + \lvert t-t'\rvert)
	\\
	&\hspace{10pt}+ C\lvert z-z'\rvert + C\lvert p-p'\rvert.
	\end{split}
	\end{equation}
	
	\subsection{Viscosity solution of \eqref{eq:1}}
	Under the assumptions in the previous section Cardaliaguet \cite{cardaliaguet2009a,cardaliaguet2009stochastic} 
	established that {{there exists a unique uniformly bounded viscosity solution of problem (\ref{eq:1}), which is convex and uniformly Lipschitz continuous in p.}}
	We recall the notion of viscosity solution as well as the corresponding notions of subsolutions and supersolutions to \eqref{eq:1} below, cf. \cite{cardaliaguet2009a}, \cite{cardaliaguet2009on}.
	
	\begin{defn}
		We say that $V$ is a subsolution of \eqref{eq:1} if $V = V(t,\;x,\;p)$ is upper semicontinuous and if, for any smooth test function $\phi:(0,\;T)\times\RR^d\times\Delta(I)\rightarrow\RR $ such that $V-\phi$ has a local maximum on $[0,\;T]\times\RR^d\times\Delta(I)$ at some point $(\ot,\;\ox,\;\op)\in[0,\;T]\times\RR^d\times\Delta(I)$, one has
		\begin{equation}\label{eq:subv}
		\min\left\{ \dt \phi + \frac12\Tr(\sigma\sigma^T(t,\;x)D^2_x \phi) + H(t,\;x,\;D_x \phi,\;p) ,\,\lambda_{\min}\left( p,\;\frac{\partial^2\phi}{\partial p^2}\right)  \right\} \geq 0,
		\end{equation}
		at $(t,\;x,\;p)=(\ot,\;\ox,\;\op)$.
		
		We say that $V$ is a supersolution of \eqref{eq:1} if $V = V(t,\;x,\;p)$ is lower semicontinuous and if, for any smooth test function $\phi:(0,\;T)\times\RR^d\times\overline{\Delta(I)}\rightarrow\RR $ such that $V-\phi$ has a local minimum on $[0,\;T]\times\RR^d\times\overline{\Delta(I)}$ at some point $(\ot,\;\ox,\;\op)\in[0,\;T]\times\RR^d\times \Delta(I)$, one has
		\begin{equation}\label{eq:supv}
		\min\left\{ \dt \phi + \frac12\Tr(\sigma\sigma^T(t,\;x)D^2_x \phi) + H(t,\;x,\;D_x\phi,\;p) ,\,\lambda_{\min}\left(p,\;\frac{\partial^2\phi}{\partial p^2}\right)  \right\}  \leq 0,
		\end{equation}
		at $(t,\;x,\;p)=(\ot,\;\ox,\;\op)$.
		
		We say that $V$ is a viscosity solution of \eqref{eq:1} if $V$ is a sub- and  a supersolution of \eqref{eq:1}.
	\end{defn}
	
	\begin{rem}
		For a smooth test function $\phi:(0,\;T)\times\RR^d\times\Delta(I)\rightarrow\RR $ such that $V-\phi$ has a local maximum on $[0,\;T]\times\RR^d\times\Delta(I)$ at some point $(\ot,\;\ox,\;\op)\in[0,\;T]\times\RR^d\times\Delta(I)$, we have that \eqref{eq:subv} is equivalent to
		\begin{align}\label{eq:13}
		\dt \phi + \frac12\Tr(\sigma\sigma^T(t,\;x)D^2_x \phi) + H(t,\;x,\;D_x \phi,\;p)
		&\geq 0\quad\mbox{and}\quad\lambda_{\min}\left(p,\frac{\partial^2\phi}{\partial p^2}\right)\geq 0;
		\intertext{ and for the smooth test function $\phi:(0,\;T)\times\RR^d\times\Delta(I)\rightarrow\RR $ such that $V-\phi$ has a local minimum on $[0,\;T]\times\RR^d\times\overline{\Delta(I)}$ at some point $(\ot,\;\ox,\;\op)\in[0,\;T]\times\RR^d\times \overline{\Delta(I)}$, \eqref{eq:supv} is equivalent to}
		\label{eq:14}
		\dt \phi + \frac12\Tr(\sigma\sigma^T(t,\;x)D^2_x \phi) + H(t,\;x,\;D_x \phi,\;p)
		&\leq 0\quad\mbox{ or}\quad\lambda_{\min}\left(p,\;\frac{\partial^2\phi}{\partial p^2}\right)\leq 0.
		\end{align}
	\end{rem}

	\section{Numerical approximation}\label{sec_num}
	
	
	{{To simplify the subsequent numerical approximation, we perform a change of measure via the Girsanov transform in the spirit of, for instance, \cite[Lemma~3.8]{gruen2012a} or \cite{hamadene1995zero}, and instead of the controlled process (\ref{eq:state}) we consider the simpler process 
			\begin{equation}\label{eq:state1}
			\begin{array}{rcl}
			\D dX^{t,\;x}_s  & = & \sig(s,\;X^{t,\;x}_s) d B_s\qquad s\in [t,T]\,,\\
			\D X^{t,\;x}_t & = & x,
			\end{array}
			\end{equation}
			for $t\in[0,\;T]$ and $x\in\RR^d$. Notice that the dynamics in \eqref{eq:state1} is independent of the players' controls.}}
	
	For a fixed $N\in \mathbb{N}$ and a step size $\hht := T/N$ we introduce an equidistant partition $\Pi_{\hht} := \big\{t_n\big\}_{n=0}^N$, $t_n = n\tau$ of the time interval $[0,\;T]$.
	We define the discrete process $( \oX_{n}^{n',\;x})_{n'=n,\ldots,\;N}$ as the weak Euler approximation of \eqref{eq:state1}, that is
	\begin{equation}\label{eq:Xeuler}
	\oX_{n}^{n',\;x} = x +\sum_{j=n'}^{n-1}\sigma(t_j,\;\oX_{j}^{n',\;x})\xi_j\sqrt{\hht},
	\end{equation}
	where $\xi_{n}\sqrt{\hht} = (\xi_{n}^1,\ldots,\;\xi_{n}^d)\sqrt{\hht}$, $n=1,\dots, N$ {is a suitable approximation of the $\RR^d$-valued Wiener increment $[W(t_n)-W(t_{n-1})] \sim \mathcal{N}(0,\tau)$}.
	Here, we take $\xi_{n}$ to be a $\RR^d$-valued binomial random walk, i.e. $\xi_{n}^1,\ldots,\xi_{n}^d$ are i.i.d. random variables with the law $\PP(\xi_{n}^k = \pm 1) = 1/2$, for every $k = 1,\ldots,d$;
	the analysis below can be easily modified to cover other choices such as, e.g., a trinomial random walk or the discrete Wiener increments.
	In the following we abbreviate $\sig_{n}(x):=\sig(t_n,\;x)$ and $\sig^{-T}_n(x):=(\sig^T(t_n,\;x))^{-1}$.
	The approximation obtained after one step of the Euler scheme (\ref{eq:Xeuler}) will be denoted as
	\begin{equation}\label{xeuler1}
	\oX^{x}_{n+1}:=\oX_{n+1}^{n,\;x} = x + \sigma_n(x)\xi_{n}\sqrt{\hht} \qquad x\in\mathbb{R}^d.
	\end{equation}

	Let $\{\mM^{\hp}\}_{h>0}$ be a family of regular partitions of $\Delta(I)$ into open $(I-1)$-simplices $K$
	{(i.e., line segments, triangles, tetrahedra for $I = 2,3,4$, respectively)} with mesh-size $\hp = \max_{K\in\mM^\hp}\{\diam(K)\}$
	such that $\overline{\Delta(I)} = \cup_{K\in\mM^\hp}\overline{K}$. 
	The set of vertices of all $K\in \mM^\hp$ is denoted by $\mN^{h}:=\{p_1,\ldots,p_M\}$. 
	
	%
	The approximation of the value function $V(t_n,\;x,\;p_m)$ is denoted by $V^{m}_n(x)$ for $t_n\in\Pi_\hht,\;x\in\RR^d,\;p_m\in\mN^\hp$.
	The discrete numerical solution $V^{m}_n(x)$, $x\in \RR^d$, $n=0,\dots, N-1$, $m=1,\dots,M$ is obtained by the following algorithm.
	\begin{algo}\label{algo:algo}
		For $x\in \RR^d$ set $V^{m}_N(x) = \la p_m,\;g(x)\ra$ for $p_m \in \mN^{h}$, $m=1,\ldots,M$,  
		set $\mV_{N}(x)=\big\{V^{1}_{N}(x), \ldots, V^{M}_{N}(x)\big\}$ and proceed for $n  = N-1,\ldots,0$ as follows: 
		\begin{enumerate}
			\item Forward step: for $x\in\RR^d$ compute:
			\begin{equation}\label{eq:algo2}
			\oX_{n + 1}^x = x + \sigma_{n}(x)\xi_{n}\sqrt{\hht};
			\end{equation}
			\item Backward step: for $x\in\RR^d$ and $m=1,\ldots,M$ set
			\begin{align}
			\label{eq:algo3}
			\oZ^{m}_{n}(x) 
			&=\frac{1}{{\hht}}\EE\big[V_{n + 1}^{m}(\oX^{x}_{n + 1})\signTlx\xi_{n}\sqrt{\hht}\big],
			\\
			\label{eq:algo4}
			\oY^{m}_{n}(x) &=\EE\big[V^{m}_{n + 1}(\oX^{x}_{n+1})\big] + \hht H\big(\tn,\;x,\;\oZ^{m}_{n}(x),\;p_m\big)\,;
			\end{align}
			%
			\item Convexification: for $x\in\RR^d$ compute the discrete lower convex envelope $\big\{V^{1}_{n}(x), \ldots, V^{M}_{n}(x)\big\}$ of $\big\{\oY^{1}_{n}(x),\ldots,\oY^{M}_{n}(x)\big\}$ as
			\begin{equation}\label{eq:algo5}
			V^{m}_{n}(x) =\vexp\big[\oY^{1}_{n}(x),\ldots,\oY^{M}_{n}(x)\big](p_m)\qquad p_m\in\mathcal{N}^h,\,\, m=1,\ldots,M.
			\end{equation}
		\end{enumerate}
	\end{algo}
	
	We note that for $p\in\overline{\Delta(I)}$ the lower convex envelope in \eqref{eq:algo5} is the solution of the minimization problem, cf. \cite{carnicer1992convexity},
	\begin{equation}\label{eq:vex0}
	\begin{split}
	&\vexp\big[\oY^{1}_{n}(x),\ldots,\oY^{M}_{n}(x)\big](p) := \min\bigg\{\sum_{k=1}^{I}\oY^k_{n}(x)\lambda_k;\,\,\, \sum_{k=1}^{I}\lambda_k=1,\,\,\, \lambda_k\geq 0,\,\,\,\sum_{k=1}^{I}\lambda_k p_{k} = p\bigg\}.
	\end{split}
	\end{equation}
	{We will discuss efficient algorithms for the computation of the discrete lower convex envelope (\ref{eq:vex0}) in Section~\ref{sec_alg_impl}.}

	{It is well known that the piecewise linear interpolation does not preserve the convexity of the interpolated data. 
		Nevertheless, cf.\ \cite[Corollary~2.3.]{carnicer1992convexity}, there exists a data dependent (regular) simplicial partition $\mathcal{M}^h_{n,x}$ of $\Delta(I)$ 
		with nodes $\mN^{h}_{n,x} := \{\pi_{n,x}^1,\dots, \pi_{n,x}^{M_{n,x}}\}\subseteq \mN^{h}$ 
		such that the piecewise linear interpolant of the data values at the nodes $\mN^{h}_{n,x}$ over the partition $\mathcal{M}^h_{n,x}$ (for a precise definition see (\ref{eq:pinter}) below)
		agrees with the discrete data values $\big\{\big(p_m, V_n^m(x)\big)\big \}_{m=1}^M$, $p_m\in\mathcal{N}^h$ of the discrete lower convex envelope (\ref{eq:algo5}) for fixed $0\leq n \leq N$, $x\in \mathbb{R}^d$.
		We note that the partition $\mathcal{M}^h_{n,x}$ does not necessarily coincide with the original mesh $\mathcal{M}^h$.
		
	}
	We consider the set of piecewise linear Lagrange basis functions $\{\be_{n,x}^{i},\,\, i=1,\dots, M_{n,x}\}$ associated with the set of nodes $\mN^{h}_{n,x}$ of the partition $\mathcal{M}^h_{n,x}$.
	We recall the following properties of the the Lagrange basis functions which will be frequently used throughout the paper:  
	$a)$  $\be_{n,x}^{i}(\pi_{n,x}^k) = \delta_{ik}$, where $\delta_{ik}$ is the Kronecker delta and
	$b)$  $\sum_{i=1}^{M_{n,x}}\be_{n,x}^{i}(p) = 1$ for any $p\in\overline{\Delta(I)}$.
	We note that $a)$ implies that at any point $p\in \overline{\Delta(I)}$ there are at most $I$ basis functions with non-zero value at this point, hence the sum in $b)$ reduces to $\sum_{i=1}^{I}$.
	
	We define the convex piecewise linear interpolant $V^{\hp}_n(x,\cdot)$, $x\in\mathbb{R}^d$ of the discrete lower convex envelope $\big\{V^{1}_{n}(x), \ldots, V^{M}_{n}(x)\big\}$ over the convexity preserving partition $\mathcal{M}^h_{n,x}$ as
	\begin{equation}\label{eq:pinter}
	V^{\hp}_n(x,\;p) := \sum_{i=1}^{M_{n,x}}V^{m(\pi_{n,x}^i)}_n(x)\be_{n,x}^{i}(p)\, ,
	\end{equation}
	where $m(\pi_{n,x}^i)\in \mathbb{N}$ is the index of $\pi_{n,x}^i$ in $\mathcal{N}^h$, i.e. $\pi_{n,x}^i = p_{m(\pi_{n,x}^i)}$ for some $p_{m(\pi_{n,x}^i)}\in \mathcal{N}^h$.
	We note that by construction $V^{\hp}_n(x,\;p_m) = V_n^m(x)$ for all $p_m\in\mathcal{N}^h$. For the analysis below we also consider the (possibly non-convex) interpolant over the fixed partition $\mathcal{M}^h$
	\begin{equation}\label{eq:pinter2}
	\widetilde{V}^{\hp}_n(x,\;p) := \sum_{m=1}^{M}V^m_n(x)\be^{m}(p),
	\end{equation}
	where $\{\be^{m},\,\, m=1,\dots, M\}$ is the linear Lagrange basis associated with the set of nodes $\mN^{h}$.
	By a slight abuse of notation in (\ref{eq:vex0}), we observe that
	\begin{equation}\label{vtilde}
	{V}^{\hp}_n(x,\;p) = \vexp\big[\widetilde{V}^{\hp}_n(x,\;\cdot)\big](p).
	\end{equation}

	Furthermore, we  define the time interpolant $V^\hp_\hht(t,\;x,\;p)$ of (\ref{eq:pinter}) which is continuous on $[0,T]$ as
	\begin{equation}\label{eq:tinter}
	V^{\hp}_\hht(t,\;x,\;p) := V^{\hp}_n(x,\;p)\Big(\frac{t_{n+1}-t}{\hht}\Big)+ V^{\hp}_{n+1}(x,\;p)\Big(\frac{t-\tn}{\hht}\Big), \qquad\mathrm{for}\,\,\,\, t \in [\tn,\; t_{n + 1}].
	\end{equation}
	
	\section{Regularity properties of the numerical approximation}\label{sec_reg}
	{In this section we study regularity properties of the numerical approximation obtained by Algorithm~\ref{algo:algo}.
		We establish uniform boundedness, almost H\"older continuity in time and Lipschitz continuity in $p$, and $x$, respectively,
		of the numerical solution. Furthermore, we show that the numerical approximation satisfies a monotonicity property.}
	
	We recall the following properties of the discrete lower convex envelope which are a simple consequence of its definition (\ref{eq:vex0}).
	\begin{lem}\label{prop:monotonicvexm}
		We consider the set $\mN:=\big\{p_1,\ldots,p_M\big\}\subset\overline{\Delta(I)}$ with associated scalar values $U(p_m)$, $V(p_m)$, such that $U(p_m)\leq V(p_m)$,  $m=1,\dots,M$. We denote $\mV=\big\{V(p_1),\ldots,V(p_M)\big\}\in \RR^M$ and $\mU=\big\{U(p_1),\ldots,U(p_M)\big\}\in \RR^M$. 
		The discrete lower convex envelope $\vexp$ satisfies the following properties for $p\in \overline{\Delta(I)}$:
		\smallskip
		\begin{itemize}
			\item[i)] Monotonicity: $\vexp[ \mU](p)\leq \vexp[ \mV](p)$,
			\item[ii)] Constant preservation: $\vexp [\mV + \theta](p) = \vexp[\mV](p) + \theta$ for any $\theta \in\RR$.
		\end{itemize}
	\end{lem}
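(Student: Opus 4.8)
The plan is to read off both properties directly from the linear-programming characterization \eqref{eq:vex0} of $\vexp$, the key point being that the feasible set in that minimization — the convex weights $(\lambda_k)$ subject to $\lambda_k\ge 0$, $\sum_k\lambda_k=1$ and $\sum_k\lambda_k p_k = p$ — depends only on $p$ and on the node set $\{p_1,\dots,p_M\}$, and not at all on the scalar data $\mV$ or $\mU$. For $p\in\overline{\Delta(I)}$ this set is a nonempty compact polytope, since every such $p$ is a convex combination of the nodes of the simplex of the partition containing it, so the minimum is attained; consequently the two assertions reduce to comparisons of one and the same optimization problem under two affinely related objective functionals.

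For the monotonicity statement~i), I would fix $p\in\overline{\Delta(I)}$ and let $(\lambda_k)$ be an arbitrary admissible weight vector for the problem defining $\vexp[\mV](p)$. Since $\lambda_k\ge 0$ and $U(p_k)\le V(p_k)$ for every $k$, summing the inequalities $\lambda_k U(p_k)\le\lambda_k V(p_k)$ gives $\sum_k\lambda_k U(p_k)\le\sum_k\lambda_k V(p_k)$. The same $(\lambda_k)$ is admissible in the minimization defining $\vexp[\mU](p)$, so the left-hand side is bounded below by $\vexp[\mU](p)$; hence $\vexp[\mU](p)\le\sum_k\lambda_k V(p_k)$ for every admissible $(\lambda_k)$, and taking the minimum over all of them yields $\vexp[\mU](p)\le\vexp[\mV](p)$.

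For the constant-preservation statement~ii), I would exploit the normalization $\sum_k\lambda_k=1$: for any admissible $(\lambda_k)$ and any $\theta\in\RR$,
\begin{equation*}
\sum_{k}\lambda_k\big(V(p_k)+\theta\big)=\sum_k\lambda_k V(p_k)+\theta\sum_k\lambda_k=\sum_k\lambda_k V(p_k)+\theta .
\end{equation*}
Thus over the common feasible set the objective of the problem defining $\vexp[\mV+\theta](p)$ is that of $\vexp[\mV](p)$ shifted by the constant $\theta$, so the two minima differ by exactly $\theta$. The argument is entirely elementary and I do not expect any genuine obstacle; the only two points deserving a word of care are that the feasible set is independent of the data — so that a representation of $p$ admissible for one problem is admissible for the other — and that $\theta$ may be factored out of the sum precisely because the weights add up to one.
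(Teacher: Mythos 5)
Your proposal is correct: both properties do follow immediately from the variational characterization \eqref{eq:vex0}, since the feasible set of convex weights representing $p$ depends only on $p$ and the node set, so (i) is a termwise comparison of objectives over a common feasible region and (ii) is the observation that the objective shifts by the constant $\theta$ because $\sum_k\lambda_k=1$. Your route differs, though, from the proof given in the paper's appendix: there the authors work with the piecewise-linear representation of the discrete lower convex envelope over data-dependent simplices (writing $\vexp[\mU](p)=\sum_i\lambda^i(p)U(\pi^i)$ and $\vexp[\mV](p)=\sum_i\mu^i(p)V(\omega^i)$ on simplices $K$, $K'$ containing $p$, in the spirit of the Carnicer--Dahmen interpolation result used elsewhere in the paper), and they obtain monotonicity by combining the convexity of the function $\vexp[\mU]$ with the chain $\vexp[\mU]\leq\mU\leq\mV$ evaluated at the vertices $\omega^i$, while constant preservation again uses $\sum_i\mu^i(p)=1$. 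Your argument is more elementary: it needs neither the existence of the convexity-preserving triangulation nor the convexity of the envelope, only the linear-programming definition itself, which matches the paper's remark that the lemma is ``a simple consequence of its definition \eqref{eq:vex0}''; the paper's version, in exchange, stays within the interpolation framework that is used throughout the analysis (and also yields the nonexpansivity property stated in the appendix). One small point worth noting: as printed, \eqref{eq:vex0} sums over $k=1,\dots,I$ rather than over all $M$ nodes (a Carath\'eodory-type reduction or a typo); your reading of the minimization as being over convex combinations drawn from the full node set is the intended one, and your argument is insensitive to this distinction since in either formulation the feasible set is independent of the data values.
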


	\subsection{Lipschitz continuity in $p$}\label{subsec:plip}
	
	\begin{lem}\label{lem:plip0}
		There exists a constant $C>0$ (which only depends on \cref{A1,A2,A3,A4} such that for $n=0,\dots,N$ and all $x\in\RR^d$
		the numerical solution is Lipschitz continuous in the variable $p$, i.e., 
		\begin{equation}\label{eq:plip017}
		\lvert V^{\hp}_n(x,\;p) - V^{\hp}_n(x,\;q)\rvert\leq C\lvert p-q\rvert \qquad \forall p,\;q\in\Delta(I).
		\end{equation}
	\end{lem}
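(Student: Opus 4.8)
The plan is a backward induction on the time index $n$, tracking the Lipschitz modulus
\[
L_n := \sup_{x\in\RR^d}\ \sup_{\substack{p,q\in\Delta(I)\\ p\neq q}}\frac{|V^{\hp}_n(x,p)-V^{\hp}_n(x,q)|}{|p-q|},
\]
and proving a recursion of the additive form $L_n\le L_{n+1}+C\hht$, which telescopes from $n=N$ down to $n$ and yields a bound depending only on \cref{A1,A2,A3,A4} (in particular independent of $\hht$ and $\hp$). The base case is immediate: the terminal nodal values $V^m_N(x)=\la p_m,g(x)\ra$ are the restriction to $\mathcal N^\hp$ of the affine, hence convex, map $p\mapsto\la p,g(x)\ra$, so by \eqref{eq:vex0}--\eqref{eq:pinter} one has $V^\hp_N(x,p)=\la p,g(x)\ra$ and $L_N\le\|g\|_{\infty}$.

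For the inductive step, assume $V^\hp_{n+1}(x,\cdot)$ is $L_{n+1}$-Lipschitz uniformly in $x$. By construction, see \eqref{eq:algo5} and \eqref{vtilde}, $V^\hp_n(x,\cdot)$ is the lower convex envelope over $\overline{\Delta(I)}$ of a piecewise-linear function interpolating the data $\{(p_m,\oY^m_n(x))\}_{m=1}^M$. Since the lower convex envelope over the (convex) simplex does not increase the $\ell^1$-Lipschitz modulus — a property of the representation \eqref{eq:vex0}, consistent with the monotonicity and constant preservation of \cref{prop:monotonicvexm}, cf.\ \cite{carnicer1992convexity} — it suffices to bound the Lipschitz modulus of the nodal data $m\mapsto\oY^m_n(x)$. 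Writing $\oY^m_n(x)=\EE[V^\hp_{n+1}(\oX^{x}_{n+1},p_m)]+\hht\,H(\tn,x,\oZ^m_n(x),p_m)$, the expectation term is $L_{n+1}$-Lipschitz in $p_m$ (pointwise in the realisation of $\oX^{x}_{n+1}$, then take expectation), while by \eqref{eq:4} the Hamiltonian term contributes $C\hht|p_m-p_{m'}|$ from the explicit $p$-slot plus $C\hht\,|\oZ^m_n(x)-\oZ^{m'}_n(x)|$ from the $z$-slot.

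The main obstacle is this last term. The crude estimate via $\|V^\hp_{n+1}(\cdot,p_m)-V^\hp_{n+1}(\cdot,p_{m'})\|_\infty\le L_{n+1}|p_m-p_{m'}|$ only yields a bound of order $\hht^{-1/2}|p_m-p_{m'}|$, which blows up. The remedy is to exploit $\EE[\xi_n]=0$ to recentre: with $W:=V^\hp_{n+1}(\cdot,p_m)-V^\hp_{n+1}(\cdot,p_{m'})$,
\[
\oZ^m_n(x)-\oZ^{m'}_n(x)=\tfrac{1}{\sqrt{\hht}}\,\signTlx\,\EE\big[\big(W(\oX^{x}_{n+1})-W(x)\big)\,\xi_n\big],
\]
and then to invoke a joint Lipschitz bound $|W(x)-W(x')|\le C|p_m-p_{m'}|\,|x-x'|$. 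Because $|\oX^{x}_{n+1}-x|\le C\sqrt{\hht}$ by \eqref{xeuler1}, this produces $|\oZ^m_n(x)-\oZ^{m'}_n(x)|\le C|p_m-p_{m'}|$ with $C$ depending only on \cref{A1,A2,A3,A4}, and the spurious $\hht^{-1/2}$ disappears. Such a mixed $(x,p)$-Lipschitz estimate holds for the terminal data $\la p,g(x)\ra$ and is stable under the algorithm's recursion, so it is carried as a second inductive hypothesis (alternatively, it is supplied by the companion regularity estimate in the $x$-variable). Collecting the three contributions gives $L_n\le L_{n+1}+C\hht$; iterating down from $L_N\le\|g\|_{\infty}$ yields $L_n\le\|g\|_{\infty}+CT$, which is \eqref{eq:plip017}. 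I expect the genuinely delicate points to be (i) the $\oZ$-term treated above, and (ii) checking that passing from the nodal Lipschitz bound to the bound for the convex envelope on all of $\overline{\Delta(I)}$ — including near $\partial\Delta(I)$ — does not introduce a mesh-dependent factor, which is where the structure of the convexity-preserving interpolation of \cite{carnicer1992convexity} is used.
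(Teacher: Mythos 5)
Your overall skeleton (backward induction, base case, one-step estimate plus an envelope step) matches the paper, and you correctly identify the two delicate points; however, your resolutions of both are where the proposal breaks down. The central gap is the treatment of the $\oZ$-difference. The recentring via $\EE[\xi_n]=0$ is fine, but the bound you then invoke, $\lvert W(x)-W(x')\rvert\le C\lvert p_m-p_{m'}\rvert\,\lvert x-x'\rvert$ for $W=V^\hp_{n+1}(\cdot,p_m)-V^\hp_{n+1}(\cdot,p_{m'})$, is a mixed second-order ($\partial^2_{xp}$-type) estimate that is much stronger than anything the scheme is known to satisfy. It is \emph{not} ``supplied by the companion regularity estimate in the $x$-variable'': \cref{lem:xlip} gives only $\lvert W(x)-W(x')\rvert\le 2L\lvert x-x'\rvert$ uniformly in $p$, with no gain in $\lvert p_m-p_{m'}\rvert$. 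Carrying it as a ``second inductive hypothesis'' is not innocent either: you would have to show it survives the convexification step \eqref{eq:algo5}, i.e.\ that $x\mapsto\vexp[\mY_n(x)](\cdot)$ preserves mixed increments, which is doubtful since the active nodes and the induced triangulation of the discrete envelope change with $x$, and differences of envelopes at nearby $x$ are controlled only through barycentric weights whose $p$-gradients are of order $1/\hp$. The paper needs no such bound: the inductive step is reduced (via \eqref{eq:plip19}) to the one-step Lipschitz-in-$p$ estimate of \cite[Lemma~3.6]{gruen2012aprobabilistic}, whose mechanism is different. There the expectation term and the $\la b,\cdot\ra$-part of the Hamiltonian difference are merged into a single expectation of the form $\EE\big[\big(V^\hp_{n+1}(\oX^x_{n+1},p)-V^\hp_{n+1}(\oX^x_{n+1},q)\big)\big(1+\sqrt{\hht}\,\la\sigma_n^{-1}(x)b,\xi_n\ra\big)\big]$, and for $\hht$ small the random weight is nonnegative with mean one (the same device as in \cref{lem:monotonicity}); this term is then bounded by $L_{n+1}\lvert p-q\rvert$ outright, the explicit $p$-dependence of $H$ contributes $C\hht\lvert p-q\rvert$, and one obtains $L_n\le L_{n+1}(1+C\hht)+C\hht$ with neither a $\hht^{-1/2}$ loss nor any mixed regularity.

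The second gap is your envelope step. The claim that the discrete lower convex envelope \eqref{eq:vex0} ``does not increase the $\ell^1$-Lipschitz modulus'' of the nodal data is not a consequence of \cref{prop:monotonicvexm} (monotonicity and constant preservation say nothing about Lipschitz moduli), and for $I\ge3$ it is not a safe statement: the Lipschitz constant of the discrete envelope is not controlled by nodal divided differences alone, since thin cells of the induced triangulation can create large transverse gradients (only the envelope of a globally defined Lipschitz function is automatically harmless, and the discrete envelope sees only nodal values); even a shape-regularity-dependent factor $\kappa>1$ per time step would ruin the telescoping. You flag this yourself as point (ii) but give no argument. The paper is structured precisely to avoid it: convexity of $V^\hp_n(x,\cdot)$ together with the Laraki decomposition $q=\sum_i\omega^i_{n,x}\be^i_{n,x}(p)$ with $\lvert p-q\rvert=\sum_i\lvert\pi^i_{n,x}(p)-\omega^i_{n,x}\rvert\be^i_{n,x}(p)$ (\cite[Lemma~8.2]{laraki2004on}, \eqref{eq:plip27}), and the fact that on the Carnicer--Dahmen partition the envelope is affine on $K_p$ and coincides with the one-step values at the vertices $\pi^i_{n,x}$, reduce everything to the one-step estimate between $\omega^i_{n,x}$ and $\pi^i_{n,x}$, after which the discrete Gronwall lemma closes the induction. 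Until you either replace the mixed $(x,p)$ hypothesis by an argument of the weight type above and supply a proof for the envelope step, the proposal does not constitute a proof of \eqref{eq:plip017}.
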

	\begin{proof}
		For $n = N$, 
		by linearity the function $V^\hp_N(x,\;p):=\la p,\;g(x)\ra$ is Lipschitz continuous in $p$ for any $x\in\RR^d$
		with a Lipschitz constant $l_\tau$ that only depends on $g$.
		
		We proceed by induction and assume that $V^{\hp}_{n + 1}(x,\;p)$ is Lipschitz continuous in $p$ with a Lipschitz constant $L_{n + 1}$ for some $n\leq N-2$. 
		We consider $p,\;q$ $\in\Delta(I)$ and assume without loss of generality  that $V^\hp_n(x,\;q)-V^\hp_n(x,\;p)\geq 0$, otherwise $p$ and $q$ can be commuted.
		
		Let $p \in \overline{K}_p$, where $K_p = [\pi_{n,x}^1(p),\ldots,\pi_{n,x}^I(p)] \subset \mathcal{M}_{n,x}^h $ is the simplex given by the nodes
		$\pi_{n,x}^1(p),\ldots,\pi_{n,x}^I(p)\in\mathcal{N}^\hp_{n,x}$. Hence,
		$p=\sum_{i=1}^{I}\pi^i_{n,x}(p)\be^i_{n,x}(p)$, where $\psi_{n,x}^i(p)$, $i=1,\ldots,I$ are the linear Lagrange basis functions on $[\pi_{n,x}^1(p),\ldots,\pi_{n,x}^I(p)]$. 
		
		We note that $\sum_{i=1}^{I}\be^i_{n,x}(p) = 1$ and $\be^i_{n,x}(p)\geq 0 $ for $i=1,\ldots,I$. 
		Hence, by \cite[Lemma\,8.2.]{laraki2004on}, there exist vectors $\big\{\omega^1_{n,x},\ldots,\omega^I_{n,x}\big\}\in\Delta(I)$ 
		(the vectors depend on $p$, $q$, $\mathcal{M}^\hp_{n,x}$ and are not necessarily in $\mathcal{N}^\hp$) such that
		$q=\sum_{i=1}^{I}\omega^i_{n,x}(q)\be^i_{n,x}(p)$ and
		\begin{equation}\label{eq:plip27}
		\lvert p-q\rvert = \sum_{i=1}^{I}\lvert \pi^i_{n,x}(p)-\omega^i_{n,x}\rvert\be_{n,x}^{i}(p).
		\end{equation}
		
		By the convexity of $V^\hp_n$, since $q=\sum_{i=1}^{I}\omega^i_{n,x}\be^i_{n,x}(p)$ it directly follows that
		$$
		V^\hp_n(x,\;q) \leq \sum_{i=1}^{I} V^\hp_n(x,\;\omega_{n,x}^i)\psi_{n,x}^{i}(p)\,.
		$$
		Using the above inequality, (\ref{eq:plip27}), the representation $ V^\hp_n(x,p)|_{K_p} = \sum_{i=1}^{I} V^\hp_n(x,\;\pi_{n,x}^i)\be_{n,x}^{i}(p)$ and
		$\vexp[f]\leq f$ we get on recalling (\ref{eq:algo4})
		that
		\begin{align}\notag
		0&\leq V^\hp_n(x,\;q)-V^\hp_n(x,\;p)\leq \sum_{i=1}^{I} \big(V^\hp_n(x,\;\omega_{n,x}^i)-V^\hp_n(x,\;\pi_{n,x}^i)\big)\be_{n,x}^{i}(p)
		\\
		\label{eq:plip19}
		\begin{split}
		&\leq \sum_{i=1}^{I}\Big(\EE\big[V^{\hp}_{n + 1}(\oX^{x}_{n+1},\;\omega_{n,x}^i)\big] + \hht H\big(\tn,\;x,\;Z^{\hp}_{n}(x,\;\omega_{n,x}^i),\;\omega_{n,x}^i\big)
		\\
		&\hspace{25pt}-\EE\big[V^{\hp}_{n + 1}(\oX^{x}_{n+1},\;\pi_{n,x}^i)\big] - \hht H\big(\tn,\;x,\;Z^{\hp}_{n}(x,\;\pi_{n,x}^i),\;\pi_{n,x}^i\big)\Big)\be_{n,x}^{i}(p)\, ,
		\end{split}
		\end{align}
		where we used that $V^{\hp}_{n}(x,\;\pi_{n,x}^i) = \vexp[\mathcal{Y}_{n}(x)](\pi_{n,x}^i)$, $i=1,\dots,I$.
		
		By the Lipschitz continuity of $V^{\hp}_{n + 1}$, it follows from \eqref{eq:plip19} using \eqref{eq:plip27} and \cite[Lemma\,3.6]{gruen2012aprobabilistic} that
		\begin{align}
		\label{eq:plip10}
		|V^\hp_n(x,\;q)-V^\hp_n(x,\;p)|\leq L_{n+1}\sum_{i=1}^{I}\lvert \pi_{n,x}^i-\omega_{n,x}^i\rvert\be_{n,x}^{i}(p)= L_{n+1}\lvert p-q\rvert,
		\end{align}
		where $l_\tau=L_{n+1}(1+ C\hht) + C\hht$.
		
		Recursively, we get that $l_\tau = l_\tau + Ct_n + C\hht\sum_{i=n+1}^{N}l_\tau$, $n =1,\ldots, N-1$ and
		by the discrete Gronwall lemma it follows $l_\tau \leq L_0\leq (l_\tau+CT)\exp(CT)$. 
		Hence $V^\hp_n$ is uniformly Lipschitz continuous in $p$ with a Lispchitz constant $L_0$ which only depends on the \cref{A1,A2,A3,A4}.
	\end{proof}
	
	\subsection{Lipschitz continuity in $x$}\label{subsec:xlip}
	The next lemma  can be shown analogically to \cite[Lemma~3.3]{gruen2012aprobabilistic}.
	\begin{lem}\label{lem:xlip0} Let $\phi:\RR^d\rightarrow\RR$ be a uniformly Lipschitz continuous function with a Lispschitz constant $L$. Then, there exists a constant $C>0$, depending only on the data of \cref{A1,A2,A3,A4}, such that the following inequality holds for $n=0,\dots,N-1$
		\begin{equation*}
		\begin{split}
		\Big\lvert\EE 
		&\big[ \phi(\oX_{n+1}^{x})\big] +\hht H\Big(\tn,\;x,\;\frac{1}{\hht}\EE \big[\phi(\oX_{n+1}^{x})\signTlx\xi_{n}\sqrt{\hht}\big],\;p\Big)
		\\
		&\hspace{10pt}-\EE\big[\phi(\oX_{n+1}^{x'})\big] -\hht H\Big(\tn,\;x',\;\frac{1}{\hht}\EE \big[\phi(\oX_{n+1}^{x'})\signTlxp\xi_{n}\sqrt{\hht}\big],\;p\Big)\Big\rvert\leq C_{\hht,L}\llvert x-x' \rrvert ,
		\end{split}
		\end{equation*}
		where $C_{\hht,L}:= L(1 + C\hht) + C\hht$.
	\end{lem}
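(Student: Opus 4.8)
As noted, this is the one-step $x$-Lipschitz estimate behind the later proof that the numerical solution is Lipschitz in $x$, and it can be obtained by adapting \cite[Lemma~3.3]{gruen2012aprobabilistic}. The plan is to undo, at the level of a single Euler step, the Girsanov transform that led from \eqref{eq:state} to \eqref{eq:state1}. Write $\oX^{x}_{n+1}=x+\sigma_n(x)\xi_n\sqrt{\hht}$ and $Z(x):=\tfrac1{\hht}\EE[\phi(\oX^{x}_{n+1})\signTlx\xi_n\sqrt{\hht}]$, and for $(u,v)\in\UU\times\VV$ introduce the discrete Girsanov weight and the associated one-step functional
\begin{gather*}
\Lambda^{x}_{u,v}:=1+\sqrt{\hht}\,\la\sigma_n^{-1}(x)\,b(\tn,x,u,v),\,\xi_n\ra ,\\
\mathcal{J}_{u,v}(x):=\EE\big[\phi(\oX^{x}_{n+1})\,\Lambda^{x}_{u,v}\big]+\hht\sum_{i=1}^{I}p_i\,\ell_i(\tn,x,u,v).
\end{gather*}
Since $\la b,\signTlx\xi_n\ra=\la\sigma_n^{-1}(x)b,\xi_n\ra$, the definition of $Z(x)$ gives the identity $\EE[\phi(\oX^{x}_{n+1})]+\hht\,\la b(\tn,x,u,v),Z(x)\ra=\EE[\phi(\oX^{x}_{n+1})\Lambda^{x}_{u,v}]$; adding the $(u,v)$-independent term $\EE[\phi(\oX^{x}_{n+1})]$ to the Isaacs representation of $H$ in \cref{A4} then yields $\EE[\phi(\oX^{x}_{n+1})]+\hht\,H(\tn,x,Z(x),p)=\inf_{u\in\UU}\sup_{v\in\VV}\mathcal{J}_{u,v}(x)$, and likewise at $x'$. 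By $|\inf_u\sup_v f-\inf_u\sup_v g|\le\sup_{u,v}|f-g|$ it then suffices to bound $|\mathcal{J}_{u,v}(x)-\mathcal{J}_{u,v}(x')|$ uniformly in $(u,v)$.

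I would split $\mathcal{J}_{u,v}(x)-\mathcal{J}_{u,v}(x')$ into the running-cost difference, which is $\le C\hht|x-x'|$ by \cref{A3} and $\sum_{i}p_i=1$, and the difference of the two $\phi$-expectations, written as $\EE[(\phi(\oX^{x}_{n+1})-\phi(\oX^{x'}_{n+1}))\Lambda^{x}_{u,v}]+\EE[\phi(\oX^{x'}_{n+1})(\Lambda^{x}_{u,v}-\Lambda^{x'}_{u,v})]$. For the first term, Cauchy--Schwarz together with $\EE|\oX^{x}_{n+1}-\oX^{x'}_{n+1}|^2\le|x-x'|^2(1+C\hht)$ and $\EE|\Lambda^{x}_{u,v}|^2\le1+C\hht$ gives $\le L(1+C\hht)|x-x'|$; in both variance bounds the term linear in $\xi_n$ drops out because $\EE[\xi_n]=0$, and the $O(\hht)$ remainders stem from the boundedness and Lipschitz assumptions on $\sigma,\,b$ and $\sigma^{-1}$ in \cref{A1,A2}. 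For the second term, $\EE[\Lambda^{x}_{u,v}-\Lambda^{x'}_{u,v}]=0$ lets one subtract $\phi(x')$ from $\phi(\oX^{x'}_{n+1})$; then $|\phi(\oX^{x'}_{n+1})-\phi(x')|\le CL\sqrt{\hht}\,|\xi_n|$ and $|\Lambda^{x}_{u,v}-\Lambda^{x'}_{u,v}|\le C|x-x'|\sqrt{\hht}\,|\xi_n|$ (Lipschitz continuity of $x\mapsto\sigma_n^{-1}(x)b(\tn,x,u,v)$, uniform in $(u,v)$), so this term is $\le CL\hht|x-x'|$. Summing the three bounds gives $|\mathcal{J}_{u,v}(x)-\mathcal{J}_{u,v}(x')|\le L(1+C\hht)|x-x'|+C\hht|x-x'|$, which is the claim with $C_{\hht,L}=L(1+C\hht)+C\hht$.

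The step I expect to be the main obstacle --- and the reason for the Girsanov reformulation above --- is the passage from $O(\sqrt{\hht})$ to $O(\hht)$. One must \emph{not} estimate $\EE[\phi(\oX^{x}_{n+1})]$ and $\hht\,H(\tn,x,Z(x),p)$ separately: the discrete gradient $Z(x)$ is only bounded (by $CL$), and its modulus of continuity in $x$ genuinely degenerates like $\hht^{-1/2}$ near a kink of $\phi$, so a direct use of \eqref{eq:4} would only give $\hht\,C|Z(x)-Z(x')|=O(\hht^{1/2})|x-x'|$, which is too weak to survive the discrete Gronwall argument that later propagates the constant over the $N=T/\hht$ time steps. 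Re-attaching the $\la b,Z(x)\ra$-term to the expectation recasts the whole quantity as a comparison of $\phi$ at two nearby drifted one-step Euler positions, and it is exactly the centredness $\EE[\xi_n]=0$ that upgrades the would-be $O(\hht^{1/2})$ cross terms to $O(\hht)$; carrying that cancellation out carefully, and checking the $L^2$-type bounds above for the binomial random walk $\xi_n$, is where the work lies.
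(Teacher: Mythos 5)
For this lemma the paper itself offers no argument at all: it simply states that the result ``can be shown analogically to \cite[Lemma~3.3]{gruen2012aprobabilistic}''. Your reconstruction is correct and is, in substance, exactly the adaptation that the citation has in mind: you undo the Girsanov transform at the level of one Euler step by writing $\EE[\phi(\oX^x_{n+1})]+\hht H(\tn,x,Z(x),p)=\inf_{u}\sup_{v}\big\{\EE[\phi(\oX^x_{n+1})\Lambda^x_{u,v}]+\hht\sum_i p_i\ell_i\big\}$ via the Isaacs representation in \ref{A4}, reduce to a bound on $\mathcal{J}_{u,v}(x)-\mathcal{J}_{u,v}(x')$ uniform in $(u,v)$, and use the centering $\EE[\xi_n]=0$ twice (in $\EE|\oX^x_{n+1}-\oX^{x'}_{n+1}|^2\le(1+C\hht)|x-x'|^2$ and $\EE[\Lambda^x_{u,v}-\Lambda^{x'}_{u,v}]=0$) to upgrade the would-be $O(\hht^{1/2})$ corrections to $O(\hht)$; your diagnosis that estimating $\hht\,H$ separately through \eqref{eq:4} only yields an $O(\hht^{1/2})|x-x'|$ term, which the discrete Gronwall argument in Lemma~\ref{lem:xlip} cannot absorb, is precisely the point of the combined treatment. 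All three partial bounds check out and sum to $C_{\hht,L}=L(1+C\hht)+C\hht$. One small bookkeeping remark: the exact factor $(1+C\hht)$ in $\EE|\oX^x_{n+1}-\oX^{x'}_{n+1}|^2$ comes from the inner-product expansion, so your argument should be run with the Euclidean norm (for the Lipschitz constant of $\phi$ and for $|x-x'|$) rather than the paper's default $\ell^1$ convention; since the same norm is propagated through the induction in Lemma~\ref{lem:xlip} and all norms are equivalent, this is harmless, but it is worth stating explicitly so the leading constant stays exactly $L$ at every step.
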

	
	{
		\begin{lem}[Lipschitz continuity in $x$]\label{lem:xlip}
			For $n=0,\dots,N$ the interpolant $V_n^\hp$ is 
			\begin{assertion}
				\item Lipschitz continuous in $x$:
				\begin{equation*}
				\lvert V_n^\hp(x,\;p)-V_n^\hp(x',\;p)\rvert\leq C\lvert x-x'\rvert \quad\mbox{ for all }\quad x,x'\in\RR^d, \,\,p\in\Delta(I)\,,
				\end{equation*}
				\item uniformly bounded:
				\begin{equation*}
				\lvert V_n^\hp(x,\;p)\rvert\leq C\quad\mbox{ for all }\quad x\in\RR^d,\,\,p\in\Delta(I),
				\end{equation*}
			\end{assertion}
			where $C>0$ is a constant which depends only on \cref{A1,A2,A3,A4}.
		\end{lem}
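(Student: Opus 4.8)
The plan is to argue by backward induction on $n$, running from $n=N$ down to $n=0$, in the same spirit as the proof of \cref{lem:plip0}: the one-step estimate \cref{lem:xlip0} supplies the increment bound for the forward/backward step \eqref{eq:algo3}--\eqref{eq:algo4}, and the monotonicity and constant-preservation of the discrete lower convex envelope (\cref{prop:monotonicvexm}) propagate this bound through the convexification step \eqref{eq:algo5}. For $n=N$ we have $V_N^\hp(x,p)=\la p,\;g(x)\ra$; since $p\in\Delta(I)$ satisfies $p_i\geq 0$ and $\sum_i p_i=1$, \cref{A3} immediately gives $|V_N^\hp(x,p)|\leq\|g\|_\infty$ and $|V_N^\hp(x,p)-V_N^\hp(x',p)|\leq C|x-x'|$ with $C$ the Lipschitz constant of $g$, so both (i) and (ii) hold at $n=N$.

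\emph{Inductive step, item (i).} Assume $V_{n+1}^\hp(\cdot,p)$ is Lipschitz in $x$ with a constant $L_{n+1}$ independent of $p\in\Delta(I)$. Since $V_{n+1}^m=V_{n+1}^\hp(\cdot,p_m)$, the quantity $\oY^{m}_{n}(x)$ in \eqref{eq:algo4}--\eqref{eq:algo3} is precisely the expression treated in \cref{lem:xlip0} with $\phi=V_{n+1}^\hp(\cdot,p_m)$ and $p=p_m$, whence
\[
|\oY^{m}_{n}(x)-\oY^{m}_{n}(x')|\leq\delta:=(L_{n+1}(1+C\hht)+C\hht)\,|x-x'|,\qquad m=1,\dots,M .
\]
Thus $\oY^{m}_{n}(x)\leq\oY^{m}_{n}(x')+\delta$ for every $m$, and applying monotonicity and constant-preservation of $\vexp$ (\cref{prop:monotonicvexm}) to \eqref{eq:algo5} yields $V_n^m(x)\leq V_n^m(x')+\delta$; by symmetry $|V_n^m(x)-V_n^m(x')|\leq\delta$ for all $m$. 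To reach an arbitrary $p$ while avoiding the $x$-dependence of the convexity-preserving partition $\mathcal{M}_{n,x}^h$, note that the fixed-mesh interpolant \eqref{eq:pinter2} is a convex combination of the nodal values $V_n^m(x)$, so $|\widetilde{V}_n^\hp(x,p)-\widetilde{V}_n^\hp(x',p)|\leq\delta$ for every $p$; then \eqref{vtilde} together with \cref{prop:monotonicvexm} gives $|V_n^\hp(x,p)-V_n^\hp(x',p)|\leq\delta$. Hence $L_n\leq L_{n+1}(1+C\hht)+C\hht$, and the discrete Gronwall lemma (exactly as in \cref{lem:plip0}) produces a bound $L_n\leq L_0\leq C$ uniform in $n$, depending only on \cref{A1,A2,A3,A4}.

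\emph{Inductive step, item (ii).} Assume $\|V_{n+1}^\hp\|_\infty\leq B_{n+1}$. Then $|\EE[V_{n+1}^m(\oX^{x}_{n+1})]|\leq B_{n+1}$. Because the increments $\xi_n$ are centered, we may subtract $V_{n+1}^m(x)$ and write $\oZ^{m}_{n}(x)=\tfrac1\hht\EE[(V_{n+1}^m(\oX^{x}_{n+1})-V_{n+1}^m(x))\,\signTlx\xi_{n}\sqrt{\hht}]$; estimating the increment by the now-uniform Lipschitz constant, $|V_{n+1}^m(\oX^{x}_{n+1})-V_{n+1}^m(x)|\leq L_0\,|\sigma_n(x)\xi_n|\sqrt{\hht}$, and using the boundedness of $\sigma$ and $\sigma^{-T}$ from \cref{A2}, we obtain $|\oZ^{m}_{n}(x)|\leq C$. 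Then \eqref{eq:3} gives $|H(\tn,x,\oZ^{m}_{n}(x),p_m)|\leq C$, so $|\oY^{m}_{n}(x)|\leq B_{n+1}+C\hht$. Since $\vexp$ of a finite data set always lies between the minimum and the maximum of that data, $|V_n^m(x)|\leq\max_k|\oY^{k}_{n}(x)|$ and, via \eqref{vtilde}, $|V_n^\hp(x,p)|\leq\max_k|V_n^k(x)|$; hence $B_n\leq B_{n+1}+C\hht$, and the discrete Gronwall lemma gives $B_n\leq B_0\leq C$ uniform in $n$.

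\emph{Main obstacle.} The one-step increment and growth estimates are provided by \cref{lem:xlip0} and \eqref{eq:3}, so the real work is (a) carrying these estimates through the convexification \eqref{eq:algo5}, which is where \cref{prop:monotonicvexm} and the sandwiching of $\vexp$ between the minimum and maximum of its data are essential, and (b) coping with the fact that the convexity-preserving simplicial partition $\mathcal{M}_{n,x}^h$ depends on $x$ — resolved by doing every estimate first at the fixed nodes $\mathcal{N}^h$ and then invoking the representation \eqref{vtilde} of $V_n^\hp$ as $\vexp$ of the fixed-mesh interpolant \eqref{eq:pinter2}. A secondary point to keep track of is that the bound on $\oZ^{m}_{n}(x)$ used for (ii) requires the Lipschitz-in-$x$ estimate of $V_{n+1}^\hp$, so item (i) must be in hand before item (ii).
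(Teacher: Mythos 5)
Your argument is correct and follows essentially the same route as the paper: backward induction with the one-step estimate of \cref{lem:xlip0} at the nodes $\mathcal{N}^h$, propagation through the convexification via \cref{prop:monotonicvexm} and the representation \eqref{vtilde} with the fixed-mesh interpolant \eqref{eq:pinter2}, a discrete Gronwall argument, and a bound on $\oZ^m_n$ obtained from the centering of $\xi_n$ together with the Lipschitz continuity of $V^\hp_{n+1}$ (the paper phrases this via the generalized mean value theorem, which is only a cosmetic difference), followed by \eqref{eq:3} and the fact that the discrete convex envelope stays between the extremes of its data.
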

		From the subsequent proof it follows that the non-convex interpolant $\widetilde{V}^\hp_n$ defined in (\ref{eq:pinter2})
		enjoys the same boundedness and Lipschitz continuity properties as $V_n^\hp$.
		\begin{proof}
			We fix $p\in\Delta(I)$ and consider $x,x'\in\RR^d$. For $n=N$ we have $\widetilde{V}_N^{\hp}(x,\;p) = V_N^{\hp}(x,\;p) =\la p,\;g(x)\ra $ and  $(i)$, $(ii)$ 
			hold since
			\begin{align*}
			\big\lvert V_N^{\hp}(x,\;p)-V_N^{\hp}(x',\;p)\big\rvert &= \big\lvert \la p,\;g(x)-g(x')\ra\big\rvert\leq L_{N}\lvert x-x'\rvert,
			\\
			\big\lvert V_N^{\hp}(x,\;p)\big\rvert &= \big\lvert \la p,\;g(x)\ra\big\rvert\leq C_{N},
			\end{align*}
			where $l_\tau$ and $C_N$ are positive constant which depend only on $g$.
			
			We proceed by induction. We assume that $V^{\hp}_{n + 1}(x,\;p)$, $\widetilde{V}^{\hp}_{n + 1}(x,\;p)$ are Lipschitz continuous in $x$ with a Lipschitz constant $L_{n + 1}$ and bounded by $C_{n+1}$.
			We show that $V^{\hp}_{n}(x,\;p)$, $\widetilde{V}^{\hp}_{n}(x,\;p)$ are Lipschitz continuous with a Lipschitz constant $l_\tau$ and bounded by a constant $C_{n}$. 
			On recalling (\ref{eq:algo5}), (\ref{vtilde}) we may write
			\begin{align}\notag
			V^{\hp}_{n}(x,p) &= \vexp\big[ \widetilde{V}^{\hp}_n(x,\cdot)\big](p) 
			\\
			\label{eq:xlip21}
			&= \vexp\bigg[ \EE\big[\widetilde{V}^{\hp}_{n + 1}(\oX^{x}_{n+1},\cdot)\big] - \hht H\big(\tn,\;x,\;\widetilde{Z}^{\hp}_{n}(x,\cdot),\cdot\big)\bigg](p),
			\end{align}
			where $\widetilde{Z}^{\hp}_n(x,\;p) := \frac{1}{{\hht}}\EE\big[\widetilde{V}^{\hp}_{n + 1}(\oX^{x}_{n + 1},\;p)\signTlx\xi_{n}\sqrt{\hht}\big]$.
			Moreover, we recall that for $p_m\in\mN^\hp$ it holds by definition
			\begin{equation}\label{vexp_vtilde}
			\begin{split}
			V^{\hp}_n(x,p_m) = \vexp\bigg[ \EE\big[\widetilde{V}^{\hp}_{n + 1}(\oX^{x}_{n+1},\cdot)\big] - \hht H\big(\tn,\;x,\;\widetilde{Z}^{\hp}_{n}(x,\cdot),\cdot\big)\bigg](p_m)\,.
			\end{split}
			\end{equation}
			
			\textit{i) Lipschitz continuity}. By \cref{lem:xlip0} we have for $p_m\in \mN^\hp$
			\begin{align}\label{eq:xlip23}
			\begin{split}
			&\EE\big[\widetilde{V}^{\hp}_{n + 1}(\oX^{x}_{n+1},p_m)\big] + \hht H\big(\tn,\;x,\;\widetilde{Z}^{\hp}_{n}(x,p_m),p_m\big)
			\\
			&\hspace{50pt}\leq\EE\big[\widetilde{V}^{\hp}_{n + 1}(\oX^{x'}_{n+1},p_m)\big] - \hht H\big(\tn,\;x',\;\widetilde{Z}^{\hp}_{n}(x',p_m),p_m\big) + L_{n+1}\lvert x-x'\rvert,
			\end{split}
			\end{align}
			with $L_{n+1}:=l_\tau(1 + C\tau) + C\tau$. 
			On noting \eqref{vexp_vtilde} it follows from
			\eqref{eq:xlip23} by \cref{prop:monotonicvexm} that
			\begin{align}\label{eq:xlip28}
			\widetilde{V}^\hp_n(x,p_m)-\widetilde{V}^\hp_n(x',p_m)\leq L_{n+1}\lvert x-x'\rvert
			\end{align}
			We recall (\ref{eq:pinter2}) and obtain from (\ref{eq:xlip28}) (note $\widetilde{V}^\hp_n(x,p_m)=V^{m}_n(x)$) that for any $p\in \Delta(I)$ it holds
			\begin{align}\label{lip_vtilde}
			\widetilde{V}^{\hp}_{n}(x,p) - \widetilde{V}^{\hp}_{n}(x',p) \leq \sum_{m=1}^{M}\Big(V^{m}_n(x) - V^{m}_n(x')\Big)\be^{m}(p)
			\leq L_{n+1}\lvert x-x'\rvert\,,
			\end{align}
			where we used that $\sum_{m=1}^{M}\be^{m}\equiv 1$, $\be^{m} \geq 0$ for any $0\leq n\leq N$.
			Consequently by (\ref{eq:xlip21}) and \cref{prop:monotonicvexm} it also follows that
			\begin{align}\label{lip_v}
			{V}^{\hp}_{n}(x,p) - {V}^{\hp}_{n}(x',p) \leq L_{n+1}\lvert x-x'\rvert\,.
			\end{align}

			After commuting the role of $x$ and $x'$ and repeating the above steps we obtain for any $p\in\Delta(I)$
			\begin{align}\label{eq:xlip29}
			\lvert V^\hp_n(x,p)-V^\hp_n(x',p)\rvert\leq L_{n+1}\lvert x-x'\rvert.
			\end{align}
			
			{Hence, we get recursively that $l_\tau \leq l_\tau + Ct_n + C\hht\sum_{i=n+1}^{N}L_i$.} By the discrete Gronwall lemma, it follows that $l_\tau\leq (l_\tau+CT)\exp(CT)$. 
			Consequently, $V^\hp_n$, $\widetilde{V}^\hp_n$, $n=0,\dots,N$ are Lipschitz continuous in $x$, with a Lipschitz constant $L:=(l_\tau+CT)\exp(CT)$ depending only on the data in \cref{A1,A2,A3,A4}.

			\textit{ii) boundedness}. Let $K_p = [\pi_{n,\;x}^1(p),\ldots,\pi_{n,\;x}^I(p)]$ be a simplex of $\mM_{n,\;x}^h$ 
			such that $p\in \overline{K}_p$, i.e. $p=\sum_{i=1}^{I}\pi^i_{n,\;x}(p)\psi^i_{n,\;x}(p)$, where $\{\psi_{n,\;x}^i(p):i=1,\ldots,I\}$ is the Lagrange polynomial basis on $K_p$. 
			In particular, $\sum_{i=1}^{I}\psi^i_{n,\;x}(p) = 1$ and $\psi^i_{n,\;x}(p)\geq 0 $ for $i=1,\ldots,I$. On recalling \eqref{eq:pinter} we may write
			\begin{equation}\label{eq:xlip16}
			\begin{split}
			V^{\hp}_{n}(x,\;p)
			= \sum_{i=1}^{I} &\Big(\EE\big[V^{\hp}_{n + 1}(\oX^{x}_{n+1},\;\pi^i_{n,\;x}(p))\big] 
			\\
			&\hspace{20pt}+ \hht H\big(\tn,\;x,\;Z^{\hp}_{n}(x,\;\pi^i_{n,\;x}(p)),\;\pi^i_{n,\;x}(p)\big)\Big)\psi_{n,\;x}^{i}(p),
			\end{split}
			\end{equation}
			where $Z^{\hp}_n(x,\;p) := \frac{1}{{\hht}}\EE\big[V^{\hp}_{n + 1}(\oX^{x}_{n + 1},\;p)\signTlx\xi_{n}\sqrt{\hht}\big]$. 
			By \eqref{eq:3}, since $V_{n+1}^\hp$ is bounded by $C_{n+1}$ we estimate the right hand side of \eqref{eq:xlip16} 
			\begin{align}
			\label{eq:xlip26}
			\begin{split}
			&\big\lvert\EE\big[V^{\hp}_{n + 1}(\oX^{x}_{n+1},\;\pi^i_{n,\;x}(p))\big] 
			\\
			&\hspace{20pt}+ \hht H\big(\tn,\;x,\;Z^{\hp}_{n}(x,\;\pi^i_{n,\;x}(p)),\;\pi^i_{n,\;x}(p)\big)\big\rvert\leq C_{n+1} + \hht C\big(1 + \lvert Z^{\hp}_{n}(x,\;\pi^i_{n,x}(p))\rvert\big).
			\end{split} 
			\end{align}
			Next, we show that $Z^{\hp}_{n}(x,\;\pi^i_{n,x}(p))$ is bounded. Since $V_{n+1}^\hp$ is uniformly Lipschitz continuous in the  variable $x$,
			on recalling \eqref{xeuler1},
			by the generalized mean value theorem \cite[Theorem 2.3.7 ]{clarke1990optimization} there exists $\Theta\in\RR^d$ with $|\Theta|_{\infty}\leq C$ such that
			\begin{equation}\label{eq:xlip22}
			V_{n + 1}^\hp(\oX_{n + 1}^{x},\;\pi^i_{n,\;x}(p)) = V_{n + 1}^\hp(x,\;\pi^i_{n,x}(p)) + \la\Theta,\;\sig_{n}(x)\xi_{n}\sqrt{\hht}\ra.
			\end{equation}
			We multiply \eqref{eq:xlip22} with $(1/\hht)\signTlx\xi_{n}$ and take the expectation to get
			\begin{align}
			\notag
			\begin{split}
			Z^{\hp}_{n}(x,\;\pi^i_{n,\;x}(p))&= \frac{1}{\hht}\EE\big[V_{n + 1}^\hp(\oX_{n + 1}^{x},\;\pi^i_{n,\;x}(p))\signTlx\xi_{n}\sqrt{\hht}\big] 
			\\
			&= \frac{1}{\hht}\EE\big[V_{n + 1}^\hp(x,\;\pi^i_{n,x}(p))\signTlx\xi_{n}\sqrt{\hht} + \la\Theta,\;\sig_{n}(x)\xi_{n}\sqrt{\hht}\ra\signTlx\xi_{n}\sqrt{\hht}\big]
			\end{split}
			\\
			\label{eq:xlip24}
			&=\frac{1}{\hht}\EE\big[\la\Theta,\;\sig_{n}(x)\xi_{n}\sqrt{\hht}\ra\signTlx\xi_{n}\sqrt{\hht}\big]. 
			\end{align}
			By \cref{A2} $\sig_{n}$ and $\signTl$ are uniformly bounded.  Hence, it follows from \eqref{eq:xlip24} that
			\begin{align}
			\label{eq:xlip25}
			\lvert Z^{\hp}_{n}(x,\;\pi^i_{n,x}(p))\rvert\leq \frac{1}{\hht}\|\sig_{n}\|_\infty\|\signTl\|_\infty\EE \big[|\Theta|_{\infty}\lvert\xi_{n}\sqrt{\hht}\rvert^2\big]\leq C.
			\end{align}
			We substitute \eqref{eq:xlip25}, \eqref{eq:xlip26} into \eqref{eq:xlip16} and get that
			\begin{align*}
			\lvert 	V^{\hp}_{n}(x,\;p)\vert\leq \sum_{i=1}^{I}C_n\psi_{n,\;x}^{i}(p)=C_n,
			\end{align*}
			where $C_{n}:=C_{n+1} + \hht C$. Consequently, $V^{\hp}_{n}(x,\;p)$, $n=0,\dots,N$ is uniformly bounded by $C:=C_N + CT$.
	\end{proof}}
	
	\subsection{Almost H{\"o}lder continuity in $t$}\label{subsec:thol}
	{
		\begin{lem}\label{lem:thol}
			For any $\hht>0$, $\hp>0$ and $x\in \RR^d$, $p\in\Delta(I)$ the interpolant $V_{\hht}^{\hp}$ defined in \eqref{eq:tinter} satisfies the following inequality
			\begin{equation*}
			\begin{split}
			\lvert V_{\hht}^{\hp}(s,x,\;p)-V_{\hht}^{\hp}(t,\;x,\;p)\rvert  
			\leq C\lvert s -t\rvert^{1/2} + C\hht^{1/2}\qquad \forall s,t\in[0,\;T],
			\end{split}
			\end{equation*}
			where the constant $C>0$ depends only on \cref{A1,A2,A3,A4}.
		\end{lem}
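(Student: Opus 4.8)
The plan is to reduce \cref{lem:thol} to a bound on the nodal differences $\lvert V_n^m(x)-V_{n'}^m(x)\rvert$ at two time levels $n\le n'$, and then to prove that bound by a single comparison over $n'-n$ steps; iterating the one-step-in-time estimate would only yield a bound of order $(n'-n)\hht^{1/2}$, which degenerates as $\hht\to 0$. For the reduction, note that by \eqref{vtilde} one has $V_n^\hp(x,\cdot)=\vexp[\widetilde V_n^\hp(x,\cdot)]$, that $\vexp$ (being monotone and constant-preserving, cf.\ \cref{prop:monotonicvexm}) is $1$-Lipschitz in the sup-norm, and that $\widetilde V_n^\hp(x,\cdot)$ is the piecewise-linear Lagrange interpolant of the nodal values with $\sum_m\be^m\equiv 1$, $\be^m\ge 0$; hence
\[
\lvert V_n^\hp(x,p)-V_{n'}^\hp(x,p)\rvert\le\lVert\widetilde V_n^\hp(x,\cdot)-\widetilde V_{n'}^\hp(x,\cdot)\rVert_\infty\le\max_m\lvert V_n^m(x)-V_{n'}^m(x)\rvert ,
\]
uniformly in $x$ and $p$. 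The extra term $C\hht^{1/2}$ will enter only at the end, through the time interpolation \eqref{eq:tinter}.

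To bound the nodal differences, fix $n\le n'$ and introduce the ``frozen'' scheme obtained from \cref{algo:algo} by dropping both the Hamiltonian and the convexification, namely $\widehat V_{n'}:=V_{n'}^\hp$ and, backward in $j$, $\widehat V_j(x,p):=\EE\bigl[\widehat V_{j+1}(\oX_{j+1}^{x},p)\bigr]$; by the tower property $\widehat V_n(x,p)=\EE\bigl[V_{n'}^\hp(\oX_{n'}^{n,x},p)\bigr]$, and since each $\widehat V_j(x,\cdot)$ is an average (over the law of $\oX$) of the function $V_{n'}^\hp$, which is convex in $p$, it is itself convex in $p$, so that $\widehat V_j(x,p_m)=\vexp[\{\widehat V_j(x,p_k)\}_k](p_m)$ at every node. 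I would then estimate the two sources of error separately. First, the Euler process \eqref{eq:Xeuler} is a martingale whose increments are bounded in $L^2$ by $C\hht^{1/2}$ (boundedness of $\sigma$ from \cref{A2} together with $\EE[\xi_j^a\xi_j^b]=\delta_{ab}$), so $\EE\lvert\oX_{n'}^{n,x}-x\rvert\le\bigl(\EE\lvert\oX_{n'}^{n,x}-x\rvert^2\bigr)^{1/2}\le C(t_{n'}-t_n)^{1/2}$; together with the Lipschitz continuity of $V_{n'}^\hp$ in $x$ (\cref{lem:xlip}) this gives $\lvert\widehat V_n(x,p_m)-V_{n'}^\hp(x,p_m)\rvert\le C(t_{n'}-t_n)^{1/2}$. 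Second, with $\eta_j:=\sup_{x,m}\lvert V_j^m(x)-\widehat V_j(x,p_m)\rvert$ one has $\eta_{n'}=0$ and, using $V_j^m(x)=\vexp[\{\oY^k_j(x)\}_k](p_m)$ from \eqref{eq:algo5}, that the data $\{\oY^k_j(x)\}_k$ and $\{\widehat V_j(x,p_k)\}_k$ differ by at most $\eta_{j+1}+\hht\,C(1+\lvert\oZ^k_j(x)\rvert)$ (by the induction hypothesis applied to $\EE[V_{j+1}^\hp(\oX^x_{j+1},p_k)]$ and the growth bound \eqref{eq:3} on $H$); since $\oZ^k_j$ is uniformly bounded, as shown in the proof of \cref{lem:xlip}, the $1$-Lipschitzness of $\vexp$ and the nodal identity above give $\eta_j\le\eta_{j+1}+C\hht$, hence $\eta_n\le C\hht(n'-n)=C(t_{n'}-t_n)$.

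Adding the two contributions yields $\lvert V_n^m(x)-V_{n'}^m(x)\rvert\le\eta_n+\lvert\widehat V_n(x,p_m)-V_{n'}^\hp(x,p_m)\rvert\le C(t_{n'}-t_n)+C(t_{n'}-t_n)^{1/2}\le C(t_{n'}-t_n)^{1/2}$ since $t_{n'}-t_n\le T$, and by the reduction step the same bound holds for $\lvert V_n^\hp(x,p)-V_{n'}^\hp(x,p)\rvert$ for every $p\in\Delta(I)$. Finally, if $s\in[t_n,t_{n+1}]$ and $t\in[t_{n'},t_{n'+1}]$, then $V_\hht^\hp(s,x,p)$ is a convex combination of $V_n^\hp(x,p)$ and $V_{n+1}^\hp(x,p)$, and likewise at $t$; bounding $\lvert V_\hht^\hp(s,x,p)-V_\hht^\hp(t,x,p)\rvert$ by the largest pairwise difference among the values $V_i^\hp(x,p)$ with $i\in\{n,n+1,n',n'+1\}$, and using $\lvert t_i-t_{i'}\rvert\le\lvert s-t\rvert+2\hht$ together with $(a+b)^{1/2}\le a^{1/2}+b^{1/2}$, one obtains $\lvert V_\hht^\hp(s,x,p)-V_\hht^\hp(t,x,p)\rvert\le C\lvert s-t\rvert^{1/2}+C\hht^{1/2}$.

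The main obstacle is exactly the one noted at the start: a step-by-step comparison in time accumulates an error of order $(n'-n)\hht^{1/2}$ and is useless, so the estimate must be organised as a single comparison over $n'-n$ steps, in which the diffusive displacement contributes only $(t_{n'}-t_n)^{1/2}$ and the Hamiltonian only $\sum_j\hht\,C=C(t_{n'}-t_n)$. What makes this work cleanly is that the convexification introduces no extra error---$\vexp$ is $1$-Lipschitz in the sup-norm of its argument and acts as the identity at the nodes on data coming from a convex function---which is why the auxiliary scheme must retain the process $\oX$ but may discard both $H$ and the discrete convex envelope.
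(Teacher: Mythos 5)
Your proof is correct, and it follows the paper's overall strategy -- estimate the nodal differences $\lvert V_n^m(x)-V_{n'}^m(x)\rvert$ by a single comparison over the $n'-n$ steps so that the diffusion contributes one term $C\,\EE\lvert \oX^{n,x}_{n'}-x\rvert\le C(t_{n'}-t_n)^{1/2}$ (via \cref{lem:xlip} and the martingale property of the Euler chain) while the Hamiltonian contributes only $C\hht$ per step, then pass from the nodes to all $p\in\Delta(I)$ and finish with the piecewise linear time interpolation -- but it differs in how the convexification is handled along the way. The paper telescopes the scheme itself using the one-sided inequality $\vexp[f]\le f$ (cf.\ \eqref{eq:thol1}--\eqref{eq:thol2}), which yields only an upper bound on the earlier-minus-later difference and disposes of the reverse direction with the terse remark ``reverting the role of $s$ and $t$''; as written, that reverse bound in fact needs a lower bound of the form $V_n^m(x)\ge \EE[V_{n+1}^\hp(\oX^x_{n+1},p_m)]-C\hht$, which rests on the convexity of $V^\hp_{n+1}(x,\cdot)$ in $p$ (a Jensen-type argument, or \cref{lem:dpp}). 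Your route makes this two-sidedness automatic: you compare with the auxiliary pure-diffusion scheme $\widehat V_j(x,p)=\EE[\widehat V_{j+1}(\oX^x_{j+1},p)]$, use that $\vexp$ is nonexpansive in the sup-norm of the nodal data (which indeed follows from monotonicity and constant preservation in \cref{prop:monotonicvexm}, exactly as you indicate -- worth writing out, since only items (i)--(ii) are stated there), and use that $\vexp$ acts as the identity at the nodes on data coming from the convex function $\widehat V_j(x,\cdot)$; this is precisely the convexity input that the paper's symmetry remark leaves implicit. The price is the extra bookkeeping of the frozen scheme and the quantity $\eta_j$, the gain is a genuinely symmetric estimate $\lvert V_n^m(x)-V_{n'}^m(x)\rvert\le C(t_{n'}-t_n)+C(t_{n'}-t_n)^{1/2}$ in one pass; the remaining ingredients (boundedness of $\oZ^m_n$ from the proof of \cref{lem:xlip}, the growth bound \eqref{eq:3}, and the final interpolation step giving the additional $C\hht^{1/2}$) are used as in the paper.
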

		\begin{proof}
			We consider the piecewise linear Lagrange basis functions associated with $\Pi_\hht$ as
			\begin{equation*}
			\chi_{n}(t)=
			\left\{
			\begin{array}{lllll}
			\D \frac{t - t_{n-1}}{\hht},\quad &\mbox{for}\,\, t\in[t_{n-1}, t_n]\,,\\
			\D \frac{t_{n+1}-t}{\hht},\quad& \mbox{for}\,\,t\in[t_{n}, t_{n+1}]\,,\\
			\D 0 \qquad & \mbox{otherwise}\,,
			\end{array}
			\right.
			\end{equation*}
			for $n=0,\dots,N$ and note that $\sum_{n=0}^{N}\chi_{n}(t) = 1$ for $t\in[0,T]$.
			
			We consider $s,t\in[0,\;T]$ where {$t\in[t_{n},\;t_{n+1}]$ and $s\in[t_{n + n'},\;t_{n + n' + 1}]$.
				We deduce from \eqref{eq:tinter} that}
			\begin{align}
			\notag
			\begin{split}
			V_{\hht}^{\hp}(t,\;x,p)-V_{\hht}^{\hp}(s,\;x,p)&=\sum_{k=0}^{1}V_{n+k}^{\hp}(x,p)\chi_{n+k}(t)-\sum_{k'=0}^{1}V_{n+n'+k'}^{\hp}(x,p)\chi_{n+n'+k'}(s)
			\end{split}
			\\
			\label{eq:thold0}
			&=\sum_{k=0}^{1}\sum_{k'=0}^{1}\big(V_{n+k}^{\hp}(x,p)-V_{n+n'+k'}^{\hp}(x,p)\big)\chi_{n+k}(t)\chi_{n+n'+k'}(s)\,.
			\end{align} 
			for $x\in \RR^d$ and $p\in\Delta(I)$.
			
			Since $\vexp[f]\leq f$ we get for $p_m\in\mN^\hp$, recall (\ref{eq:algo4}), (\ref{eq:algo5}), that
			\begin{align}\label{eq:thol1}
			V_{n + k}^{\hp}(x,p_m)\leq \EE\big[V^{\hp}_{n + k+ 1}(\oX^{n+k,\;x}_{n + k+ 1},p_m)\big] + \hht H\big(t_{n + k},\;x,\;{Z}^{\hp}_{n + k}(x,p_m),p_m\big),
			\end{align}
			with ${Z}^{\hp}_{n + k}(x,p_m) := \frac{1}{{\hht}}\EE\big[V^{\hp}_{n +k+ 1}(\oX^{n+k,\;x}_{n +k+ 1},p_m)\sig_{n+k}^{-T}\xi_{n + k}\sqrt{\hht}\big]$. 
			Using \eqref{eq:xlip25} and \eqref{eq:3} we obtain from \eqref{eq:thol1} that
			\begin{equation}\label{eq:sup26}
			\begin{split}
			&V_{n + k}^{\hp}(x,p_m)-V_{n+n' + k'}^{\hp}(x,p_m) 
			\\
			& \leq \EE\big[V^{\hp}_{n + k+ 1}(\oX^{n+k,\;x}_{n + k+ 1},p_m)-V_{n + n' + k'}^{\hp}(x,p_m)\big] + \hht H\big(t_{n + k},\;x,\;{Z}^{\hp}_{n + k}(x,p_m),p_m\big)
			\\
			& \leq \EE\big[V^{\hp}_{n +k+ 1}(\oX^{n+k,\;x}_{n+k+1},p_m)-V_{n + n' + k'}^{\hp}(x,p_m)\big] + C\hht\big( 1 + C\lvert {Z}^{\hp}_{n + k}(x,p_m)\rvert\big)
			\\
			&  \leq \EE\big[V^{\hp}_{n +k+ 1}(\oX^{n+k,\;x}_{n+k+1},p_m)\big]-V_{n + n' + k'}^{\hp}(x,p_m) + C\hht.
			\end{split}
			\end{equation}
			Recursively, we estimate the first term on the right-hand side above using the corresponding analogue of \eqref{eq:thol1} as
			\begin{align}
			\notag
			& V_{n + k + 1}^{\hp}(\oX^{n+k,\;x}_{n+k+1},p_m)-V_{n+n' + k'}^{\hp}(x,p_m)
			\\
			\label{eq:sup22}
			&\hspace{10pt}\leq \EE\Big[V^{\hp}_{n +k+ 2}(\oX^{n+k+1,\;\oX^{n+k,\;x}_{n+k+1}}_{n+k+2},p_m)\Big]-V_{n + n' + k'}^{\hp}(x,p_m) +  C\hht.
			\end{align} 
			We substitute \eqref{eq:sup22} into \eqref{eq:sup26} and obtain on noting $\oX^{n+k,\;x}_{n+k+2}= \oX^{n+k+1,\;\oX^{n+k,\;x}_{n+k+1}}_{n+k+2} $ (cf. \eqref{eq:Xeuler}) that
			\begin{equation*}
			V_{n + k}^{\hp}(x,p_m)-V_{n+n' + k'}^{\hp}(x,p_m)\leq \EE\Big[V^{\hp}_{n +k+ 2}(\oX^{n+k,\;x}_{n+k+2},p_m)-V_{n + n' + k'}^{\hp}(x,p_m)\Big] +  C2\hht.
			\end{equation*}
			Consequently, we get after $(n' + k'-k -2)$ recursive steps that
			\begin{equation}
			\label{eq:thol2}
			\begin{split}
			&V_{n + k}^{\hp}(x,p_m)-V_{n+n' + k'}^{\hp}(x,p_m)
			\\
			&\hspace{20pt}\leq \big\lvert\EE\big[V^{\hp}_{n +n'+ k'}(\oX^{n+k,\;x}_{n+n' + k'},p_m)-V^{\hp}_{n + n' +k'}(x,p_m) \big]\big\rvert + (n'+ k'-k)C\hht.
			\end{split}
			\end{equation}
			By \cref{lem:xlip} and \cref{A2} we estimate the first term on the right hand side of \eqref{eq:thol2} as
			\begin{equation}
			\nonumber 
			\begin{split}
			&\big\lvert\EE\big[ V^{\hp}_{n+n' +k'}(x,p_m) - V^{\hp}_{n + n' + k'}(\oX^{n+k,x}_{n+n' +k'},p_m)\big]\big\rvert\leq C \big\lvert\EE\big[ x -\oX^{n+k,x}_{n+n' +k'}\big]\big\rvert
			\\
			&\hspace{20pt} 
			\leq C\bigg[\sum_{j=n+ k}^{n+n'+ k'-1}\EE \llvert \sigma_j(\oX^{n+k,\;x}_{j})\rrvert^2\hht\bigg]^{1/2}\leq C\llvert t_{n+ k} - t_{n+n'+ k'}\rrvert^{1/2}\,,
			\end{split}
			\end{equation}
			and get
			\begin{align}\label{lt0}
			&V_{n+ k}^{\hp}(x,p_m)-V_{n+n'+ k'}^{\hp}(x,p_m)\leq C\llvert t_{n+ k} - t_{n+n'+ k'}\rrvert^{1/2} + C(t_{n+n' +k'} -t_{n+ k})\,.
			\end{align} 
			Since  $t\in[t_{n},\;t_{n + 1}]$ and $s\in[t_{n + n'},\;t_{n + n' + 1}]$ (i.e.\ $|t-t_{n+k}|< \tau$, $|s-t_{n+n'+k'}|< \tau$ for $k,\, k'=0,1$), we have from (\ref{lt0}) by the triangle inequality, and the fact that $|t-s| \leq T^{1/2}|t-s|^{1/2}$,
			\begin{equation}\label{eq:thol5}
			V_{n+ k}^{\hp}(x,p_m)-V_{n+n'+ k'}^{\hp}(x,p_m)\leq C T^{1/2}\lvert t-s\vert^{1/2} + C\hht^{1/2},
			\end{equation}
			for $k,\, k'=0,1$ and $p_m\in\mN^\hp$.
			
			{
				On recalling that $V_{n}^{\hp}(x,p_m)=\widetilde{V}_{n}^{\hp}(x,p_m)$ for $p_m\in\mN^\hp$, $V_{n}^{\hp}(x,p)=\vexp\big[\widetilde{V}_{n}^{\hp}(x,\cdot)\big](p)$ for $p\in\Delta(I)$,
				we deduce analogically as in the proof of Lemma~\ref{lem:xlip} (cf. \eqref{lip_vtilde}, \eqref{lip_v}),
				that the inequality (\ref{eq:thol5}) holds for any $p\in \Delta(I)$ .}
			Hence, substituting \eqref{eq:thol5} into \eqref{eq:thold0} for $p\in \Delta(I)$ implies the inequality
			\begin{equation}
			V_{\tau}^{\hp}(t,x,p)-V_{\tau}^{\hp}(s,x,p)\leq   C\lvert t-s\vert^{1/2} + C\hht^{1/2}\,.
			\end{equation}
			After reverting the role of $s$ and $t$ and repeating the above steps we get the statement of the lemma.
		\end{proof}
	}
	
	\subsection{Monotonicity}\label{subsec:monot}
	{
		\begin{lem}\label{lem:monotonicity}
			Let $\phi_1,\;\phi_2:\RR^d\rightarrow \RR$ be two uniformly Lipschitz continuous functions that satisfy $0\leq (\phi_1-\phi_2)\leq C$. Then for any $x\in\RR^d$, $p\in\Delta(I)$, $\hht>0$,  $n= 0,\dots,N-1$ it holds that
			\begin{equation*}
			\begin{split}
			\EE&\big[\phi_1(\oX_{n+1}^x)\big] + \hht H\Big(t_n,\; x,\;\frac{1}{\hht}\EE\big[\phi_1(\oX_{n+1}^x)\signTlx\xi_{n}\sqrt{\hht} \big],\;p \Big)
			\\
			&\hspace{50pt}\geq \EE\big[\phi_2(\oX_{n+1}^x)\big] + \hht H\Big(t_n,\;x,\;\frac{1}{\hht}\EE\big[\phi_2(\oX_{n+1}^x)\signTlx\xi_{n}\sqrt{\hht} \big],\;p \Big) -C\hht\sqrt{\hht},
			\end{split}
			\end{equation*}
			where $C>0$ is a constant which depends only on \cref{A1,A2,A3,A4}.
		\end{lem}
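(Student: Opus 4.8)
I would treat the two sides of the inequality as one step of the scheme's increment operator evaluated at $\phi_1$ and at $\phi_2$, and prove a monotonicity estimate with a consistency error of order $\hht^{3/2}$. First I would set $\psi:=\phi_1-\phi_2$, so that $0\le\psi\le C$ by hypothesis, and abbreviate $Z_i:=\tfrac{1}{\hht}\EE\big[\phi_i(\oX_{n+1}^x)\signTlx\xi_{n}\sqrt{\hht}\big]$, $i=1,2$. Then the claim is equivalent to
\[
\EE\big[\psi(\oX_{n+1}^x)\big]+\hht\big(H(t_n,x,Z_1,p)-H(t_n,x,Z_2,p)\big)\;\ge\;-C\hht^{3/2}.
\]
The first summand is $\ge0$ since $\psi\ge0$; it is the ``good'' term, and the strategy is to show that the error introduced by the $H$-term is controlled by exactly this quantity times a factor $\sqrt{\hht}$.

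The heart of the proof is an estimate on $Z_1-Z_2$. By linearity $Z_1-Z_2=\tfrac{1}{\hht}\EE\big[\psi(\oX_{n+1}^x)\signTlx\xi_{n}\sqrt{\hht}\big]$. Since the components of $\xi_{n}$ take values $\pm1$ and, by \cref{A2}, $\signTl$ is uniformly bounded, one has $\big|\signTlx\xi_{n}\sqrt{\hht}\big|\le C\sqrt{\hht}$ almost surely; crucially, because $\psi\ge0$ we may treat $\psi(\oX_{n+1}^x)$ as a nonnegative weight, giving
\[
|Z_1-Z_2|\;\le\;\tfrac{1}{\hht}\,\EE\big[\psi(\oX_{n+1}^x)\,\big|\signTlx\xi_{n}\sqrt{\hht}\big|\big]\;\le\;\tfrac{C}{\sqrt{\hht}}\,\EE\big[\psi(\oX_{n+1}^x)\big].
\]
Combining this with the Lipschitz continuity of $H$ in the gradient slot, cf.\ \eqref{eq:4}, I obtain $\hht\big|H(t_n,x,Z_1,p)-H(t_n,x,Z_2,p)\big|\le C\hht|Z_1-Z_2|\le C\sqrt{\hht}\,\EE[\psi(\oX_{n+1}^x)]$, whence
\[
\EE\big[\psi(\oX_{n+1}^x)\big]+\hht\big(H(t_n,x,Z_1,p)-H(t_n,x,Z_2,p)\big)\;\ge\;\big(1-C\sqrt{\hht}\big)\,\EE\big[\psi(\oX_{n+1}^x)\big].
\]
If $C\sqrt{\hht}\le1$ the right-hand side is $\ge0\ge-C\hht^{3/2}$; if $C\sqrt{\hht}>1$, i.e.\ $\hht\ge C^{-2}$, then using $0\le\EE[\psi(\oX_{n+1}^x)]\le C$ together with $\sqrt{\hht}=\hht^{3/2}/\hht\le C^{2}\hht^{3/2}$ I get the right-hand side $\ge-C^{2}\sqrt{\hht}\ge-C^{4}\hht^{3/2}$. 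In both cases the bound $\ge-C\hht^{3/2}$ holds with a constant that depends only on the data of \cref{A1,A2,A3,A4}.

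The hard part is this central estimate. A cruder bound --- for instance writing $\psi(\oX_{n+1}^x)=\psi(x)+\langle\Theta,\sigma_n(x)\xi_{n}\sqrt{\hht}\rangle$ via the generalized mean value theorem \cite[Theorem~2.3.7]{clarke1990optimization} and using only $\EE[\signTlx\xi_{n}\sqrt{\hht}]=0$ --- would give merely $|Z_1-Z_2|\le C$, hence a consistency error of order $\hht$ instead of $\hht^{3/2}$, and would moreover involve the Lipschitz constants of $\phi_1,\phi_2$, which must not enter the final constant. The way around this is to use the sign condition $\psi\ge0$ to factor out the quantity $\EE[\psi(\oX_{n+1}^x)]$, which is precisely the term that already appears with a $+$ sign on the left, so that the ``good'' and ``bad'' contributions can be played against each other. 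The final case distinction on the size of $\hht$ is only a technical device; for the convergence analysis only small $\hht$ is relevant, and there the first case already suffices.
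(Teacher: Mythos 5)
Your proof is correct and takes essentially the same route as the paper: the paper rewrites the difference, via the generalized mean value theorem applied to $H$ in the gradient slot, as $\EE\big[(\phi_1-\phi_2)(\oX_{n+1}^x)\big(1+\langle\Theta,\signTlx\xi_n\sqrt{\hht}\rangle\big)\big]$ and splits on the (for the binomial walk deterministic) event $C\lVert\sigma^{-1}\rVert_\infty\lvert\xi_n\rvert\sqrt{\hht}<1$, which is exactly your estimate $\hht\lvert H(t_n,x,Z_1,p)-H(t_n,x,Z_2,p)\rvert\le C\sqrt{\hht}\,\EE[\phi_1-\phi_2]$ combined with your small/large-$\hht$ dichotomy. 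The differences are purely cosmetic (Lipschitz bound \eqref{eq:4} in place of Clarke's mean value theorem, and a case distinction on $\hht$ in place of the indicator splitting), so the argument stands as written.
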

		\begin{proof}
			We set 
			\begin{equation*}
			\begin{split}
			\mH:= \EE\big[(\phi_1-\phi_2)(\oX_{n+1}^x)\big] + \hht &H\Big(t_n,\; x,\;\frac{1}{\hht}\EE\left[\phi_1(\oX_{n+1}^x)\signTlx\xi_{n}\sqrt{\hht}\right],\;p \Big) 
			\\
			&-\hht H\Big(t_n,\;x,\;\frac{1}{\hht}\EE\left[\phi_2(\oX_{n+1}^x)\signTlx\xi_{n}\sqrt{\hht} \right],\;p \Big).
			\end{split}
			\end{equation*}
			By \eqref{eq:4} and the generalized mean value theorem \cite[Theorem 2.3.7 ]{clarke1990optimization} there exists a $\Theta\in \RR^d$, $|\Theta|_{\infty}\leq C$ such that
			\begin{align}
			\notag
			\begin{split}
			\mH&= \EE\big[(\phi_1-\phi_2)(\oX_{n+1}^x)\big (1 + \big \la \Theta, \signTlx\xi_{n}\sqrt{\hht}\big\ra\big)\Big]
			\end{split}
			\\
			\label{eq:monot1}
			\begin{split}
			&= \EE\big[\ind_{\big\{C\lVert\sig^{-1}\rVert_{\infty}\lvert \xi_n\rvert\sqrt{\hht}\geq 1\big\}}(\phi_1-\phi_2)(\oX_{n+1}^x)\big (1 + \big \la \Theta, \;\signTlx\xi_{n}\sqrt{\hht}\big\ra\big)\Big]
			\\
			&\hspace{25pt}+ \EE\big[\ind_{\big\{C\lVert\sig^{-1}\rVert_{\infty}\lvert \xi_n\rvert\sqrt{\hht}< 1\big\}}(\phi_1-\phi_2)(\oX_{n+1}^x)\big (1 + \big \la \Theta, \;\signTlx\xi_{n}\sqrt{\hht}\big\ra\big)\Big].
			\end{split}
			\end{align}
			Next, we show that the second term of the right hand side of \eqref{eq:monot1} is positive. Since $(\phi_1-\phi_2)\geq 0$, it remains to examine the term $(1 + \big \la \Theta,\; \signTlx\xi_{n}\sqrt{\hht}\big\ra\big)$. 
			We note that
			\begin{align}
			\nonumber
			1 + \big \la \Theta,\; \signTlx\xi_{n}\sqrt{\hht}\big\ra&\geq 1 -\|\Theta\sig_{n}^{-1}\|_\infty \lvert \xi_{n}\rvert\sqrt{\hht}\geq 1 -C\lVert\sig^{-1}_n\rVert_{\infty}\lvert \xi_{n}\rvert\sqrt{\hht}.
			\end{align}
			For $C\lVert \sig^{-1}\rVert_{\infty}\lvert \xi_{n}\rvert\sqrt{\hht}<1$ it holds  $\big(1 + \big \la \Theta,\; \signTlx\xi_{n}\sqrt{\hht}\big\ra\big)> 0$ and hence
			\begin{equation}\label{eq:monot3}
			\EE\big[\ind_{\big\{C\lVert\sig^{-1}\rVert_{\infty}\lvert \xi_n\rvert\sqrt{\hht}< 1\big\}}(\phi_1-\phi_2)(\oX_{n+1}^x)\big (1 + \big \la \Theta, \;\signTlx\xi_{n}\sqrt{\hht}\big\ra\big)\Big]\geq 0.
			\end{equation}
			Using\eqref{eq:monot3} we deduce from \eqref{eq:monot1} that
			\begin{equation*}
			\begin{split}
			\mH\geq \EE\Big[\ind_{\big\{\llvert \xi_{n}\sqrt{\hht} \rrvert^2\geq 1/R\big\}}\big\langle \Theta,\;  (\phi_1-\phi_2)(\oX_{n+1}^x)\signTlx\xi_{n}\sqrt{\hht} \big\rangle\Big],
			\end{split}
			\end{equation*}
			where $R:=C^2\lVert \sig^{-1}\rVert^2_{\infty}$. 
			
			On noting that $\lvert \xi_n\rvert = \lvert \xi_n^{1}\rvert + \ldots + \lvert \xi_n^{d}\rvert = (1 + \ldots + 1) = d$ we deduce
			\begin{align*}
			&\EE\Big[\ind_{\big\{\llvert \xi_{n} \rrvert^2{\hht}\geq 1/R\big\}}\lvert\xi_{n}\rvert\sqrt{\hht}\Big] 
			\\
			&= \ind_{\big\{d^2\hht\geq 1/R\big\}}d\sqrt{\hht}
			=
			\ind_{\big\{d^2\hht\geq 1/R\big\}}R\frac{1}{R}d\sqrt{\hht}
			\leq \ind_{\big\{d^2\hht\geq 1/R\big\}}R\hht d^3\sqrt{\hht}\leq R\hht d^3\sqrt{\hht}.
			\end{align*}
			
			Since $-(\phi_1-\phi_2)\geq -C$  we conclude
			\begin{equation*}
			\begin{split}
			\mH& \geq  - \EE\Big[\ind_{\big\{\llvert \xi_{n}\sqrt{\hht} \rrvert^2\geq 1/R\big\}}(\phi_1-\phi_2)(\oX_{n+1}^x)\lVert\Theta\sig^{-1}\rVert_{\infty} \llvert\xi_{n}\sqrt{\hht}\rrvert\Big]
			\\
			&\geq - C \EE\Big[\ind_{\big\{\llvert \xi_{n}\sqrt{\hht} \rrvert^2\geq 1/R\big\}}\lvert\xi_{n}\sqrt{\hht}\rvert\Big]
			= - CRd^3\hht \sqrt{\hht}.
			\end{split}
			\end{equation*}	
	\end{proof}}
	
	\section{Convergence of the numerical approximation}\label{sec_conv}
	
	In this section we prove the convergence of numerical approximation, see \cref{thm:1} below, in several steps. 
	First, we show that, up to a subsequence, the sequence $\{V^\hp_\hht\}_{\hp,\;\hht>0}$ admits a limit denoted by $w$.
	We then prove the viscosity super/sub-solution property of every accumulation point $w$. Hence, by the uniqueness property of the viscosity solution, see \cite{cardaliaguet2009a},
	we may conclude that the whole sequence $\{V^\hp_\hht\}_{\hp,\;\hht>0}$ converges to the viscosity solution.
	
	\begin{theorem}\label{thm:1}
		Under \cref{A1,A2,A3,A4} the numerical solution $V^{\hp}_{\hht}$ converges to the viscosity solution of \eqref{eq:1} (uniformly on compact subsets of $[0,\;T]\times\RR^d\times\Delta(I)$) in the sense that
		for all $(t',\;x',\;p')\to(t,\;x,\;p)$ it holds that
		\begin{equation*}
		\lim_ {\hht, \hp\rightarrow 0} V^{\hp}_{\hht}(t',x',p') = V(t,\;x,\;p),
		\end{equation*}
		where $V$ is the unique uniformly bounded and continuous viscosity solution to \eqref{eq:1} which is convex and uniformly Lipschitz continuous in $p$. 
	\end{theorem}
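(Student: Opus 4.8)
The plan is to combine the regularity and stability estimates of \cref{sec_reg} with the half-relaxed-limit (Barles--Souganidis) method and the comparison principle of \cite{cardaliaguet2009a}.

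\emph{Step 1: compactness and passage to the limit.} By \cref{lem:plip0,lem:xlip,lem:thol} the family $\{V^{\hp}_{\hht}\}$ is uniformly bounded and satisfies $|V^{\hp}_{\hht}(t,x,p)-V^{\hp}_{\hht}(t',x',p')|\le C\big(|x-x'|+|p-p'|+|t-t'|^{1/2}\big)+C\hht^{1/2}$. Hence any sequence $(\hht_k,\hp_k)\to(0,0)$ admits a subsequence along which $V^{\hp_k}_{\hht_k}$ converges, uniformly on compact subsets of $[0,T]\times\RR^d\times\Delta(I)$, to a limit $w$ which is bounded, Lipschitz in $(x,p)$, $\tfrac12$-H\"older in $t$, convex in $p$ (a locally uniform limit of the convex functions $V^{\hp}_{\hht}(t,x,\cdot)$), and satisfies $w(T,x,p)=\la p,g(x)\ra$ (since $V^{\hp}_N(x,p)=\la p,g(x)\ra$ by construction and \cref{lem:thol}). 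It then suffices to show that every such $w$ is a viscosity solution of \eqref{eq:1}: by the uniqueness statement of \cite{cardaliaguet2009a} this forces $w=V$ independently of the subsequence, whence $V^{\hp}_{\hht}\to V$ locally uniformly, and the claim for $(t',x',p')\to(t,x,p)$ follows from local uniform convergence together with continuity of $V$.

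\emph{Step 2: subsolution property.} Let $\phi$ be smooth with $w-\phi$ having a local maximum (w.l.o.g.\ strict, by adding a quartic) at an interior point $(\ot,\ox,\op)$. Since $w$ is convex in $p$ and $\op$ is interior, $T_{\Delta(I)}(\op)$ is the hyperplane $\{\sum z_i=0\}$, and the second-difference inequality $\tfrac12\big[\phi(\ot,\ox,\op+z)+\phi(\ot,\ox,\op-z)\big]-\phi(\ot,\ox,\op)\ge\tfrac12\big[w(\ot,\ox,\op+z)+w(\ot,\ox,\op-z)\big]-w(\ot,\ox,\op)\ge 0$ gives $\lbdmin\big(\op,\tfrac{\partial^2\phi}{\partial p^2}\big)\ge 0$. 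For the parabolic part, use Step~1 to choose grid points $(t_{n_k},x_k)\to(\ot,\ox)$ and $p$-nodes $p_{m_k}\to\op$ at which $V^{\hp_k}_{n_k}-\phi(t_{n_k},\cdot,\cdot)$ is approximately maximal over a fixed neighbourhood; combine the contraction $V^{m_k}_{n_k}(x_k)=\vexp[\oY_{n_k}(x_k)](p_{m_k})\le\oY^{m_k}_{n_k}(x_k)$ (cf.\ \eqref{eq:algo4}, \eqref{eq:algo5}) with the maximality (so $V^{m_k}_{n_k+1}(\oX^{x_k}_{n_k+1})\le\phi(t_{n_k+1},\oX^{x_k}_{n_k+1},p_{m_k})+\mathrm{const}$, legitimate because $|\oX^{x_k}_{n_k+1}-x_k|=O(\hht_k^{1/2})$), take expectations, cancel the common term, divide by $\hht_k$, and Taylor-expand $\phi$ using $\EE[\xi_n]=0$, $\EE[\xi_n\xi_n^{T}]=\mathrm{Id}$ and the vanishing of the odd moments of the random walk. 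Together with the gradient identification $\oZ^{m_k}_{n_k}(x_k)\to D_x\phi(\ot,\ox,\op)$ — obtained, as in \cite{gruen2012aprobabilistic}, from the Lipschitz bound on the scheme, the generalized mean-value theorem \cite[Theorem~2.3.7]{clarke1990optimization} and $\EE[\sig^{-T}_n\xi_n\xi_n^{T}\sig^{T}_n]=\mathrm{Id}$ — and the continuity and growth of $H$, see \eqref{eq:3}--\eqref{eq:4}, this yields $\dt\phi+\tfrac12\Tr(\sig\sig^{T}D^2_x\phi)+H(\cdot,D_x\phi,\cdot)\ge 0$ at $(\ot,\ox,\op)$, i.e.\ \eqref{eq:13}.

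\emph{Step 3: supersolution property and conclusion.} Let $\phi$ be smooth with $w-\phi$ having a strict local minimum at an interior $(\ot,\ox,\op)$ and suppose, for contradiction, that both $\lbdmin\big(\op,\tfrac{\partial^2\phi}{\partial p^2}\big)>0$ and $\dt\phi+\tfrac12\Tr(\sig\sig^{T}D^2_x\phi)+H(\cdot,D_x\phi,\cdot)>0$ hold there. Strict positivity of $\lbdmin$ makes $\phi$ strictly convex in $p$ near $\op$; since $w$ lies above $\phi$ with contact at $(\ot,\ox,\op)$, the stability estimates imply that for $k$ large the discrete convexification step \eqref{eq:algo5} is inactive near the argmin, i.e.\ $V^{m}_{n_k}(x_k)=\oY^{m}_{n_k}(x_k)$ for the relevant nodes $p_m\to\op$; there the scheme reduces to the plain HJB update, and repeating the expansion of Step~2 — now invoking the monotonicity of the one-step operator (\cref{lem:monotonicity}), whose $O(\hht^{3/2})$ defect is negligible after division by $\hht$ — produces $\dt\phi+\tfrac12\Tr(\sig\sig^{T}D^2_x\phi)+H(\cdot,D_x\phi,\cdot)\le 0$ at $(\ot,\ox,\op)$, a contradiction; hence \eqref{eq:14}. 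Thus $w$ is a bounded continuous viscosity solution of \eqref{eq:1} with the prescribed terminal datum, so $w=V$ by \cite{cardaliaguet2009a}; as the subsequence was arbitrary and the family is uniformly bounded and asymptotically equicontinuous, $V^{\hp}_{\hht}\to V$ uniformly on compacts, as claimed. The main obstacle is the consistency analysis, and in particular the interplay between the HJB update and the convexification in \eqref{eq:algo5}: proving in Step~3 that the discrete lower convex envelope is locally inactive when the test function is strictly convex in $p$ — equivalently, controlling the discrete-convexity defect uniformly in $\hp$ and $\hht$ — is the delicate point, alongside the gradient identification for $\oZ^{m_k}_{n_k}$ and the handling of the $p$-interpolation error, all carried out following \cite{gruen2012aprobabilistic} via the estimates of \cref{sec_reg}.
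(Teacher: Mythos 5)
Your Steps 1 and 2 follow the paper's own route: compactness via \cref{lem:plip0,lem:xlip,lem:thol} and Arzel\`a--Ascoli, the observation that the limit $w$ is convex in $p$ so that $\lbdmin\geq 0$ at an interior maximum, the inequality $\vexp[f]\leq f$, and a Taylor expansion after transferring from $V^\hp_\hht$ to the test function (in the paper this transfer, including the $Z$-argument inside $H$, is done with \cref{lem:monotonicity}; your sketch glosses over the $Z$-term — note that $\oZ^{m_k}_{n_k}(x_k)$ is built from $V_{n_k+1}$, not from $\phi$, so its identification with $D_x\phi$ is exactly what \cref{lem:monotonicity} plus the expansion \eqref{eq:28}--\eqref{eq:29} is for — but this is the same family of argument and is repairable).

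Step 3, however, has a genuine gap. You assert that strict convexity of $\phi$ in $p$ near $\op$, together with the stability estimates, implies that the convexification \eqref{eq:algo5} is \emph{inactive} near the argmin, i.e.\ $V^{m}_{n_k}(x_k)=\oY^{m}_{n_k}(x_k)$ for nodes $p_m$ close to $\opt$, so that the scheme locally reduces to the plain HJB update. This is not proved and does not follow from the estimates you cite: $\vexp$ is a \emph{global} operation on the data $\{\oY^{1}_{n}(x),\dots,\oY^{M}_{n}(x)\}$ over all of $\mN^\hp$, the pre-convexification values $\oY_n$ need not be convex in $p$, and locally uniform convergence of $V^\hp_\hht$ to a limit that is strictly convex near $\op$ does not prevent the envelope at nodes near $\opt$ from being realized by chords involving distant nodes, with a defect $\oY^{m}_{n}-V^{m}_{n}$ that is merely $o(1)$ rather than $o(\hht)$ — and after dividing by $\hht$ only an $o(\hht)$ defect is harmless. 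You yourself flag this as ``the delicate point'', but it is precisely the step that carries the supersolution property, so asserting it leaves the proof incomplete. The paper avoids this issue by a different mechanism: it represents the discrete lower convex envelope exactly through the one-step feedback martingale $\fp_{n+1}^{x,\,p}$ (\cref{defn:onestepfb}, \cref{lem:dpp}), uses the strict convexity of $\phi$ in $p$ to derive the quadratic bound \eqref{eq:39}, $\EE\big[\lvert \opt-\fp_{n+1}\rvert^2\big]\leq C\hht$, and only then feeds this into the dynamic programming identity \eqref{eq:37} together with \cref{lem:monotonicity} and the Lipschitz continuity of $H$ in $(z,p)$ to replace $\fp_{n+1}$ by $\opt$ up to errors vanishing after division by $\hht$. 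To close your argument you would either need to prove a quantitative version of your ``local inactivity'' claim (a convexification defect of order $o(\hht)$ at the relevant nodes, uniformly in $\hp$), or adopt the martingale-representation device of the paper.
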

	
	\subsection{Existence of a limit}\label{ssec:41}
	\begin{lem}\label{lem:5}
		The sequence $\{ V_\hht^\hp\}_{\hht,\;\hp>0}$ admits a subsequence
		which converges uniformly on every compact subset of $[0,\;T]\times\RR^d\times\Delta(I)$ to a uniformly bounded and continuous function $w$ which is convex and uniformly Lipschitz continuous in $p$.
	\end{lem}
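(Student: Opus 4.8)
The plan is to invoke a compactness argument of Arzelà–Ascoli type. The family $\{V^\hp_\hht\}_{\hht,\hp>0}$ is \emph{equibounded} and \emph{equicontinuous} on $[0,T]\times\RR^d\times\Delta(I)$: by \cref{lem:xlip}\,(ii) we have $\lvert V^\hp_\hht(t,x,p)\rvert\leq C$ uniformly; by \cref{lem:xlip}\,(i) the map $x\mapsto V^\hp_n(x,p)$ is $C$-Lipschitz uniformly in $n$, hence so is $x\mapsto V^\hp_\hht(t,x,p)$ for every fixed $t$ since the time interpolant \eqref{eq:tinter} is a convex combination of the $V^\hp_n$; by \cref{lem:plip0} the map $p\mapsto V^\hp_n(x,p)$ is $C$-Lipschitz uniformly in $n,x$, hence so is $p\mapsto V^\hp_\hht(t,x,p)$ by the same convex-combination argument; and by \cref{lem:thol} we have $\lvert V^\hp_\hht(s,x,p)-V^\hp_\hht(t,x,p)\rvert\leq C\lvert s-t\rvert^{1/2}+C\hht^{1/2}$. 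Combining these with the triangle inequality gives, for all $(s,y,q),(t,x,p)$ and all $\hht,\hp>0$,
\begin{equation*}
\lvert V^\hp_\hht(s,y,q)-V^\hp_\hht(t,x,p)\rvert\leq C\bigl(\lvert s-t\rvert^{1/2}+\lvert x-y\rvert+\lvert p-q\rvert\bigr)+C\hht^{1/2},
\end{equation*}
where $C$ depends only on \cref{A1,A2,A3,A4}.

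Next I would pass to the limit. Fix an increasing sequence of compact sets $Q_k:=[0,T]\times\overline{B_k(0)}\times\Delta(I)$ exhausting $[0,T]\times\RR^d\times\Delta(I)$ (or better, compact subsets $Q_k$ whose union is the whole domain). On each $Q_k$, the family restricted to $Q_k$ is uniformly bounded, and the displayed modulus-of-continuity estimate shows it is \emph{uniformly equicontinuous} up to the vanishing error $C\hht^{1/2}$; a routine variant of the Arzelà--Ascoli theorem (handle the $C\hht^{1/2}$ slack by a standard $\varepsilon/3$ argument) then yields a subsequence converging uniformly on $Q_k$. A diagonal extraction over $k$ produces a single subsequence $(\hht_j,\hp_j)\to(0,0)$ along which $V^{\hp_j}_{\hht_j}$ converges, uniformly on every compact subset, to some limit function $w$.

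It remains to record that $w$ inherits the stated regularity. Uniform convergence preserves continuity, so $w$ is continuous; the uniform bound $\lvert V^\hp_\hht\rvert\leq C$ passes to the limit, so $w$ is uniformly bounded by the same $C$; the $p$-Lipschitz estimate \eqref{eq:plip017} and the $x$-Lipschitz estimate pass to the limit pointwise, so $w$ is uniformly Lipschitz in $p$ (with constant $L_0$) and in $x$; finally, for each fixed $(t,x)$ the function $p\mapsto V^\hp_n(x,p)$ is convex by construction (it is the piecewise-linear interpolant of a discrete lower convex envelope, cf. \eqref{eq:pinter}, \eqref{vtilde}), hence $p\mapsto V^\hp_\hht(t,x,p)$ is convex as a convex combination of convex functions, and convexity is preserved under pointwise limits, so $p\mapsto w(t,x,p)$ is convex. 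This gives all assertions of the lemma.

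I do not expect a genuine obstacle here: every ingredient is already established in \cref{sec_reg}, and the only mild subtlety is threading the $C\hht^{1/2}$ term through the Arzelà--Ascoli argument, which is handled by the usual three-$\varepsilon$ splitting since this term vanishes along the sequence. The one bookkeeping point worth stating carefully is that the convex-combination structure of the time interpolant \eqref{eq:tinter} is what transfers the spatial and probability-variable Lipschitz bounds (which are proved node-by-node in $t$) to the continuous-time interpolant; this is immediate but should be mentioned explicitly.
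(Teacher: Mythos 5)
Your proposal is correct and follows essentially the same route as the paper, which simply cites a slight modification of the Arzel\`a--Ascoli theorem with equi-boundedness from \cref{lem:xlip} and equi-continuity from \cref{lem:plip0,lem:xlip,lem:thol}; your handling of the $C\hht^{1/2}$ slack and the transfer of the nodal Lipschitz bounds through the time interpolant \eqref{eq:tinter} is exactly the ``slight modification'' the paper alludes to. The only addition you make is spelling out the inheritance of convexity and Lipschitz continuity by the limit, which the paper leaves implicit.
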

	
	\begin{proof}
		The proof is a consequence of a slight modification of  the Arzel\`a--Ascoli theorem, \cite[Section~III.3]{yosida2013functional}. The equi-boundedness is granted by \cref{lem:xlip} and the equi-continuity is granted by \cref{lem:plip0,lem:xlip,lem:thol}.
	\end{proof}
	
	\subsection{Viscosity solution properties and uniqueness of the limit}\label{susbec:42}
	
	Below, we show that every accumulation point $w$ of $\{V^\hp_\hht\}_{\hp,\;\hht>0}$ from \cref{lem:5} is a viscosity sub- and super-solution to \eqref{eq:1}
	which by uniqueness of the viscosity solution $V$ implies Theorem~\ref{thm:1}.
	
	\subsubsection{Viscosity subsolution property of $w$}
	
	\begin{prop}\label{prop:sub} Every accumulation point $w$ of the sequence $\{ V_\hht^\hp\}_{\hht,\;\hp>0}$
		is a viscosity subsolution of \eqref{eq:1} on $[0,\;T]\times\RR^d\times\Delta(I)$.
	\end{prop}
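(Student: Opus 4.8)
The plan is to run the Barles--Souganidis argument for the (approximately) monotone, stable and consistent scheme of \cref{algo:algo}, with the convexification step \eqref{eq:algo5} entering only through $\vexp[f](p_m)\le f(p_m)$. By \cref{lem:5} the accumulation point $w$ is continuous, bounded, convex and Lipschitz in $p$; fix a smooth test function $\phi$ such that $w-\phi$ has a strict local maximum at a point $(\ot,\ox,\op)$ with $\op\in\Delta(I)$, normalise $w(\ot,\ox,\op)=\phi(\ot,\ox,\op)$, and (by a standard localisation) assume $\phi$ is bounded and uniformly Lipschitz in $x$. It suffices to verify the two inequalities in the equivalent form \eqref{eq:13}. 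The convexity constraint $\lbdmin\big(\op,\tfrac{\partial^2\phi}{\partial p^2}\big)\ge0$ is immediate from convexity of the limit: since $w(\ot,\ox,\cdot)$ is convex and $w-\phi$ has a local maximum in $p$ at the \emph{interior} point $\op$, for every $z$ with $\sum_i z_i=0$ and all small $s>0$,
\begin{equation*}
\phi(\ot,\ox,\op+sz)+\phi(\ot,\ox,\op-sz)-2\phi(\ot,\ox,\op)\ \ge\ w(\ot,\ox,\op+sz)+w(\ot,\ox,\op-sz)-2w(\ot,\ox,\op)\ \ge\ 0,
\end{equation*}
so dividing by $s^2$, letting $s\to0$ and recalling that the tangent cone to $\Delta(I)$ at an interior point is $\{z:\sum_i z_i=0\}$ gives $\lbdmin\big(\op,\tfrac{\partial^2\phi}{\partial p^2}\big)\ge0$. (If $\ot=T$ this follows from $V^m_N=\la p_m,g\ra$; if $\ot=0$, the argument below applies with $t_{n_j}=0$ excluded for $j$ large.)

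For the parabolic inequality, first produce along a subsequence $(\hht_j,\hp_j)\to(0,0)$ grid points $(t^j,x^j,p^j)$, $t^j=t_{n_j}\in\Pi_{\hht_j}$, $x^j\in\RR^d$, $p^j=p_{m_j}\in\mN^{\hp_j}$, which are local maximisers of $V^{\hp_j}_{\hht_j}-\phi$ over $\Pi_{\hht_j}\times\RR^d\times\mN^{\hp_j}$ with $(t^j,x^j,p^j)\to(\ot,\ox,\op)$: this is the usual perturbation step, combining the uniform-on-compacts convergence from \cref{lem:5}, the density of $\mN^{\hp_j}$ in $\overline{\Delta(I)}$ and of $\Pi_{\hht_j}$ in $[0,T]$, the equicontinuity estimates \cref{lem:plip0,lem:xlip,lem:thol}, and strictness of the maximum at $(\ot,\ox,\op)$. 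Set $c_j:=(V^{\hp_j}_{\hht_j}-\phi)(t^j,x^j,p^j)\to0$. Since $\oX^{x^j}_{n_j+1}=x^j+\sig_{n_j}(x^j)\xi_{n_j}\sqrt{\hht_j}$ lies, for $j$ large, in any prescribed neighbourhood of $\ox$ and $t_{n_j+1}\to\ot$, the maximality yields pathwise
\begin{equation*}
V^{m_j}_{n_j+1}\big(\oX^{x^j}_{n_j+1}\big)\ \le\ \phi\big(t_{n_j+1},\oX^{x^j}_{n_j+1},p^j\big)+c_j\ =:\ \Psi_j\big(\oX^{x^j}_{n_j+1}\big),
\end{equation*}
and $0\le\Psi_j-V^{m_j}_{n_j+1}\le C$ on the range of $\oX^{x^j}_{n_j+1}$ by the bounds in \cref{lem:xlip}.

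Now combine the scheme with \cref{lem:monotonicity}. Let $S_j[\cdot]$ denote the one-step operator $\psi\mapsto\EE[\psi(\oX^{x^j}_{n_j+1})]+\hht_j H\big(t_{n_j},x^j,\tfrac1{\hht_j}\EE[\psi(\oX^{x^j}_{n_j+1})\,\sig_{n_j}^{-T}(x^j)\xi_{n_j}\sqrt{\hht_j}],p^j\big)$; by \eqref{eq:algo4}, \eqref{eq:algo5} and $\vexp[f](p_m)\le f(p_m)$ we have $V^{m_j}_{n_j}(x^j)=\vexp\big[\oY_{n_j}(x^j)\big](p^j)\le\oY^{m_j}_{n_j}(x^j)=S_j[V^{m_j}_{n_j+1}]$. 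Applying \cref{lem:monotonicity} with $\phi_1=\Psi_j$, $\phi_2=V^{m_j}_{n_j+1}$ gives $S_j[\Psi_j]\ge S_j[V^{m_j}_{n_j+1}]-C\hht_j^{3/2}$; since $\Psi_j=\phi(t_{n_j+1},\cdot,p^j)+c_j$ on the support of $\oX^{x^j}_{n_j+1}$ and $\EE[\sig_{n_j}^{-T}(x^j)\xi_{n_j}]=0$, the constant $c_j$ passes through $S_j$ additively, and using $V^{m_j}_{n_j}(x^j)=\phi(t_{n_j},x^j,p^j)+c_j$ the $c_j$'s cancel, leaving
\begin{equation*}
\phi(t_{n_j},x^j,p^j)-\EE\big[\phi(t_{n_j+1},\oX^{x^j}_{n_j+1},p^j)\big]\ \le\ \hht_j\,H\Big(t_{n_j},x^j,\tfrac1{\hht_j}\EE\big[\phi(t_{n_j+1},\oX^{x^j}_{n_j+1},p^j)\,\sig_{n_j}^{-T}(x^j)\xi_{n_j}\sqrt{\hht_j}\big],p^j\Big)+C\hht_j^{3/2}.
\end{equation*}
Dividing by $\hht_j$ and Taylor-expanding the \emph{smooth} function $\phi$, using $\EE[\xi_{n_j}]=0$, $\EE[\xi_{n_j}\xi_{n_j}^{T}]=\mathrm{Id}$ and the uniformly bounded moments of the binomial random walk, the left-hand side tends to $-\dt\phi-\tfrac12\Tr(\sig\sig^{T}D^2_x\phi)$ at $(\ot,\ox,\op)$, the finite-difference gradient inside $H$ tends to $D_x\phi(\ot,\ox,\op)$, and $C\hht_j^{1/2}\to0$; by \eqref{eq:4} this yields $\dt\phi+\tfrac12\Tr(\sig\sig^{T}D^2_x\phi)+H(\ot,\ox,D_x\phi,\op)\ge0$, which is the required inequality.

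The step I expect to be the crux is the use of \cref{lem:monotonicity}: because the discrete gradient $\oZ$ formed from the merely Lipschitz interpolant $V^\hp_{n+1}$ is \emph{not} a consistent approximation of $D_x\phi$ when $V^\hp_{n+1}$ is only touched from above by $\phi$, and because $H$ is not monotone in $z$, the inequality $V^{m_j}_{n_j+1}\le\Psi_j$ cannot be pushed through $S_j$ naively; it is precisely the $O(\hht^{3/2})$ approximate-monotonicity slack of \cref{lem:monotonicity} that makes the auxiliary constants $c_j$ cancel and reduces everything to the routine consistency of the probabilistic one-step operator applied to the smooth test function. The remaining effort is bookkeeping: the localisation of $\phi$, working with maxima over the grid rather than over the continuum, and the trivial handling of $\ot\in\{0,T\}$.
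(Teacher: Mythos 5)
Your proposal is correct and follows essentially the same route as the paper's proof: grid maximizers of $V^\hp_\hht-\phi$ converging to $(\ot,\ox,\op)$, the inequality $\vexp[f](p_m)\le f(p_m)$ applied to the scheme, the approximate monotonicity of \cref{lem:monotonicity} applied with the shifted test function versus the numerical solution (your $\Psi_j$ is exactly the paper's $\phi^\hp_\hht$, and the additive constants cancel in the same way), and consistency via Taylor expansion of the smooth test function in the one-step probabilistic operator. The only cosmetic difference is that you verify $\lbdmin\big(\op,\tfrac{\partial^2\phi}{\partial p^2}\big)\ge 0$ by a direct second-difference argument at the interior point $\op$, whereas the paper obtains it from convexity of $w$ by citing \cite{oberman2007the}.
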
	
	\begin{proof} Let $\phi:[0,\;T]\times\RR^d\times\Delta(I)\rightarrow\RR$ be a smooth test function such that $w-\phi $ has a strict global maximum at $(\ot,\;\ox,\;\op)$, where $\op\in\Delta(I)$.
		We have to show, that $\phi$ satisfies \eqref{eq:13} at $(\ot,\;\ox,\;\op)$.

		As a limit of convex functions $w$ is convex in $p$ and (cf. \cite[Theorem~1]{oberman2007the}) we have since $\op\in\Delta(I)$
		\begin{equation}\label{eq:15}
		\lbdmin\left(\op,\;\frac{\partial^2\phi}{\partial p ^2}(\ot,\;\ox,\;\op)\right)\geq 0.
		\end{equation} 
		
		Similarly to \cite[Lemma~2.4]{bardi2009optimal} we note that there exists a sequence $(\ott,\;\oxt,\;\opt)_\hht$ such that $\ott =  l_\tau\hht \in\Pi_\hht$, $ l_\tau\in\mathbb{N}$ converges to $\ot$, $\opt\in\mN^{h}$ converges to $\op$, and $\oxt$ to $ \ox$
		for $\tau,h \rightarrow 0$; also $ V^\hp_\hht -\phi$ has a global maximum at $(\ott,\;\oxt,\;\opt)$ {on $\Pi_\hht \times \mathbb{R}^d\times \mN^{h}$}.
		
		Define $\phi^\hp_\hht := \phi  + (V_\hht^\hp-\phi)(\ott,\;\oxt,\;\opt)$. Then for all $x\in\RR^d,\;p_m\in\mN^\hp$ we have
		\begin{equation}\label{eq:16}
		V_\hht^\hp(\ott + \hht,\;x,\;p_m) -\phi_\hht^\hp(\ott + \hht, \;x, \;p_m)\leq V_\hht^\hp(\ott,\;\oxt,\;\opt) -\phi_\hht^\hp(\ott,\;\oxt,\;\opt) =0.
		\end{equation}
		
		Throughout the proof we set
		\begin{equation*}
		\oX_{n + 1} := \oxt + \sigma(\ott,\;\oxt)\xi_{l_\tau}\sqrt{\hht};
		\end{equation*}
		and for each $m\in\big\{1,\ldots,M\big\}$:
		\begin{align}
		\notag
		\oZ^m_{n}(\oxt) 
		&:=\frac{1}{{\hht}}\EE\big[V_{\hht}^{\hp}(\ott + \hht,\;\oX_{n + 1}, \;p_m)(\sigma(\ott,\;\oxt))^{-T}\xi_{l_\tau}\sqrt{\hht}\big],
		\\
		\label{eq:sub3}
		\oY^m_{n}(\oxt) &:=\EE\big[V^\hp_{\hht}(\ott + \hht,\;\oX_{n+1},\;p_m)\big] + \hht H\big(\ott,\;\oxt,\;\oZ^m_{n}(\oxt),\;p_m\big).
		\end{align}
		We denote the non convex data set as $\mY_{n}(\oxt) := \big\{\oY^1_{n}(\oxt), \ldots, \oY^M_{n}(\oxt)\big\}$.  By definition $\vexp[f](p_m)\leq f(p_m)$. Thus from \eqref{eq:sub3} we have 
		\begin{align}
		\notag
		V^{\hp}_{\hht}(\ott,\;\oxt,\;\opt) &=\vexp\big[\mY_{n}(\oxt)\big](\opt)
		\\
		\label{eq:sub6}
		\begin{split}
		&\leq \EE\big[V^\hp_{\hht}(\ott + \hht,\;\oX_{n+1},\;\opt)\big] 
		\\
		&\hspace{20pt}+ \hht H\Big(\ott,\;\oxt,\;\frac{1}{{\hht}}\EE\big[V_{\hht}^{\hp}(\ott + \hht,\;\oX_{n + 1}, \;\opt)(\sigma(\ott,\;\oxt))^{-T}\xi_{l_\tau}\sqrt{\hht}\big],\;\opt\Big).
		\end{split}
		\end{align}
		
		We use \cref{lem:monotonicity} with $\phi_2(\cdot) := V^\hp_\hht(\ott+  \hht,\;\cdot,\;\opt)$ and $\phi_1(\cdot) := \phi^\hp_\hht(\ott+  \hht,\;\cdot,\;\opt)$. Then, by \eqref{eq:16} and \eqref{eq:sub6} it follows immediately that
		\begin{align}
		\notag
		\begin{split}
		0&\leq  \EE\big[V_\hht^\hp(\ott + \hht,\;\oX_{n+1},\;\opt)\big]-V_\hht^\hp(\ott,\;\oxt,\;\opt) \\
		&\hspace{20pt}+\hht H\Big(\ott,\;\oxt,\;\frac{1}{{\hht}}\EE\big[V_{\hht}^{\hp}(\ott + \hht,\;\oX_{n + 1}, \;\opt)(\sigma(\ott,\;\oxt))^{-T}\xi_{l_\tau}\sqrt{\hht}\big],\;\opt\Big) 
		\end{split}
		\\
		\label{eq:sub5}
		\begin{split}
		&\leq \EE\big[ \phi(\ott + \hht,\;\oX_{n+1},\;\opt)\big]- \phi(\ott,\;\oxt,\;\opt)
		\\
		&\hspace{20pt}+ \hht H\Big(\ott,\;\oxt,\;\frac{1}{ {\hht}}\EE\big[\phi_\hht^\hp(\ott + \hht,\;\oX_{n+1},\;\opt)(\sigma(\ott,\;\oxt))^{-T}\xi_{l_\tau}\sqrt{\hht} \big],\;\opt\Big) + C\hht\sqrt{\hht}.
		\end{split}
		\end{align}
		
		First, we calculate the expectation in the first term of the right hand side of \eqref{eq:sub5}. By the Taylor expansion
		\begin{equation}\label{eq:26}
		\begin{split}
		\phi&(\ott +\hht,\;\oX_{n+1},\;\opt) 
		\\
		&\hspace{5pt}=\phi(\ott,\;\oxt,\;\opt)+\Big[\dt\phi(\ott,\;\oxt,\;\opt)+\frac12\Tr\big(\sigma\sigma^T(\ott,\;\oxt)D^2_x \phi(\ott,\;\oxt,\;\opt)\big)\Big]\hht
		\\
		&\hphantom{=\phi(\ott,\;\oxt,\;\opt)+}+ \big[D_x\phi(\ott,\;\oxt,\;\opt)
		\big] \sigma(\ott,\;\oxt)\xi_{l_\tau}\sqrt{\hht} + \hht \mO(\hht) + \hht \mO(\hht^{1/2}),
		\end{split}
		\end{equation}
		where $\mO(\hht)\to 0 $ when $\hht\to 0$. Taking the expectation of \eqref{eq:26}, we obtain
		\begin{equation}\label{eq:27}
		\begin{split}
		\EE &\big[\phi(\ott +\hht,\;\oX_{n+1},\;\opt)\big]
		\\ 
		&=\phi(\ott,\;\oxt,\;\opt) + \Big[\dt\phi(\ott,\;\oxt,\;\opt)
		+\frac12\Tr\big(\sigma\sigma^T(\ott,\;\oxt)D^2_x \phi(\ott,\;\oxt,\;\opt)\big)\Big]\hht 
		\\
		&\hspace{240pt}+ \hht \mO(\hht) + \hht \mO(\hht^{1/2}).
		\end{split}
		\end{equation}
		
		Next, we calculate the expectation in the third term of the right hand side of \eqref{eq:sub5}.
		We multiply \eqref{eq:26} with $(\sigma(\ott,\;\oxt))^{-T}\xi_{l_\tau}\sqrt{\hht}$ to get
		\begin{equation}\label{eq:28}
		\begin{split}
		\phi(\ott +\hht,\;\oX_{n+1},\;\opt)&(\sigma(\ott,\;\oxt))^{-T}\xi_{l_\tau}\sqrt{\hht}
		=\phi(\ott, \;\oxt,\;\opt)(\sigma(\ott,\;\oxt))^{-T}\xi_{l_\tau}\sqrt{\hht}
		\\[5pt]
		&+ D_x\phi(\ott,\;\oxt,\;\opt)
		\sigma(\ott,\;\oxt)\xi_{l_\tau}\sqrt{\hht}(\sigma(\ott,\;\oxt))^{-T}\xi_{l_\tau}\sqrt{\hht} 
		\\[7pt]
		&+\dt\phi(\ott,\;\oxt,\;\opt)\hht (\sigma(\ott,\;\oxt))^{-T}\xi_{l_\tau}\sqrt{\hht}
		\\[5pt]
		&+\frac12\Tr\big(\sigma\sigma^T(\ott,\;\oxt)D^2_x \phi(\ott,\;\oxt,\;\opt)\big)\hht (\sigma(\ott,\;\oxt))^{-T}\xi_{l_\tau}\sqrt{\hht}
		\\[5pt]
		&+(\sigma(\ott,\;\oxt))^{-T}\xi_{l_\tau}\hht\sqrt{\hht} \mO(\hht) +(\sigma(\ott,\;\oxt))^{-T}\xi_{l_\tau}\hht\sqrt{\hht} \mO(\hht^{1/2}),
		\end{split}
		\end{equation}
		where $\mO(r)\to 0$ when $r\to 0$. 
		Taking the expectation in \eqref{eq:28} yields to
		\begin{equation}\label{eq:29}
		\frac{1}{{\hht}}\EE\big[\phi(\ott +\hht,\;\oX_{n+1},\;\opt)(\sigma(\ott,\;\oxt))^{-T}\xi_{l_\tau}\sqrt{\hht}\big]
		=
		D_x\phi(\ott,\;\oxt,\;\opt).
		\end{equation}
		By substituting \eqref{eq:27} and \eqref{eq:29} into \eqref{eq:sub5} we arrive to
		\begin{align*}
		0\leq
		\dt\phi(\ott,\;\oxt,\;\opt)
		+\frac12\Tr&\big(\sigma\sigma^T(\ott,\;\oxt)D^2_x \phi(\ott,\;\oxt,\;\opt)\big)
		\\
		&+ H(\ott,\;\oxt,\;D_x\phi(\ott,\;\oxt,\;\opt) ,\;\opt) \\
		&+ \hht \mO(\hht) 
		+ \hht \mO(\hht^{1/2}) + C\hht\sqrt{\hht}.
		\end{align*}
		Taking the limit $\hht,\,\hp\to0$ we get on recalling $(\ott,\;\oxt,\;\opt) \to (\ot,\;\ox,\;\op)$ that
		\begin{align}\label{eq:30}
		0\leq
		\dt\phi(\ot,\;\ox,\;\op)
		+\frac12\Tr\big(\sigma\sigma^T(\ot,\;\ox)D^2_x \phi(\ot,\;\ox,\;\op)\big)
		+ H\big(\ot,\;\ox,\;D_x\phi(\ot,\;\ox,\;\op),\;\op\big).
		\end{align}
		With \eqref{eq:15} and \eqref{eq:30}, we conclude that the limit $w$ of the sequence $\{V_\hht^\hp\}_{\hp,\;\hht>0}$ satisfies \eqref{eq:13}. Hence, $w$ is a viscosity subsolution of \eqref{eq:1}.
	\end{proof}
	
	\subsubsection{Viscosity supersolution property of $w$}\label{ssec:42}
	
	To establish the viscosity supersolution property of the limits of the numerical approximation in \cref{prop:sup} below,
	we construct in \cref{defn:onestepfb} martingale processes that satisfy a one step dynamic programming principle \cref{lem:dpp}, cf. \cite{gruen2012aprobabilistic}.
	
	For  $n=N-1,\ldots,0$, $x\in\RR^d$ and a given $p\in\overline{\Delta(I)}$, 
	we denote by
	$K_{n,\;x}(p) =[\pi^1_{n,\;x}(p),\ldots,\pi^I_{n,\;x}(p)]$ the simplex in $\mM^\hp_{n,\;x}$ such that $p\in \overline{K}_{n,\;x}(p)$, 
	and denote by $\{\psi^i_{n,\;x}(p):i=1,\ldots,I\}$ the Lagrange polynomial basis on $K_{n,\;x}(p)$. 
	By \eqref{eq:vex0} and \eqref{eq:pinter}, we write 
	\begin{equation}\label{eq:martingale0}
	\begin{split}
	&V_n^\hp(x,\;p)=\vexp\big[
	\oY^{1}_{n}(x),\ldots,\oY^{M}_{n}(x)\big](p)
	\\
	& = \sum_{i=1}^I \Big(\EE\big[V^{\hp}_{n+1}(\oX^x_{n+1},\;\pi_{n,x}^i(p))\big] 
	+ \hht H\big(t_n,\;x,\;Z^h_n(x,\;\pi^i_{n,\;x}(p)),\;\pi_{n,x}^i(p)\big)\Big)\psi^i_{n,\;x}(p),
	\end{split}
	\end{equation}
	with $Z^h_n(x,\;\pi^i_{n,\;x}(p)):=\frac{1}{{\hht}}\EE\big[V_{n+1}^{\hp}(\oX^x_{n + 1}, \;\pi_{n,x}^i(p))(\sigma_n^{-T}(x)\xi_{n}\sqrt{\hht}\big]$. The set of vertices of the triangle $K_{n,\;x}(p)$ will be denoted by $\mN^h_{n,\;x}(p):=\{\pi^1_{n,\;x}(p),\ldots,\pi^I_{n,\;x}(p)\}$.
	{
		\begin{defn}[One-step feedback]\label{defn:onestepfb}
			Let  $n\in \{N-1,\ldots,0\}$, $x\in \RR^d$, and $p = (p_1,\ldots,p_I)\in\Delta(I)$.  
			We define the one-step feedbacks $\fp_{n+1}^{i,\;x,\;p}$ as {$\mN^h_{n,\;x}(p)$-valued} random variables which are independent of {$\{\xi_n\}_{n=0}^{N-1}$} such that
			\begin{enumerate}[label = $\roman*)$]
				\item for $n =0,\ldots,N-1$
				\begin{enumerate}[label = $\alph*)$]
					\item if $p_{i} = 0$ set $\fp_{n+1}^{i,\;x,\;p} = p$
					\item if $p_{i}>0$: choose $\fp_{n+1}^{i,\;x,\;p}$ among $\big\{\pi_{n,x}^1(p),\ldots,\pi_{n,x}^I(p)\big\}\in \mathcal{N}^h_{n,x}$ with probability
					\begin{equation}\label{eq:martingale}
					\begin{split}
					&\PP\Big[ \fp_{n+1}^{i,\;x,\;p}= \pi_{n,\;x}^j(p)\Big|\big(\fp_{n'}^{i',\;x',\;p_{m'}}\big)_{i'\in\{1,\ldots,I\},\;x'\in\RR^d,\;m'\in\{1,\ldots,M\},\;n'\in\{1,\ldots,n\}}\Big] 
					\\
					&\hspace{50pt}= \frac{(\pi_{n,\;x}^j(p))_i}{p_i}\psi^j_{n,\;x}(p)
					\end{split}
					\end{equation}
				\end{enumerate}
				\item for $n=N$ set $\fp_{n+1}^{i,\;x,\;p} = e_i$, where $\{e_i:i=1,\ldots,I\}$ is the canonical basis of $\RR^{I}$.
			\end{enumerate}
			Furthermore we define $\fp_{n+1}^{x,\;p}:=\fp_{n+1}^{\bi,\;x,\;p}$, where the index $\bi$ is a random variable with law $p = (p_1,\ldots,p_I)$ (i.e. $\PP\big[\bi=i\big] = p_i$, $i=1,\ldots,I$), 
			independent of {$\{\xi_n\}_{n=0}^{N-1}$} and of the process  $\left(\fp_{n}^{j,\;x,\;p}\right)_{j\in\{1,\ldots,I\},\;x\;\in\RR^d,\;p\;\in\Delta(I),\;n\;\in\{1,\ldots,L\}}$.
		\end{defn}
		
		\begin{rem}
			The probability $p_i$ in \cref{defn:onestepfb} is the $i$-th component of the probability vector $p\in\Delta(I)$, i.e., 
			it is the probability of the ``chosen'' game. 
			In this case, the optimal behavior of Player~1 at time $t_{n+1}$ is derived from the one step feedback $\fp_{n+1}^{x,\;p}:=\fp_{n+1}^{\bi,\;x,\;p}$. 
			This feedback is the discrete version (in time and in $p$) of its continuous counterpart see \cite[Lemma\,3.2.]{cardaliaguet2009on}, \cite[Definition\,3.9]{gruen2012aprobabilistic}
			and \cite[Section~4.1]{cardal09num} for more details.
		\end{rem}
		
		The one-step feedback $\fp_{n+1}^{x,\;p}$ is a martingale and provides a representation formula for the discrete lower convex envelope.
		\begin{lem}\label{lem:dpp}
			For all $n = 0,\ldots,N-1,\;x\in\RR^d,\;p_m\in\mN^h$ we have
			\begin{equation*}
			\begin{split}
			V_n^h(x, p_m) = V_n^m(x)&=\vexp\big[\oY^{1}_{n}(x),\ldots,\oY^{M}_{n}(x)\big](p_m) 
			\\
			& = \EE \Big[V^\hp_{n+1}(\oX^x_{n+1},\;\fp_{n+1}^{x,\;p_m})+ \hht H\big(t_n,\;x,\;Z^\hp_n(x,\;\fp_{n+1}^{x,\;p_m} ),\;\fp_{n+1}^{x,\;p_m}\big)\Big],
			\end{split}
			\end{equation*}
			with $Z^\hp_n(x,\;\fp_{n+1}^{x,\;p_m} ):=\frac{1}{{\hht}}\EE\big[V_{n+1}^{\hp}(\oX^x_{n + 1}, \;\fp_{n+1}^{x,\;p_m})(\sigma_n^{-T}(x)\xi_{n}\sqrt{\hht}\big]$.
		\end{lem}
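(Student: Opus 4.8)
The key identity to establish is
\[
\vexp\big[\oY^{1}_{n}(x),\ldots,\oY^{M}_{n}(x)\big](p_m)
= \EE \Big[V^\hp_{n+1}(\oX^x_{n+1},\;\fp_{n+1}^{x,\;p_m})+ \hht H\big(t_n,\;x,\;Z^\hp_n(x,\;\fp_{n+1}^{x,\;p_m} ),\;\fp_{n+1}^{x,\;p_m}\big)\Big].
\]
The starting point is the representation \eqref{eq:martingale0}, which already expresses the left-hand side as a convex combination
$\sum_{i=1}^I \oY^{\ast}_n(x,\pi^i_{n,x}(p_m))\,\psi^i_{n,x}(p_m)$ of the values
$\oY^{\ast}_n(x,q):=\EE[V^\hp_{n+1}(\oX^x_{n+1},q)]+\hht H(t_n,x,Z^h_n(x,q),q)$ over the vertices of the simplex $K_{n,x}(p_m)$ containing $p_m$. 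So the task reduces to showing that the law of $\fp_{n+1}^{x,p_m}$ on $\mN^h_{n,x}(p_m)=\{\pi^1_{n,x}(p_m),\ldots,\pi^I_{n,x}(p_m)\}$ assigns to $\pi^j_{n,x}(p_m)$ exactly the barycentric weight $\psi^j_{n,x}(p_m)$, and that conditioning on the outcome of $\fp_{n+1}^{x,p_m}$ and taking expectation reproduces precisely the summand. The only subtlety is that $\fp_{n+1}^{x,p_m}=\fp_{n+1}^{\bi,x,p_m}$ is built by first drawing the index $\bi$ with law $p_m=(p_{m,1},\ldots,p_{m,I})$ and then drawing $\fp_{n+1}^{i,x,p_m}$ using the conditional probabilities \eqref{eq:martingale}.

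First I would compute the unconditional law of $\fp_{n+1}^{x,p_m}$. For a vertex $\pi^j_{n,x}(p_m)$, summing over the possible values of $\bi$ and using \eqref{eq:martingale} together with case a) of \cref{defn:onestepfb} (the $p_{m,i}=0$ branch never lands on $\pi^j$ unless $p_m$ itself is a vertex, a degenerate case handled separately), one gets
\[
\PP\big[\fp_{n+1}^{x,p_m}=\pi^j_{n,x}(p_m)\big]
=\sum_{i:\,p_{m,i}>0} p_{m,i}\cdot\frac{(\pi^j_{n,x}(p_m))_i}{p_{m,i}}\,\psi^j_{n,x}(p_m)
=\psi^j_{n,x}(p_m)\sum_{i=1}^I (\pi^j_{n,x}(p_m))_i
=\psi^j_{n,x}(p_m),
\]
since each $\pi^j_{n,x}(p_m)\in\Delta(I)$ has components summing to one. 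This is the martingale/weights property. Then, because $\fp_{n+1}^{x,p_m}$ is independent of $\{\xi_n\}$ by construction, conditioning on $\{\fp_{n+1}^{x,p_m}=\pi^j_{n,x}(p_m)\}$ leaves the law of $\oX^x_{n+1}$ and $\xi_n$ unchanged, so
$\EE[\,\cdot\mid \fp_{n+1}^{x,p_m}=\pi^j_{n,x}(p_m)]$ of the bracketed expression equals exactly $\oY^{\ast}_n(x,\pi^j_{n,x}(p_m))$ (here one must also note $Z^\hp_n(x,\fp_{n+1}^{x,p_m})$ restricted to the event coincides with $Z^h_n(x,\pi^j_{n,x}(p_m))$, again by independence). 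Taking the tower expectation over the law of $\fp_{n+1}^{x,p_m}$ reassembles $\sum_j \oY^{\ast}_n(x,\pi^j_{n,x}(p_m))\psi^j_{n,x}(p_m)$, which by \eqref{eq:martingale0} is $V^m_n(x)$.

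For the martingale claim itself one checks $\EE[\fp_{n+1}^{x,p_m}]=\sum_j \pi^j_{n,x}(p_m)\,\psi^j_{n,x}(p_m)=p_m$, which is just the barycentric-coordinate identity for $p_m\in\overline{K}_{n,x}(p_m)$; combined with the measurability/independence structure in \cref{defn:onestepfb} this gives the (one-step) martingale property with respect to the natural filtration. The degenerate cases — $p_m$ lying on a lower-dimensional face of $K_{n,x}(p_m)$, i.e. some $\psi^j_{n,x}(p_m)=0$ or some component $p_{m,i}=0$ — are dispatched by observing that case a) of \cref{defn:onestepfb} forces $\fp_{n+1}^{i,x,p_m}=p_m$ on those indices, which contributes zero weight to any vertex $\pi^j\ne p_m$ and is consistent with $\psi^j_{n,x}(p_m)=0$ there. \textbf{The main obstacle} I anticipate is purely bookkeeping: carefully matching the conditional probabilities \eqref{eq:martingale} (which are indexed jointly by $\bi$ and the target vertex) against the barycentric weights $\psi^j_{n,x}(p_m)$ while simultaneously tracking the independence of $\fp_{n+1}^{x,p_m}$ from the driving noise $\{\xi_n\}$, so that the substitution of $\fp_{n+1}^{x,p_m}$ into both $V^\hp_{n+1}$ and the $Z^\hp_n$-argument of $H$ is legitimate inside the expectation; no hard analysis is involved beyond this.
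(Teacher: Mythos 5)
Your proposal is correct and takes essentially the same route as the paper's proof: the paper likewise computes, using the independence of $\fp_{n+1}^{x,\;p_m}$ from $\bi$ and $\{\xi_n\}$ and the identity $\sum_{i=1}^{I}(\pi^j_{n,x}(p_m))_i=1$, that $\EE\big[f(\fp_{n+1}^{x,\;p_m})\big]=\sum_{j=1}^{I}f(\pi^j_{n,x}(p_m))\psi^j_{n,\;x}(p_m)$ for any $f$ on $\mN^h$, and then matches this against the representation \eqref{eq:martingale0}. Your separate treatment of the degenerate case $p_{m,i}=0$ is, if anything, slightly more careful than the paper's computation, which tacitly assumes positive components.
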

		\begin{proof} 
			We consider $n=0,\ldots,N-1$, $x\in\RR^d$, $p_m\in\mN^h$
			and note that the law of the process $\fp_{n+1}^{x,\;p_m}$ is given by \eqref{eq:martingale} and  $\fp_{n+1}^{x,\;p_m}$ is independent of $\bi$. 
			Hence it holds for any $f:\mathcal{N}^h\rightarrow \mathbb{R}$ that
			\begin{align}\label{frep}
			\nonumber
			\EE\big[f(\fp_{n+1}^{x,\;p_m})\big] &=\sum_{i=1}^{I}\EE\big[\ind_{\{\bi=i\}}f(\fp_{n+1}^{i,\;x,\;p_m})\big] =\sum_{i=1}^{I}\EE\big[\ind_{\{\bi=i\}}\big]\EE\big[f(\fp_{n+1}^{i,\;x,\;p_m})\big]
			\\
			\nonumber
			&=\sum_{i=1}^{I}p_i\sum_{j=1}^{I}\frac{(\pi^j_{n,x}(p_m))_i}{p_i}\psi^j_{n,\;x}(p_m)f(\pi^j_{n,x}(p_m))
			\\
			& =\sum_{j=1}^{I}\sum_{i=1}^{I}p_i\frac{(\pi^j_{n,x}(p_m))_i}{p_i}\psi^j_{n,\;x}(p_m)f(\pi^j_{n,x}(p_m))
			\\
			\nonumber
			&=\sum_{j=1}^{I}f(\pi^j_{n,x}(p_m))\psi^j_{n,\;x}(p_m)\Big(\sum_{i=1}^{I}(\pi^j_{n,x}(p_m))_i\Big)
			\\
			\nonumber
			&=\sum_{j=1}^{I}f(\pi^j_{n,x}(p_m))\psi^j_{n,\;x}(p_m),
			\end{align}
			since $\sum_{i=1}^{I}(\pi_{n,x}(p_m))_i = 1$. On noting \eqref{eq:martingale0}
			the statement follows directly from (\ref{frep})
			and the fact that $\mathcal{N}^h_{n,x}\subseteq\mathcal{N}^h$.
		\end{proof}
	}
	
	
	\begin{prop}\label{prop:sup}
		Every accumulation point $w$ of the sequence $\{ V_\hht^\hp\}_{\hht,\;\hp>0}$
		is a viscosity supersolution of \eqref{eq:1} on $[0,\;T]\times\RR^d\times\overline{\Delta(I)}$.
	\end{prop}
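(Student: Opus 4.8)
The plan is to verify the equivalent supersolution form \eqref{eq:14} at an arbitrary point where a smooth test function touches $w$ from below, proceeding in parallel with the proof of \cref{prop:sub} but with all inequalities reversed and with the one‑step dynamic programming principle \cref{lem:dpp} playing the role that the elementary bound $\vexp[f]\le f$ played there.

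Fix a smooth $\phi$ such that $w-\phi$ has a strict global minimum at $(\ot,\ox,\op)$ with $\op\in\Delta(I)$ (the only case required by the definition of supersolution). Exactly as in the proof of \cref{prop:sub}, and using \cite[Lemma~2.4]{bardi2009optimal}, choose along the subsequence defining $w$ points $(\ott,\oxt,\opt)$ with $\ott=l_\tau\hht\to\ot$, $\opt\in\mN^h\to\op$, $\oxt\to\ox$ at which $V^\hp_\hht-\phi$ attains its minimum over $\Pi_\hht\times\RR^d\times\mN^h$; set $\phi^\hp_\hht:=\phi+(V^\hp_\hht-\phi)(\ott,\oxt,\opt)$, so that $V^\hp_\hht\ge\phi^\hp_\hht$ on $\Pi_\hht\times\RR^d\times\mN^h$ with equality at $(\ott,\oxt,\opt)$; write $n:=l_\tau$, $\oX_{n+1}:=\oxt+\sig(\ott,\oxt)\xi_{l_\tau}\sqrt\hht$ and $\fp:=\fp^{\oxt,\opt}_{n+1}$. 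By \cref{lem:dpp},
\begin{equation*}
V^\hp_\hht(\ott,\oxt,\opt)=\EE\big[V^\hp_\hht(\ott+\hht,\oX_{n+1},\fp)+\hht H(\ott,\oxt,Z^\hp_n(\oxt,\fp),\fp)\big].
\end{equation*}
Bounding the first term from below by $\EE[\phi^\hp_\hht(\ott+\hht,\oX_{n+1},\fp)]$ via the grid minimality, applying \cref{lem:monotonicity} conditionally on the value of $\fp$ (which is independent of $\{\xi_j\}_j$) with $\phi_1=V^\hp_\hht(\ott+\hht,\cdot,\fp)$ and $\phi_2=\phi^\hp_\hht(\ott+\hht,\cdot,\fp)$ to replace $V^\hp_\hht$ by $\phi$ inside $H$ (the induced discrete drift term being absorbed into the $C\hht^{3/2}$ error), and then cancelling the common constant, we obtain
\begin{equation*}
\phi(\ott,\oxt,\opt)\ \ge\ \EE\big[\phi(\ott+\hht,\oX_{n+1},\fp)\big]+\hht\,\EE\big[H(\ott,\oxt,\tZ(\fp),\fp)\big]-C\hht^{3/2},
\end{equation*}
where $\tZ(q):=\tfrac1\hht\EE[\phi(\ott+\hht,\oX_{n+1},q)(\sig(\ott,\oxt))^{-T}\xi_{l_\tau}\sqrt\hht]$ approximates $D_x\phi(\ott,\oxt,q)$.

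If $\lbdmin\big(\op,\tfrac{\partial^2\phi}{\partial p^2}(\ot,\ox,\op)\big)\le 0$, the second alternative of \eqref{eq:14} holds and we are done; so suppose $\lbdmin\big(\op,\tfrac{\partial^2\phi}{\partial p^2}(\ot,\ox,\op)\big)>0$ and let us produce the first alternative. The crucial point is that in this case the martingale $\fp$ essentially concentrates at $\opt$: since $w$ is convex in $p$ (\cref{lem:5}) while $\phi$ is strictly convex in $p$ near $\op$ along the tangent cone (which, $\op$ being interior, is the whole hyperplane $\{z:\sum_i z_i=0\}$), a function strictly convex in $p$ at $\op$ cannot touch from below, at $\op$, a function that is affine in $p$ in a neighbourhood of $\op$; carried to the discrete level this shows that, for $\hp$ small, the simplex $K_{n,\oxt}(\opt)$ of the data‑dependent partition $\mM^\hp_{n,\oxt}$ over which $V^\hp_\hht(\ott,\oxt,\cdot)$ is affine either has $\opt$ among its vertices (whence $\fp=\opt$ almost surely) or shrinks to $\{\op\}$ as $\hp\to0$. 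In both situations the martingale property of $\fp$ and the local convexity of $\phi(\ott+\hht,\oX_{n+1},\cdot)$ in $p$ give $\EE[\phi(\ott+\hht,\oX_{n+1},\fp)]\ge\EE[\phi(\ott+\hht,\oX_{n+1},\opt)]$, and moreover $\EE[H(\ott,\oxt,\tZ(\fp),\fp)]\to H(\ot,\ox,D_x\phi(\ot,\ox,\op),\op)$ as $\hht,\hp\to0$. Substituting the former into the displayed inequality, Taylor expanding $\EE[\phi(\ott+\hht,\oX_{n+1},\opt)]$ and $\tZ(\opt)$ about $(\ott,\oxt,\opt)$ exactly as in \eqref{eq:26}--\eqref{eq:29}, dividing by $\hht$, and letting $\hht,\hp\to0$, we arrive at $\dt\phi+\tfrac12\Tr(\sig\sig^T D^2_x\phi)+H(\cdot,\cdot,D_x\phi,\cdot)\le0$ at $(\ot,\ox,\op)$, which together with the previous case shows that $w$ satisfies \eqref{eq:14}; hence $w$ is a supersolution.

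The main obstacle is the concentration step above: establishing that the convexity constraint enforced by the scheme is transmitted to the limit, i.e.\ that a strictly‑$p$‑convex test function forces the discrete feedback to collapse onto $\opt$ so that the convexification does not disrupt the one‑step identity of \cref{lem:dpp}, while a test function that is not convex in $p$ at the contact point makes the obstacle term automatically nonpositive. This is the genuinely new ingredient relative to \cite{gruen2012aprobabilistic}; it rests on the convexity of $w$ in $p$ (\cref{lem:5}), the regularity of the data‑dependent simplicial partitions, and the description of the minimal eigenvalue on the tangent cone at interior $p$, cf.\ \cite{oberman2007the,carnicer1992convexity}. The remaining ingredients — the passage to the discrete minimiser, the Hamiltonian swap via \cref{lem:monotonicity}, and the Taylor expansion — are routine and follow \cref{prop:sub} almost verbatim.
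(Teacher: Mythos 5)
Your outer scaffolding (discrete minimiser $(\ott,\oxt,\opt)$, the shifted test function $\phi^\hp_\hht$, the one-step identity of \cref{lem:dpp}, the swap of $V^\hp_\hht$ for $\phi^\hp_\hht$ inside $H$ via \cref{lem:monotonicity}, and the Taylor expansion) coincides with the paper's proof. The genuine gap is the ``concentration step'', which you yourself flag as the main obstacle but then only assert: you claim a dichotomy that for small $\hp$ the data-dependent simplex $K_{n,\oxt}(\opt)$ either has $\opt$ among its vertices (so $\fp=\opt$ a.s.) or shrinks to $\{\op\}$, justified by the remark that a strictly $p$-convex $\phi$ cannot touch from below a function affine on a neighbourhood of $\op$. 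That limiting argument at best excludes $w(\ot,\ox,\cdot)$ being affine on a \emph{full relative neighbourhood} of $\op$; it does not control the discrete geometry. The flat simplices of $V^\hp_\hht(\ott,\oxt,\cdot)$ can stay of size $O(1)$ with $\opt$ lying within one grid cell of their relative boundary (e.g.\ next to a kink node, or near a large face when $I\ge 3$), a configuration in which no contradiction with strict convexity arises (a strictly convex function may touch an affine function from below at a boundary point of the affine region), $\opt$ need not be a vertex, and the one-step feedback puts mass of order one on vertices at distance of order one from $\opt$. In that case neither $\fp=\opt$ a.s.\ nor $|\fp-\opt|\to 0$ follows, and your passage $\EE\big[H(\ott,\oxt,\tilde Z(\fp),\fp)\big]\to H(\ot,\ox,D_x\phi(\ot,\ox,\op),\op)$ is unjustified, since $H$ is only Lipschitz in $p$ and you have no bound on $\EE\lvert\fp-\opt\rvert$.

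This is exactly the point where the paper does real work: Steps 1--3 of its proof establish the quantitative strict-convexity inequality \eqref{eq:34} (the extra term $\delta\lvert p-\opt\rvert^2$ coming from \eqref{eq:31}), upgrade it to \eqref{eq:sup14} with the indicator $\ind_{\lvert\oX_{n+1}-\oxt\rvert<\eta}$, and then feed it into the dynamic programming identity \eqref{eq:36}; after estimating the four terms $\I$--$\IV$ this yields the key bound $\EE\big[\lvert\opt-\fp_{n+1}\rvert^2\big]\le C(\delta,\eta)\,\hht$ in \eqref{eq:39}, which is what makes the Hamiltonian term converge (via \eqref{eq:sup17}--\eqref{eq:sup13}). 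To repair your argument you would either have to reproduce an estimate of this type, or genuinely prove your dichotomy at the discrete level (e.g.\ by exploiting grid-minimality of $\opt$ together with strict concavity of $V^\hp_\hht-\phi^\hp_\hht$ on the flat simplices), including the delicate near-boundary/near-face cases and the region outside $B_\eta(\oxt,\opt)$ where \eqref{eq:31} gives no strict convexity; as written, the proposal does not do this, so the supersolution property is not established.
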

	\begin{proof}
		Let $\phi:[0,\;T]\times\RR^d\times\overline{\Delta(I)}\rightarrow\RR$ be a smooth test function, such that $w-\phi$ has a strict global minimum at $(\ot,\;\ox,\;\op)$ with $(w-\phi)(\ot,\;\ox,\;\op) = 0$. We  show that $\phi$ satisfies \eqref{eq:14} at $(\ot,\;\ox,\;\op)$.
		
		There exists a sequence $(\ott,\;\oxt,\;\opt)_\hht$ such that $\ott = l_\tau\hht \in\Pi_\hht$, $l_\tau\in\mathbb{N}$ 
		converges to $\ot$, $\opt\in\mN^{h}$  converges to $\op$, and $\oxt$ to $ \ox$ for $\tau, h\rightarrow 0$; also, $ V^\hp_\hht -\phi$ has a global minimum at $(\ott,\;\oxt,\;\opt)$
		{on $\Pi_\hht \times \mathbb{R}^d\times \mN^{h}$}.
		
		Define $\phi_\hht^\hp := \phi  +(V_\hht^\hp-\phi)(\ott,\;\oxt,\;\opt)$. Then for all $x\in\RR^d$ and $p_m\in\mN^\hp$ we have
		\begin{equation}\label{eq:32}
		\begin{split}
		(V_\hht^\hp-\phi_\hht^\hp)(\ott + \hht,\;x,\;p_m)\geq (V_\hht^\hp - \phi_\hht^\hp)(\ott,\;\oxt,\;\opt) =0.
		\end{split}
		\end{equation}
		
		Set 
		\begin{equation*}
		\oX_{n + 1} := \oxt + \sigma(\ott,\;\oxt)\xi_{l_\tau}\sqrt{\hht};
		\end{equation*}
		and for each $m\in\big\{1,\ldots,M\big\}$:
		\begin{align*}
		\oZ^m_{n}(\oxt) 
		&:=\frac{1}{{\hht}}\EE\big[V_{\hht}^{\hp}(\ott + \hht,\;\oX_{n + 1}, \;p_m)(\sigma(\ott,\;\oxt))^{-T}\xi_{l_\tau}\sqrt{\hht}\big],
		\\
		\oY^m_{n}(\oxt) &:=\EE\big[V^\hp_{\hht}(\ott + \hht,\;\oX_{n+1},\;p_m)\big] + \hht H\big(\ott,\;\oxt,\;\oZ^m_{n}(\oxt),\;p_m\big).
		\end{align*}
		From the non convex data set $\mY_{n}(\oxt) := \big\{\oY^1_{n}(\oxt), \ldots, \oY^M_{n}(\oxt)\big\}$, we define
		\begin{equation*}
		V^{\hp}_{\hht}(\ott,\;\oxt,\;\opt) :=\vexp\big[\mY_{n}(\oxt)\big](\opt).
		\end{equation*}
		
		We can assume that $\lbdmin\left(\op,\;\frac{\partial^2\phi_\hht^\hp}{\partial p^2}(\ot,\;\ox,\op)\right)>0$, otherwise \eqref{eq:14} is always true. Thus, there exist $\delta,\;\eta>0$ such that for all $\hht$ small enough we have
		\begin{equation}\label{eq:31}
		\begin{split}
		&\Big\la \frac{\partial^2\phi_\hht^\hp}{\partial p^2}(t,\;x,\;p)z,\;z\Big\ra > 4\delta\lvert z\rvert^2
		\\
		&\hspace{50pt}\forall\,  (x,\;p)\in B_\eta(\oxt,\;\opt), \forall\, t\in[\ott,\;\ott+\hht],\;\forall\,z\in T_{\overline{\Delta(I)}(\opt)}.
		\end{split}
		\end{equation}
		Furthermore, we assume without loss of generality that outside of  $B_\eta(\ott,\;\oxt,\;\opt)$, $\phi_\hht^\hp$ is still convex on $\Delta(I)$. Thus for any $p_m\in\mN^{h}$ it holds
		\begin{align}\notag
		V_\hht^\hp(\ott + \hht,\;x,\;p_m)&\geq \phi_\hht^\hp(\ott + \hht,\;x,\;p_m)
		\\
		\label{eq:33}
		&\geq \phi_\hht^\hp(\ott + \hht,\;x,\;\opt) + \Big\la  \frac{\partial \phi_\hht^\hp}{\partial p}(\ott + \hht,\;x,\;\opt),\; p_m-\opt\Big\ra .
		\end{align}
		
		The rest of the proof consists of 4 steps.
		
		\medskip
		
		\begin{steps}
			\item We prove that for any $p\in\overline{\Delta(I)}$, the following inequality holds
			\begin{equation}\label{eq:34}
			\begin{split}
			\EE&\big[V_\hht^\hp(\ott + \hht,\;x,\;p)\big]
			\\
			&\hspace{10pt}\geq \EE\big[ \phi_\hht^\hp(\ott + \hht,\;x,\;\opt)\big] + \Big\la \frac{\partial \phi_\hht^\hp}{\partial p} (\ott + \hht,\;x,\;\opt),\; p-\opt\Big\ra+ \delta\lvert p -\opt\rvert^2.
			\end{split}
			\end{equation}
			
			\medskip
			
			Fix $(x,\;p)\in B_\eta(\oxt,\;\opt)$ and $t\in[\ott,\;\ott+\hht]$. Since $\phi_\hht^\hp$ is  smooth, we expand $\phi_\hht^\hp$ into a Taylor--Lagrange expansion up to the order 2 and obtain for some $a\in B_\eta(\opt)$
			\begin{equation*}
			\phi_\hht^\hp(t,\;x,\;p) = \phi_\hht^\hp(t,\;x,\;\opt) + \Big\la\frac{\partial \phi_\hht^\hp}{\partial p}(t,\;x,\;\opt),\;p-\opt\Big\ra+\frac{1}{2}\Big\la\frac{\partial^2 \phi_\hht^\hp}{\partial p^2}(t,\;x,\;a)(p-\opt),\;p-\opt\Big\ra,
			\end{equation*}
			which thanks to \eqref{eq:31} gives
			\begin{equation}
			\label{eq:45}
			\phi_\hht^\hp(t,\;x,\;p)\geq \phi_\hht^\hp(t,\;x,\;\opt) + \Big\la\frac{\partial \phi_\hht^\hp}{\partial p}(t,\;x,\;\opt),\;p-\opt\Big\ra+2 \delta\llvert p-\opt\rrvert^2.
			\end{equation}
			
			For any $p\in\overline{\Delta(I)}\setminus\Int\left(B_\eta(\opt)\right)$. We set $\tp:=\opt +\eta(p-\opt)/\llvert p-\opt\rrvert$. Since the function $V_\hht^\hp$ is convex in the variable $p$, then the subgradient of $V_\hht^\hp(\ott,\;\oxt,\;\cdot)$ at $p$, denoted by $\partial^{-}V_\hht^\hp(\ott,\;\oxt,\;p)$ is not an empty set. Let $\hat{p}\in\partial^{-}V_\hht^\hp(\ott,\;\oxt,\;\hat{p})$, we have by definition of the subgradient
			\begin{align}\label{eq:46}
			V_\hht^\hp(\ott,\;\oxt,\;p)\geq V_\hht^\hp(\ott,\;\oxt,\;\tp) + \left\la \hat{p},\;p-\tp\right\ra.
			\end{align}
			By \eqref{eq:32} we have $(V_\hht^\hp-\phi_\hht^\hp)(\ott,\;\oxt,\;\tp)\geq 0$. Since $\tp \in B_\eta(\opt)$, using the inequalities \eqref{eq:45} and \eqref{eq:46} we obtain
			\begin{align}
			\notag
			\begin{split}
			V_\hht^\hp(\ott,\;\oxt,\;p)&\geq \phi_\hht^\hp(\ott,\;\oxt,\;\opt)+\Big\la\frac{\partial \phi_\hht^\hp}{\partial p}(\ott,\;\oxt,\;\opt),\;\tp-\opt\Big\ra
			+2 \delta\llvert \tp-\opt\rrvert^2+\big\la \hat{p},\;p-\tp\big\ra
			\end{split}
			\\
			\label{eq:sup4}
			\begin{split}
			&\geq \phi_\hht^\hp(\ott,\;\oxt,\;\opt)+\Big\la\frac{\partial \phi_\hht^\hp}{\partial p}(\ott,\;\oxt,\;\opt),\;p-\opt\Big\ra
			\\
			&\hspace{120pt}+2 \delta\llvert \tp-\opt\rrvert^2+\Big\la \hat{p}-\frac{\partial \phi_\hht^\hp}{\partial p}(\ott,\;\oxt,\;\opt),\;p-\tp\Big\ra.
			\end{split}
			\end{align}
			We show that the last term in the right hand side of \eqref{eq:sup4} is positive. By taking $p=\tp$ in \eqref{eq:33} and taking $p=\opt$ in \eqref{eq:46}  we have 
			\begin{align}
			\label{eq:sup1}
			V_\hht^\hp(\ott,\;\oxt,\;\tp)&\geq V_\hht^\hp(\ott,\;\oxt,\;\opt) + \Big\la  \frac{\partial \phi_\hht^\hp}{\partial p}(\ott,\;\oxt,\;\opt), \;\tp-\opt\Big\ra,
			\\
			\label{eq:sup2}
			V_\hht^\hp(\ott,\;\oxt,\;\opt)&\geq V_\hht^\hp(\ott,\;\oxt,\;\tp) + \big\la \hat{p},\;\opt-\tp\big\ra.
			\end{align}
			We sum up \eqref{eq:sup1} and \eqref{eq:sup2}. Then with the choice of $\tp$ we made, it follows that
			\begin{align*}
			0&\geq \Big\la \hat{p} -\frac{\partial \phi_\hht^\hp}{\partial p}(\ott,\;\oxt,\;\opt),\;\opt-\tp\Big\ra = \frac{\eta}{\llvert p-\opt \rrvert}\Big\la \hat{p} -\frac{\partial \phi_\hht^\hp}{\partial p}(\ott,\;\oxt,\;\opt),\;\opt-p\Big\ra.
			\end{align*}
			Thanks to the choice of $\tilde{p}$ we note that $\left(\frac{ \llvert p-\opt\rrvert}{\llvert p-\opt\rrvert+\eta}\right)(\tp-p)= (\opt-p)$. It implies that
			\begin{equation}\label{eq:sup3}
			\Big\la \hat{p} -\frac{\partial \phi_\hht^\hp}{\partial p}(\ott,\;\oxt,\;\opt),\;p-\tp\Big\ra\geq  0.
			\end{equation}
			Thus, substituting \eqref{eq:sup3} into \eqref{eq:sup4} gives
			\begin{align*}
			V_\hht^\hp(\ott,\;\oxt,\;p)&\geq V_\hht^\hp(\ott,\;\oxt,\;\opt)+\Big\la\frac{\partial \phi_\hht^\hp}{\partial p}(\ott,\oxt
			,\opt),\;p-\opt\Big\ra+2 \delta\llvert \tp-\opt\rrvert^2
			\\
			&=V_\hht^\hp(\ott,\;\oxt,\;\opt)+\Big\la\frac{\partial \phi_\hht^\hp}{\partial p}(\ott,\oxt
			,\opt),\;p-\opt\Big\ra+2 \delta\eta^2.
			\end{align*}
			After taking the limit $\hht,\hp\to 0$, we obtain for all $p\in\Delta(I)\setminus\Int\left(B_\eta(\op)\right)$ that
			\begin{align}\label{eq:49}
			w(\ot,\;\ox,\;p)\geq w(\ot,\;\ox,\;\op)+\left\la\frac{\partial \phi}{\partial p}(\ot,\;\ox,\;\op),\;p-\op\right\ra+2 \delta\eta^2.
			\end{align}
			Next, suppose that \eqref{eq:34} does not hold for a $p\in\overline{\Delta(I)}$. Thus there exists a sequence $(\hht,\;x_{\hht},\;p_{\hht})_{\hht}$ with $p_{\hht}\in\overline{\Delta(I)}\setminus B_\eta(\opt)$ such that $(\hht,\;x_{\hht},\;p_{\hht})\to (0,\;0,\;p)$ for $\hht,\,\hp\to 0$ and
			\begin{equation}
			\label{eq:sup5}
			\begin{split}
			&V_\hht^\hp(\ot_{\hht} + \hht, \;\ox_{\hht}
			+ x_{\hht}, \;p_{\hht})
			\\
			&\hspace{10pt}< \phi_\hht^\hp(\ot_{\hht} + \hht,\;\ox_{\hht}
			+ x_{\hht},\;\op_{\hht}) 
			+ \Big\la \frac{\partial\phi_\hht^\hp}{\partial p} (\ott + \hht,\;\oxt + x_{\hht},\;\opt),\; p_{\hht}-\opt\Big\ra
			+ \delta\lvert p_{\hht} -\opt\rvert^2.
			\end{split}
			\end{equation}
			For $\hht,\,\hp\to 0$, $p\in\overline{\Delta(I)}\setminus B_\eta(\op)$ it follows from \eqref{eq:sup5} that
			\begin{equation*}
			\begin{split}
			&w(\ot,\;\ox 
			,\;p)< \phi(\ot,\;\ox
			,\;\op) + \Big\la \frac{\partial\phi}{\partial p} (\ot,\;\ox,\;\op), \;p-\op\Big\ra + \delta \lvert p-\op \rvert^2.
			\end{split}
			\end{equation*}
			which contradicts the inequality \eqref{eq:49}. Hence, \eqref{eq:34} holds.
			
			\item We prove that  for any $p_m\in\mN^\hp$ we have
			\begin{equation}\label{eq:sup14}
			\begin{split}
			&\EE\big[ V^\hp_\hht(\ott + \hht,\;\oX_{n+1},\;p_m)\big]
			\\
			&\hspace{25pt}\geq \EE\bigg[   \phi_\hht^\hp(\ott + \hht,\;\oX_{n+1},\;\opt) + \Big\la \frac{\partial \phi_\hht^\hp}{\partial p} (\ott + \hht,\;\oX_{n+1},\;\opt),\; p_m-\opt\Big\ra
			\\
			&\hspace{200pt}+ \delta\lvert p_m -\opt\rvert^2 \ind_{\lvert \oX_{n + 1} - \oxt\rvert<\eta}\bigg].
			\end{split}
			\end{equation}
			
			\medskip
			
			With the estimates \eqref{eq:34} and \eqref{eq:32} we have for $\hht$ small enough and for all $p_m\in\mN^\hp$
			{
				\begin{align}
				\notag
				\begin{split}
				\EE&\big[ V^\hp_\hht(\ott + \hht,\;\oX_{n+1},\;p_m)\big]
				\\[5pt]
				&= \EE\bigg[ V^\hp_\hht(\ott + \hht,\;\oX_{n+1},\;p_m)\ind_{\lvert \oX_{n + 1} - \oxn\rvert<\eta}\bigg] 
				\\
				&\hspace{20pt}+ \EE\bigg[ V^\hp_\hht(\ott + \hht,\;\oX_{n+1},\;p_m)\ind_{\lvert \oX_{n + 1} - \oxn\rvert\geq\eta}\bigg]
				\end{split} 
				\\
				\notag
				\begin{split}
				&\geq \EE\bigg[\Big(  \phi_\hht^\hp(\ott + \hht,\;\oX_{n + 1},\;\opt)  + \Big\la \frac{\partial \phi_\hht^\hp}{\partial p} (\ott + \hht,\;\oX_{n+1},\;\opt),\; p_m-\opt\Big\ra
				\\
				&\hspace{10pt}+ \delta\lvert p_m -\opt\rvert^2 \Big)\ind_{\lvert \oX_{n + 1} - \oxn\rvert<\eta}\bigg]
				+ \EE\bigg[ V^\hp_\hht(\ott + \hht,\;\oX_{n+1},\;p_m)\ind_{\lvert \oX_{n + 1} - \oxn\rvert\geq\eta}\bigg]
				\end{split}
				\\
				\notag
				\begin{split}
				&\geq \EE\bigg[\Big(  \phi_\hht^\hp(\ott + \hht,\;\oX_{n+1},\;\opt) + \Big\la \frac{\partial \phi_\hht^\hp}{\partial p} (\ott + \hht,\;\oX_{n+1},\;\opt),\; p_m-\opt\Big\ra
				\\
				&\hspace{10pt}+ \delta\lvert p_m -\opt\rvert^2 \Big)\ind_{\lvert \oX_{n + 1} - \oxn\rvert<\eta}\bigg]
				+ \EE\bigg[ \phi^\hp_\hht(\ott + \hht,\;\oX_{n+1},\;p_m)\ind_{\lvert \oX_{n + 1} - \oxn\rvert\geq\eta}\bigg]
				\end{split}
				\\
				\label{eq:sup24}
				\begin{split}
				&= \EE\bigg[\phi_\hht^\hp(\ott + \hht,\;\oX_{n+1},\;\opt) + \Big\la \frac{\partial \phi_\hht^\hp}{\partial p} (\ott + \hht,\;\oX_{n+1},\;\opt),\; p_m-\opt\Big\ra
				\\
				&\hspace{5pt}+ \delta\lvert p_m -\opt\rvert^2 \ind_{\lvert \oX_{n + 1} - \oxn\rvert<\eta}\bigg]
				\\
				&\hspace{5pt}+ \EE\bigg[ \Big(\phi^\hp_\hht(\ott + \hht,\;\oX_{n+1},\;p_m)-\phi_\hht^\hp(\ott + \hht,\;\oX_{n+1},\;\opt)
				\\
				&\hspace{5pt}-\bigg\la \frac{\partial \phi_\hht^\hp}{\partial p} (\ott + \hht,\;\oX_{n+1},\;\opt),\; p_m-\opt\Big\ra\Big)\ind_{\lvert \oX_{n + 1} - \oxn\rvert\geq\eta}\bigg].
				\end{split}
				\end{align}	
			}
			We recall that $\phi^\hp_\hht$ is convex in the variable $p$, which implies that
			\begin{equation}\label{eq:sup23}
			\begin{split}
			&\phi^\hp_\hht(\ott + \hht,\;\oX_{n+1},\;p_m)-\phi_\hht^\hp(\ott + \hht,\;\oX_{n+1},\;\opt)
			\\
			&\hspace{75pt}-\bigg\la \frac{\partial \phi_\hht^\hp}{\partial p} (\ott + \hht,\;\oX_{n+1},\;\opt),\; p_m-\opt\Big\ra\geq 0.
			\end{split}
			\end{equation} 
			Hence, from \eqref{eq:sup23} and \eqref{eq:sup24} the assertion \eqref{eq:sup14} holds for all $p_m\in\mN^\hp$.
			
			\item\label{item:sup3}Next we establish an estimate for $\fp_{n+1}:=\fp_{l_\tau+1}^{\oxt,\;\opt}$ where $\fp_{l_\tau+1}^{\oxt,\;\opt}$ is defined as a one step martingale as in \cref{defn:onestepfb} with initial data $(\ott,\;\oxt,\;\opt)$.
			
			\medskip
			
			Note that by \cref{lem:dpp} it holds
			\begin{equation}\label{eq:36}
			\begin{split}
			&V_\hht^\hp(\ott,\;\oxt,\;\opt) = \EE \Big[V_\hht^\hp(\ott + \hht,\;\oX_{n+1},\;\fp_{n+1}) 
			\\
			&\hspace{20pt}+ \hht H\Big(\ott,\;\oxt,\;\frac{1}{\hht}\EE\Big[ V_\hht^\hp(\ott + \hht,\;\oX_{n+1},\;\fp_{n+1})(\sigma(\ott,\;\oxt))^{-T}\xi_{n}\sqrt{\hht}\Big],\;\fp_{n+1}\Big)\Big].
			\end{split}
			\end{equation}
			
			We replace the first term of the right hand side of \eqref{eq:36} with \eqref{eq:sup14} (for $p=\fp_{n+1}$) and obtain using \eqref{eq:32} for small enough $\hht,\, \hp>0$ that
			\begin{align}
			\notag
			\begin{split}
			0&\geq  \EE\Big[ \phi_\hht^\hp(\ott + \hht,\;\oX_{n+1},\;\opt)-\phi_\hht^\hp(\ott,\;\oxt,\;\opt)\Big]
			\\
			&\qquad+\hht\EE\Big[H\Big(\ott,\;\oxt,\;\frac{1}{\hht}\EE\Big[ V_\hht^\hp(\ott + \hht,\;\oX_{n+1},\;\fp_{n+1})(\sigma(\ott,\;\oxt))^{-T}\xi_{n}\sqrt{\hht}\Big],\;\fp_{n+1}\Big)\Big]
			\\
			&\qquad+\EE \Big[\Big\la\frac{\partial\phi_\hht^\hp}{\partial p} (\ott + \hht,\;\oX_{n+1},\;\opt), \;\fp_{n+1}-\opt\Big\ra\Big] + \delta\EE\Big[\ind_{\lvert \oX_{n+1} - \oxt\rvert<\eta}\lvert \opt -\fp_{n+1}\rvert^2\Big]
			\end{split}
			\\
			\label{eq:38}
			&\qquad=: \I + \II + \III + \IV.
			\end{align}
			We estimate the right-hand side of \eqref{eq:38}.
			
			We note that since $\phi$ is smooth, from the Taylor expansion it follows
			\begin{equation}\label{eq:sup6}
			\begin{split}
			&\EE \big[\phi(\ott +\hht,\;\oX_{n+1},\;\opt)-\phi(\ott,\;\oxt,\;\opt)\big] 
			\\
			&\hspace{25pt}=  \big[\dt\phi(\ott,\;\oxt,\;\opt)
			+\frac12\Tr\big(\sigma\sigma^T(\ott,\;\oxt)D^2_x \phi(\ott,\;\oxt,\;\opt)\big)\big]\hht + \hht \mO(\hht)
			\\
			&\hspace{25pt}\leq C\hht + \hht \mO(\hht).
			\end{split}
			\end{equation}
			Hence, using \eqref{eq:sup6} we obtain
			\begin{equation}
			\label{eq:sup7}
			\begin{split}
			\I &\leq C\hht +\hht \mO(\hht).
			\end{split}
			\end{equation}
			
			Using the same arguments leading to (\ref{eq:xlip26}), (\ref{eq:xlip25}) we estimate the second term as
			\begin{equation}\label{eq:sup8}
			\II \leq C\hht  \Big(1 + \frac{1}{\hht}\EE\Big[ V_\hht^\hp(\ott + \hht,\;\oX_{n+1},\;\fp_{n+1})(\sigma(\ott,\;\oxt))^{-T}\xi_{n}\sqrt{\hht}\Big]\Big)\leq C\hht.
			\end{equation}
			
			Due to the independence of $\fp_{n+1}$ and $\xi_{l_\tau}$ and by the martingale property $\EE\big[\fp_{n+1}\big] = \opt$, we have
			\begin{equation}
			\label{eq:sup9}
			\III\leq \Big\la\EE  \Big[\frac{\partial V_\hht^\hp}{\partial p} (\ott + \hht,\;\oX_{n+1},\;\opt)\Big],\; \EE\Big[\fp_{n+1}-\opt\Big] \Big\ra= 0.
			\end{equation}
			
			By the Markov's inequality it follows
			\begin{align}
			\notag
			\IV&=\delta \PP \Big[{\lvert \sigma(\ott,\;\oxt)\xi_{l_\tau}\sqrt{\hht} \vert<\eta}\Big]\EE\Big[\lvert \opt -\fp_{n+1}\rvert^2\Big]
			\\
			\notag
			&\geq  \delta\Big(1-\frac{1}{\eta}\EE\Big[\lvert \sigma(\ott,\;\oxt)\xi_{l_\tau}\sqrt{\hht}\vert\Big] \Big)\EE\Big[\lvert \opt -\fp_{n+1}\rvert^2\Big]
			\\
			\label{eq:sup10}
			&\geq  C(\delta,\;\eta)\left(1-\hht^{1/2}\right)\EE\Big[\lvert \opt -\fp_{n+1}\rvert^2\Big].
			\end{align}
			
			We substitute the estimates \eqref{eq:sup7}, \eqref{eq:sup8}, \eqref{eq:sup9}, and \eqref{eq:sup10} obtained for $\I$, $\II$, $\III$, and $\IV$ into \eqref{eq:38}. And get
			\begin{equation*} 
			\EE\Big[\lvert \opt -\fp_{n+1}\rvert^2\Big]\leq C(\delta,\;\eta)\left(\frac{ \hht }{1-\hht^{1/2}}\right).
			\end{equation*}
			For $0<\hht<1/2$ small enough we have 
			\begin{equation}\label{eq:39}
			\EE\Big[\lvert \opt -\fp_{n+1}\rvert^2\Big]\leq C(\delta,\;\eta)\hht.
			\end{equation}
			
			\item In last step we show that
			\begin{equation*}
			0\geq \dt\phi(\ot,\;\ox,\;\op)
			+\frac12\Tr\big(\sigma\sigma^T(\ot,\;\ox)D^2_x \phi(\ot,\;\ox,\;\op)\big) + H\big(\ot,\;\ox,\;D_x\phi(\ot,\;\ox,\;\op),\;\op\big).
			\end{equation*}	
			
			\medskip
			
			By \cref{lem:dpp} it holds
			\begin{equation}\label{eq:37}
			\begin{split}
			&V_\hht^\hp(\ott,\;\oxt,\;\opt) = \EE \bigg[V_\hht^\hp(\ott + \hht,\;\oX_{n+1},\;\fp_{n+1}) 
			\\
			&\hspace{10pt}+ \hht H\Big(\ott,\;\oxt,\;\frac{1}{\hht}\EE\Big[ V_\hht^\hp(\ott + \hht,\;\oX_{n+1},\;\fp_{n+1})(\sigma(\ott,\;\oxt))^{-T}\xi_{l_\tau}\sqrt{\hht}\Big],\;\fp_{n+1}\Big)\bigg].
			\end{split}
			\end{equation}
			{
				We recall {\eqref{eq:32}} and apply \cref{lem:monotonicity} for $\phi_1(\cdot)=V^\hp_\hht(\ott + \hht,\;\cdot,\;p_m)$ and $\phi_2(\cdot)=\phi_\hht^\hp(\ott + \hht,\;\cdot,\;p_m)$  to get
				\begin{equation}\label{eq:25}
				\begin{split}
				&\EE \Big[V_\hht^\hp(\ott + \hht,\;\oX_{n+1},\;p_m) \Big]
				\\
				&\hspace{5pt}+ \hht H\Big(\ott,\;\oxt,\;\frac{1}{\hht}\EE\Big[ V_\hht^\hp(\ott + \hht,\;\oX_{n+1},\;p_m)(\sigma(\ott,\;\oxt))^{-T}\xi_{l_\tau}\sqrt{\hht}\Big],\;p_m\Big)
				\\
				&\geq  \EE\Big[ \phi_\hht^\hp(\ott + \hht,\;\oX_{n+1},\;p_m)\Big]
				\\
				&\hspace{5pt}+\hht H\Big(\ott,\;\oxt,\;\frac{1}{\hht}\EE\Big[ \phi_\hht^\hp(\ott + \hht,\;\oX_{n+1},\;p_m)(\sigma(\ott,\;\oxt))^{-T}\xi_{l_\tau}\sqrt{\hht}\Big],\;p_m\Big)-C\hht\sqrt{\hht}\,,
				\end{split}
				\end{equation}
				for all $p_m\in\mathcal{N}^h$.
				On noting that $\fp_{n+1}$ is a $\mathcal{N}^h$-value random variable which is independent from $\oX_{n+1}$, $\xi_{l_\tau}$ we deduce from \eqref{eq:25} that
				\begin{equation}\label{eq:sup15}
				\begin{split}
				&\EE \bigg[V_\hht^\hp(\ott + \hht,\;\oX_{n+1},\;\fp_{n+1}) 
				\\
				&+ \hht H\Big(\ott,\;\oxt,\;\frac{1}{\hht}\EE\Big[ V_\hht^\hp(\ott + \hht,\;\oX_{n+1},\;\fp_{n+1})(\sigma(\ott,\;\oxt))^{-T}\xi_{l_\tau}\sqrt{\hht}\Big],\;\fp_{n+1}\Big)\bigg]
				\\
				&\geq  \EE\bigg[ \phi_\hht^\hp(\ott + \hht,\;\oX_{n+1},\;\fp_{n+1})
				\\
				&+\hht H\Big(\ott,\;\oxt,\;\frac{1}{\hht}\EE\Big[ \phi_\hht^\hp(\ott + \hht,\;\oX_{n+1},\;\fp_{n+1})(\sigma(\ott,\;\oxt))^{-T}\xi_{l_\tau}\sqrt{\hht}\Big],\;\fp_{n+1}\Big)\bigg]
				\\
				&-C\hht\sqrt{\hht}.
				\end{split}
				\end{equation}
				Then, from \eqref{eq:37} and \eqref{eq:sup15}, it follows
				\begin{equation}\label{eq:sup19}
				\begin{split}
				&V_\hht^\hp(\ott,\;\oxt,\;\opt)\geq  \EE\bigg[ \phi_\hht^\hp(\ott + \hht,\;\oX_{n+1},\;\fp_{n+1})
				\\
				&\hspace{5pt}+\hht H\Big(\ott,\;\oxt,\;\frac{1}{\hht}\EE\Big[ \phi_\hht^\hp(\ott + \hht,\;\oX_{n+1},\;\fp_{n+1})(\sigma(\ott,\;\oxt))^{-T}\xi_{l_\tau}\sqrt{\hht}\Big],\;\fp_{n+1}\Big)\bigg]
				\\
				&\hspace{5pt}-C\hht\sqrt{\hht}.
				\end{split}
				\end{equation}
				By using in \eqref{eq:sup19} the last equality on the right hand side of \eqref{eq:32}, as well as the definition of $\phi_\hht^\hp$ (see above \eqref{eq:32}), we obtain
				\begin{equation}\label{eq:sup21}
				\begin{split}
				\phi(\ott,&\;\oxt,\;\opt)\geq  \EE\bigg[ \phi(\ott + \hht,\;\oX_{n+1},\;\fp_{n+1})
				\\
				&+\hht H\Big(\ott,\;\oxt,\;\frac{1}{\hht}\EE\Big[ \phi_\hht^\hp(\ott + \hht,\;\oX_{n+1},\;\fp_{n+1})(\sigma(\ott,\;\oxt))^{-T}\xi_{l_\tau}\sqrt{\hht}\Big],\;\fp_{n+1}\Big)\bigg]
				\\
				&-C\hht\sqrt{\hht}.
				\end{split}
				\end{equation}
			}
			
			The stochastic process $\fp_{n+1}$ is independent of $\oX_{n + 1}$ by construction. Since $\phi^\hp_\hht$ is convex and arbitrarily smooth, thanks to \eqref{eq:33} we obtain 
			\begin{align}
			\notag
			\EE\Big[\phi(\ott +& \hht,\;\oX_{n+1},\;\fp_{n+1})\Big]
			\\
			\notag
			&\geq\EE\Big[\phi(\ott + \hht,\;\oX_{n+1},\;\opt)\Big] + \Big\la\EE\Big[\frac{\partial \phi}{\partial p}(\ott + \hht,\;\oX_{n+1},\;\opt)\Big],\;\EE\Big[ \fp_{n+1}-\opt\Big]\Big\ra
			\\
			\label{eq:sup11}
			& = \EE\Big[\phi(\ott + \hht,\;\oX_{n+1},\;\opt)\Big].
			\end{align}
			
			Furthermore, by the Taylor expansion in $x$ and since the stochastic processes $\fp_{n+1}$ satisfies to the martingale property $\EE\big[\fp_{n+1}\big] = p_m$ and is independent of $\xi_{l_\tau}$, we obtain
			\begin{align}
			\notag
			&\frac{1}{\hht}\EE\Big[ \phi_\hht^\hp(\ott + \hht,\;\oX_{n+1},\;\fp_{n+1})(\sigma(\ott,\;\oxt))^{-T}\xi_{l_\tau}\sqrt{\hht}\Big]
			\\\notag
			\begin{split}
			&\hspace{7pt}= \frac{1}{\hht}\EE\Big[ \phi_\hht^\hp(\ott + \hht,\;\oxt,\;\fp_{n+1})(\sigma(\ott,\;\oxt))^{-T}\xi_{l_\tau}\sqrt{\hht}\Big] + \EE\Big[ D_x\phi_\hht^\hp(\ott + \hht,\;\oxt,\;\fp_{n+1})\Big] 
			\\
			&\hspace{140pt}+ \EE\big[(\sigma(\ott,\;\oxt))\xi_{l_\tau}(\sigma(\ott,\;\oxt))^{-T}\xi_{l_\tau}\big]\mO(\hht^{1/2})
			\end{split}
			\\
			\label{eq:sup12}
			& \hspace{7pt}= \EE\Big[ D_x\phi_\hht^\hp(\ott + \hht,\;\oxt,\;\fp_{n+1})\Big] + \mO(\hht^{1/2}).
			\end{align}
			We substitute \eqref{eq:sup11} and \eqref{eq:sup12} into \eqref{eq:sup21} to get
			\begin{equation}\label{eq:sup16}
			\begin{split}
			&0\geq \EE\bigg[\phi(\ott + \hht,\;\oX_{n+1},\;\opt) -\phi(\ott,\;\oxt,\;\opt)
			\\
			&\hspace{20pt}+\hht H\Big(\ott,\;\oxt,\;\EE\big[ D_x\phi_\hht^\hp(\ott + \hht,\;\oxt,\;\fp_{n+1})\big] + \mO(\hht^{1/2}),\;\fp_{n+1}\Big)\bigg]-C\hht\sqrt{\hht}.
			\end{split}
			\end{equation}
			
			Since $\phi$ is arbitrarily smooth, we can assume that $D_x\phi^\hp_\hht$ is Lipschitz continuous in $p$ which with \eqref{eq:39} imply that
			\begin{align}\label{eq:sup17}
			\notag
			&\EE\Big[\Big\lvert  D_x\phi_\hht^\hp(\ott + \hht,\;\oxt,\;\fp_{n+1}) -  D_x\phi_\hht^\hp(\ott + \hht,\;\oxt,\;\opt)\Big\rvert\Big]
			\\
			&\hspace{20pt}\leq C\EE\big[\lvert \fp_{n+1}-\opt\rvert\big]\leq C\Big(\EE\big[\lvert \fp_{n+1}-\opt\rvert^2\big]\Big)^{1/2}\leq C\hht^{1/2}.
			\end{align}	
			Combining \eqref{eq:4} and \eqref{eq:sup17} we have
			\begin{align}
			\notag
			&H\Big(\ott,\;\oxt,\; D_x\phi_\hht^\hp(\ott + \hht,\;\oxt,\;\opt),\;\opt\Big)
			\\\notag
			\begin{split}
			&\hspace{5pt}\leq 	\EE\Big[H\Big(\ott,\;\oxt,\;\EE\big[ D_x\phi_\hht^\hp(\ott + \hht,\;\oxt,\;\fp_{n+1})\big] + \mO(\hht^{1/2}),\;\fp_{n+1}\Big)\Big]
			\\
			&\hspace{20pt}+ \EE\Big[\Big\lvert  D_x\phi_\hht^\hp(\ott + \hht,\;\oxt,\;\fp_{n+1}) -  D_x\phi_\hht^\hp(\ott + \hht,\;\oxt,\;\opt)\Big\rvert\Big]  
			\\
			&\hspace{30pt}+ C\EE\big[\lvert \fp_{n+1}-\opt\rvert\big]+ \mO(\hht^{1/2})
			\end{split}
			\\\label{eq:sup13}
			&\leq  \EE\Big[H\Big(\ott,\;\oxt,\;\EE\big[D_x\phi(\ott + \hht,\;\oxt,\;\fp_{n+1})\big]  +\mO(\hht^{1/2}),\; \fp_{n+1}\Big)\Big]+ C\tau^{1/2} +\mO(\hht^{1/2}).
			\end{align}
			By the Taylor expansion we have
			\begin{equation}\label{eq:sup18}
			\begin{split}
			&\phi(\ott +\hht,\;\oX_{n+1},\;\opt) 
			\\
			&\hspace{5pt}=\phi(\ott,\;\oxt,\;\opt)+\Big[\dt\phi(\ott,\;\oxt,\;\opt)+\frac12\Tr\big(\sigma\sigma^T(\ott,\;\oxt)D^2_x \phi(\ott,\;\oxt,\;\opt)\big)\Big]\hht
			\\
			&\hphantom{=\phi(\ott,\;\oxt,\;\opt)+}+ \big[D_x\phi(\ott,\;\oxt,\;\opt)
			\big] \sigma(\ott,\;\oxt)\xi_{l_\tau}\sqrt{\hht} + \hht \mO(\hht) + \hht \mO(\hht^{1/2}).
			\end{split}
			\end{equation}
			Thanks to \eqref{eq:sup13} and \eqref{eq:sup18}, we can derive from \eqref{eq:sup16} that
			\begin{equation} \label{eq:sup20}
			\begin{split}
			C&\hht^{1/2}+ \mO(\hht^{1/2})+ \hht \mO(\hht^{1/2})\geq \dt\phi(\ott,\;\oxt,\;\opt)
			\\
			&+\frac12\Tr\big(\sigma\sigma^T(\ott,\;\oxt)D^2_x \phi(\ott,\;\oxt,\;\opt)\big) +   H\big(\ott,\;\oxt,\;D_x\phi(\ott + \hht,\;\oxt,\;\opt),\;\opt\big).
			\end{split}
			\end{equation}
			Since $(\ott,\;\oxt,\;\opt)\to (\ot,\;\ox,\;\op)$ for $\hht,\,\hp\to 0 $, it follows from \eqref{eq:sup20} that 
			\begin{equation*}
			0\geq \dt\phi(\ot,\;\ox,\;\op)
			+\frac12\Tr\big(\sigma\sigma^T(\ot,\;\ox)D^2_x \phi(\ot,\;\ox,\;\op)\big) + H\big(\ot,\;\ox,\;D_x\phi(\ot,\;\ox,\;\op),\;\op\big)
			\end{equation*}
			which concludes the proof.
		\end{steps}
	\end{proof}
	
	\section{Implementation and Computational studies}\label{sec_comput}
	In this section we present an implementable fully discrete version of Algorithm~\ref{algo:algo} where the discretization in the spatial variable 
	is realized via piecewise linear interpolation over a simplicial partition of the spatial domain.
	We also perform numerical simulations to demonstrate the properties of the proposed scheme.
	
	\subsection{Implementable full discretization}\label{sec_alg_impl}
	{For simplicity we describe the algorithm for the case of a bounded spatial domain $\mathcal{D} \subset \RR^d$.
		Let $\mT^{\hx}$ be a regular partition of $\mathcal{D}$ into open simplices $S$ with mesh size $\hx=\max_{S\in\mT^{\hx}} \{\diam (S)\}$ and
		denote the set of ``grid'' nodes of $\mT^{\hx}$ as $\mX:= \{x_1,\ldots,x_L\}$. The piecewise linear Lagrange basis associated with the partition
		$\mT^{\hx}$ is denoted as $\big\{\varphi_\ell\big\}_{\ell=1}^L$.
		
		Below we denote $\sigma_{n,\ell}:=\sigma(\tn,\;\xl)$, $\sigma^{-T}_{n,\ell}:=(\sigma^T(\tn,\;\xl))^{-1}$ and introduce
		the restriction of (\ref{xeuler1}) on the grid nodes $x_\ell\in \mX$ as
		\begin{equation*}
		\oX_{n + 1}^\ell:= \oX_{n + 1}^{\xl} = \xl +  \sigma_{n,\ell}\xi_{n}\sqrt{\hht} \qquad \ell=1,\dots,L.
		\end{equation*}
		The fully discrete algorithm computes the numerical approximation at the nodes $x_\ell \in \mT^{\hx}$, $\ell=1,\dots, L$. In general, the values $\oX_{n + 1}^\ell$
		do not coincide with the nodes $\mT^{\hx}$ and we obtain the intermediate value of the solution by
		linear interpolation over the simplicial mesh $\mT^{\hx}$.
		Given the fully discrete solution $\big\{V^{m,\ell}_{n}\big\}_{\ell=1}^L$ at $t_n$, $p_m$, $\{x_\ell\}_{\ell=1}^L$ its piecewise linear interpolant on $\mT^{\hx}$
		is expressed in terms of the piecewise linear Lagrange basis functions as
		\begin{equation}\label{interpol_x}
		V^{m,\hx}_{n}(x) = \sum_{i=1}^{L} V^{m,l}_{n}\varphi_l(x) \qquad x \in \mathbb{R}^d.
		\end{equation}
	}
	Hence, we obtain the following fully discrete version of Algorithm~\ref{algo:algo}.
	{
		\begin{algo}\label{algo:algo2}
			For $\xl\in\mX$, $\ell=1,\dots,L$  set $V_N^{m,\;\ell} = \la p_m,\;g(x_\ell)\ra$ for $p_m\in\mN^h$, $m= 1,\ldots,M$ and proceed for $n=N-1,\ldots,0$ as follows:
			\begin{enumerate}
				\item Forward step: for $x_\ell\in\mX$, $\ell = 1,\ldots,L$ compute:
				\begin{equation*}
				\oX_{n + 1}^\ell = \xl + \sigma_{n,\;\ell}\xi_{n}\sqrt{\hht};
				\end{equation*}
				\item Backward step: for $x_\ell\in\mX$, $\ell = 1,\ldots,L$ and $m = 1,\ldots,M$ set:
				\begin{align*}
				\oZ^{m,\;\ell}_{n} 
				&=\frac{1}{{\hht}}\EE\big[V^{m,\;\hx}_{n + 1}(\oX^{\ell}_{n + 1})\sigma^{-T}_{n,\;\ell}\xi_{n}\sqrt{\hht}\big],
				\\
				\oY^{m,\;\ell}_{n} &=\EE\big[V^{m,\;\hx}_{n + 1}(\oX^{\ell}_{n+1})\big] + \hht H\big(\tn,\;\xl,\;\oZ^{m,\;\ell}_{n},\;p_m\big);
				\end{align*}
				\item Convexification: for $\ell = 1,\ldots,L\}$ compute the discrete lower convex envelope $\big\{V^{1,\;\ell}_{n}, \ldots, V^{M ,\;\ell}_{n}\big\}$
				of $\{\oY^{1,\;\ell}_{n}, \ldots, \oY^{M,\;\ell}_{n}\}$ as:
				\begin{equation*}
				V^{m,\;\ell}_{n} =\vexp\big[\oY^{1,\;\ell}_{n}, \ldots, \oY^{M,\;\ell}_{n}\big](p_m),\quad m=1,\ldots,M.
				\end{equation*}
			\end{enumerate}
	\end{algo}}
	
	{
		There exist several efficient algorithms to compute the discrete lower convex envelope in step $(3)$ of the above algorithm. For $I=2$ one can directly solve the minimization problem \eqref{eq:vex0} for $m=1,\dots,M$,
		the corresponding algorithm is called Jarvis's march. For $I > 2$, where the direct minimization via \eqref{eq:vex0} becomes inefficient, one can employ more efficient convex hull algorithm such as
		the beneath-beyond or divide-and-conquer algorithms,
		or the Quickhull algorithm, cf. \cite{preparata1985computational} and \cite{barber1996the}. 
		
	}

	\subsection{Numerical experiments}
	In the numerical experiments below we take $I = 2$, $d = 1$, $T = 0.5$. 
	{We eliminate one probability variable from the solution by parametrizing $\Delta(2) = (p,\;1-p)$ for $p\in(0,\;1)$ and consider the transformed solution $V:=V(\cdot,\cdot,p)$ for $p\in(0,1)$.}
	We set $\sig(x) = \sigma_0 x(1-x)$, ($\sigma_0 >0$), and $H(x,\;p)=\sin(2\pi p)\cos(5\pi x) - \cos(5\pi p )\sin(3\pi x)$
	and consider a simplified version of (\ref{eq:1})
	\begin{equation}\label{eq:num1}
	\min\Big\{ \dt V + \tfrac{1}{2}\sig^2(x)\frac{\partial^2 V}{\partial x^2} + H(x,\;p),\;\lambda_{\min}\left(p, \frac{\partial^2 V}{\partial p^2}\right)\Big\} =0\, .
	\end{equation}
	Due to the choice of the diffusion $\sigma$ we may restrict the spatial domain to the interval $[0,1]$ which is partitioned uniformly into line segments, i.e., $\mathcal{T}^{\Delta x} = \{(x_{\ell-1}, x_{\ell})\}_{\ell=1}^{L}$, $x_\ell=\ell\Delta x$
	with the mesh size $\Delta x = 1/L$. Similarly, we partition the probability domain $[0,1]$ uniformly into segments with mesh size $h=1/M$ and nodes $p_m=mh$, $m=0,\dots,M$ and the time interval $[0,T]$
	with time-step size $\tau = 1/N$, $t_n=n\tau$, $n=1,\dots,N$.
	
	In the considered case, \cref{algo:algo2} has a particularly simple form, where we denote $\sigma_{\ell}:=\sigma(\xl)$ and express the expectations in step $(1)$ below explicitly
	since $\xi_n=\pm 1$. 
	\begin{algo}\label{algo:algo3}
		For $\ell=0,\dots, L$ set
		$V^{m,\;\ell}_N: = \la p_m,\;g(\xl)\ra$
		%
		{and $n  = N-1,\ldots,0$ proceed as follows}: 
		\begin{enumerate}
			\item For $\ell=0,\ldots,L$, $m=0,\ldots,M$:
			\begin{equation*}
			\oY^{m,\;\ell}_{n} =\frac{V^{m,\;\hx}_{n + 1}(\xl + \sigma_{\ell}\sqrt{\hht}) + V^{m,\;\hx}_{n + 1}(\xl - \sigma_{\ell}\sqrt{\hht})}{2} + \hht H(\xl,\;p_m);
			\end{equation*}
			\item For $\ell=0,\ldots,L$, $m = 1,\ldots,M-1$ compute:
			\begin{align*}
			\displaystyle V^{m,\; \ell }_n &=\min_{k\in\{1,\ldots,M-m\}}  \Big\{(1-\tfrac{1}{k})\oY^{m,\;\ell}_{n} + \tfrac{1}{k}\oY^{m + k,\;\ell}_{n}\Big\}.
			\end{align*}
		\end{enumerate}
	\end{algo}

	The numerical solution computed for $\sigma_0 = 0.5$, $N = 25$, $L = 100$, and $M = 100$ is displayed in \cref{fig:vexV}.
	
	\begin{figure}[hbtp!]
		\centering
		\subfloat[\label{fig:vexV}Obtacle on]{\includegraphics[width= 0.4\textwidth]{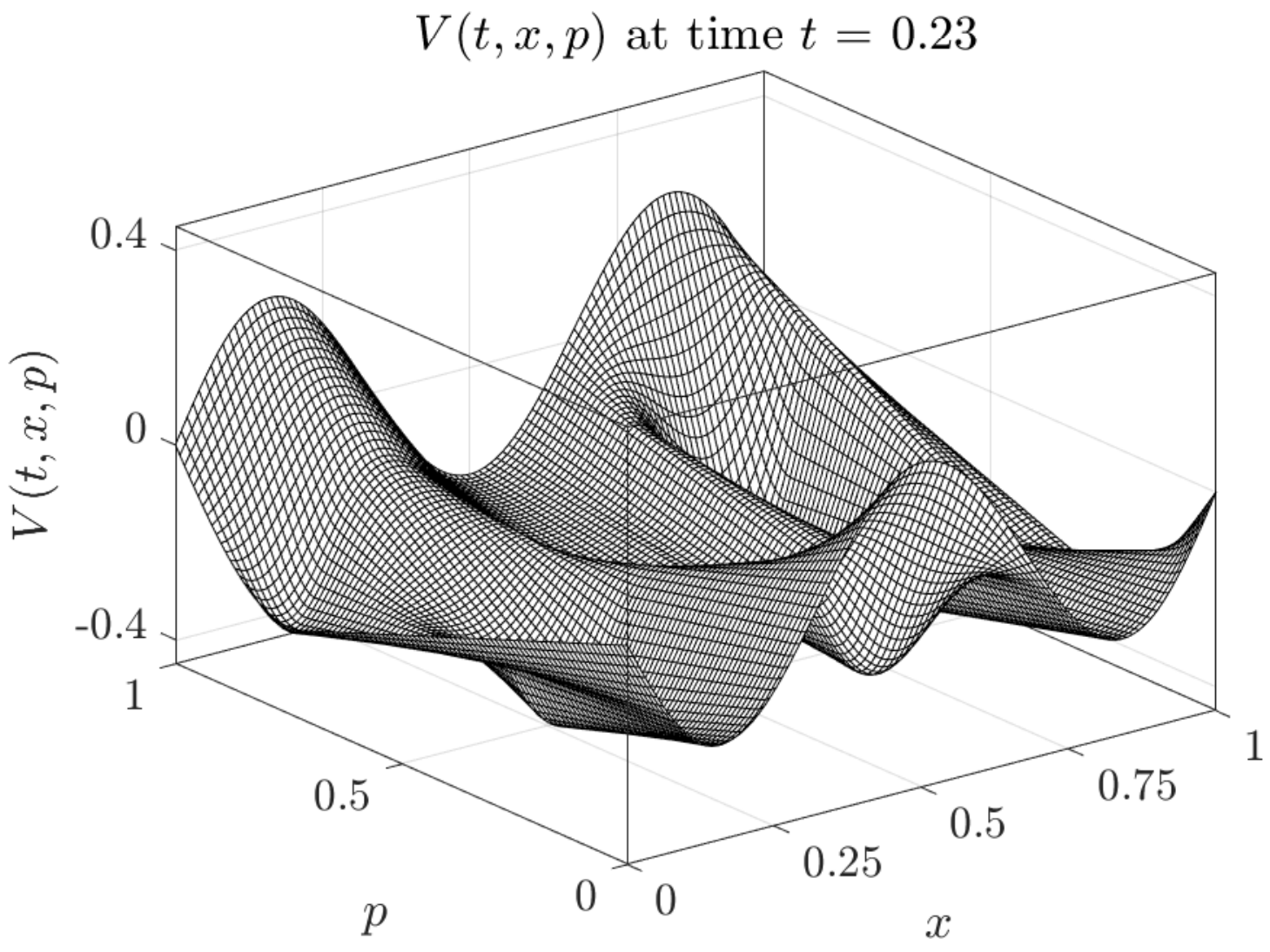}}
		\quad
		\subfloat[\label{fig:nonvexV}Obstacle off]{\includegraphics[width= 0.4\textwidth]{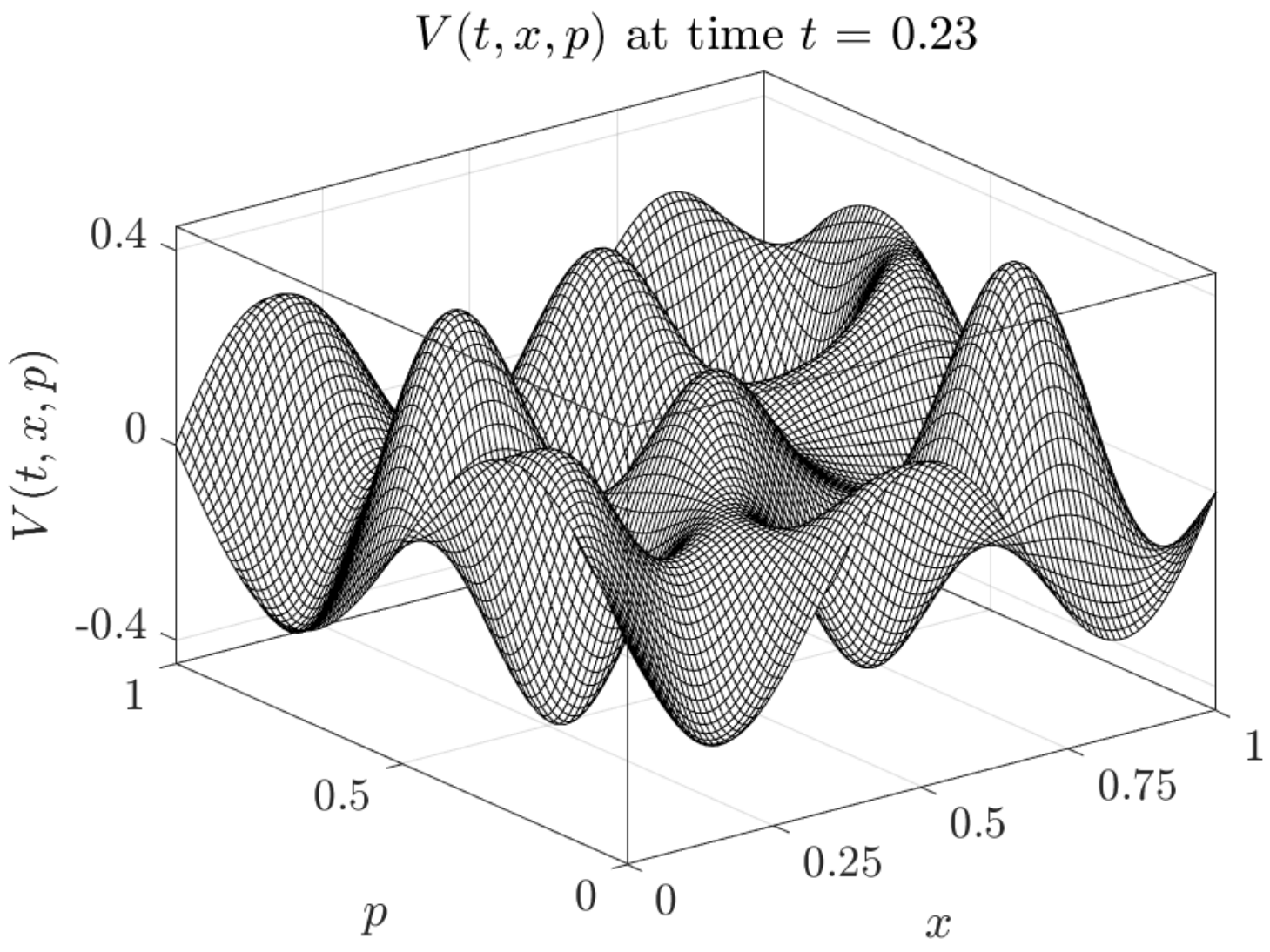}}
		\caption{(left) Numerical solution of \eqref{eq:num1} computed with \cref{algo:algo3}. (right) Numerical solution of \eqref{eq:num1} without the obstacle term $\lambda_{\min}\big(p, \frac{\partial^2 V}{\partial p^2}\big)$,
			computed with \cref{algo:algo3} without step $(2)$.}
	\end{figure}
	\begin{figure}[hbtp!]
		\centering
		\subfloat[\label{fig:Vprofile1}$(t,\;x) = (0.23,\;0.25)$]{\includegraphics[width= 0.3\textwidth]{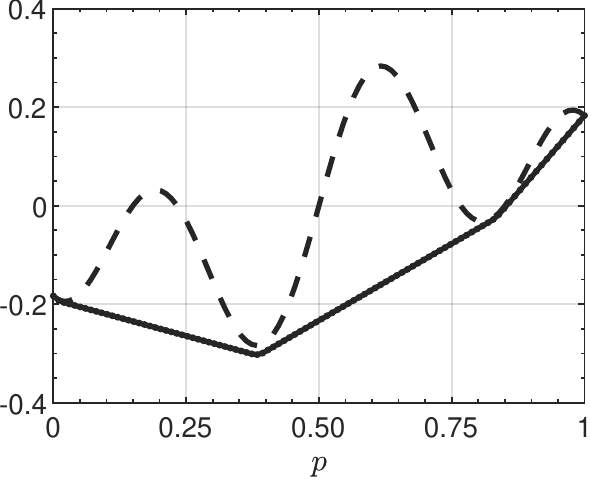}}
		\hspace{2pt}
		\subfloat[\label{fig:Vprofile2}$(t,\;x) = (0.23,\;0.50)$]{\includegraphics[width= 0.3\textwidth]{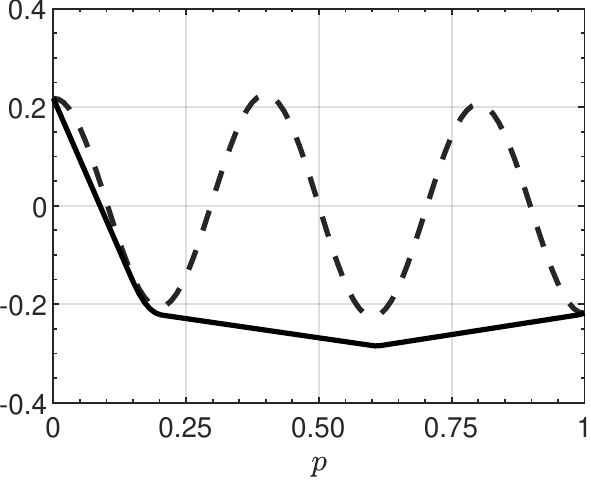}}
		\hspace{2pt}
		\subfloat[\label{fig:Vprofile3}$(t,\;x) = (0.23,\;0.75)$]{\includegraphics[width= 0.3\textwidth]{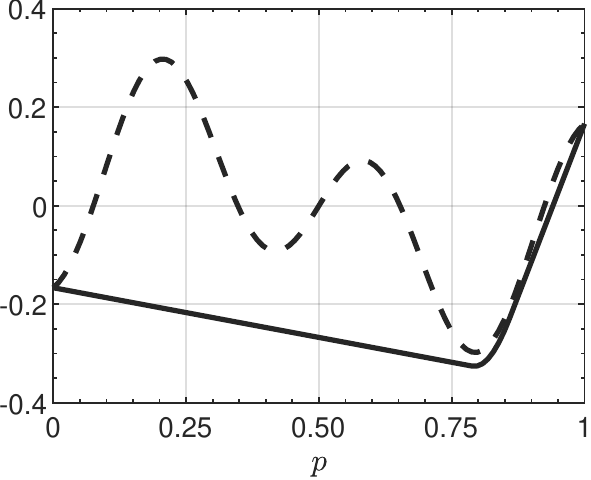}}
		\caption{Cross-section of the numerical solution in \cref{fig:vexV}   (solid line) and in \cref{fig:nonvexV} (dashed line) at $x = 0.5,\;0.50,\;0.75$.}
	\end{figure}
	
	{Since no analytic solution is known we determine the experimental order of convergence by using a reference solution
		$V_{\hht_{\reff}}$ which is computed for small discretization parameters $\hht_{\reff} = 1/384$ and $h = \hx_{\reff} = 1/1024$.}
	
	To study the error in the spatial discretization we fix $\hht = 1/50$, $\hp = 1/1024$
	and  vary $\hx = 1/L$ for $ L = 15, \;30,\; 60,\; 150,\; 300,\;600$. 
	The maximum error over all $x_\ell\in \mathcal{T}^\hx$ at $(t,\;p) = (0,\; 0.5)$ plotted against $\hx$ is displayed in \cref{fig:hx}.
	{We observe that the convergence in $\hx$ is roughly of first order.}
	
	Next, we study the error in the discretization in $p$.
	We fix $\hht = 1/50$, $\hx = 1/1024$ and $\hp=1/M$ for
	$ M = 15, \;30,\; 60,\; 150,\; 300,\;600$. 
	The maximum error over all $p_m\in \mathcal{N}_h$ at $(t,\;x) = (0,\; 0.5)$ plotted against $\hp$ is displayed in \cref{fig:hp}.
	Similarly as for the spatial discretization, we observe quadratic convergence in $\hp$.
	
	Finally, to study the error if the time-discretization
	we fix $\hx = \hp = 1/1024$ and vary $\hht = 1/N$ for $ N = 3,\; 6,\; 12,\; 24,\;48,\;86$. 
	The maximum error over all tile levels $t_n$, $n=1,\dots,N$ at $(x,\;p) = (0.5,\; 0.5)$ plotted against $\hp$ is displayed in \cref{fig:ht}.
	The convergence of the discretization with respect to $\hht$ is of linear order.
	
	\begin{figure}[hbtp!]
		\centering
		\subfloat[\label{fig:hx}]{\includegraphics[width= 0.29\textwidth]{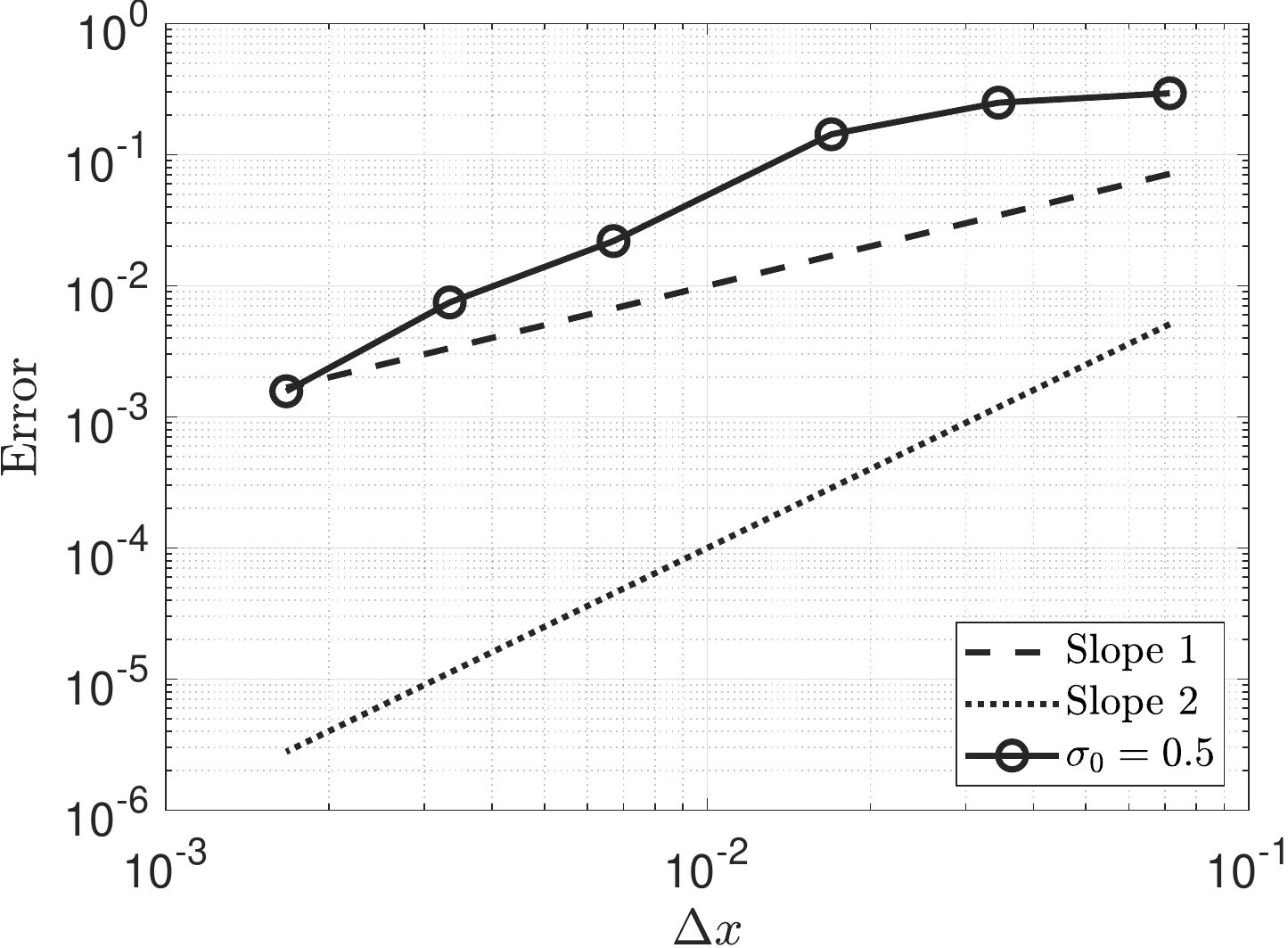}}
		\qquad
		\subfloat[\label{fig:hp}]{\includegraphics[width = 0.29\textwidth]{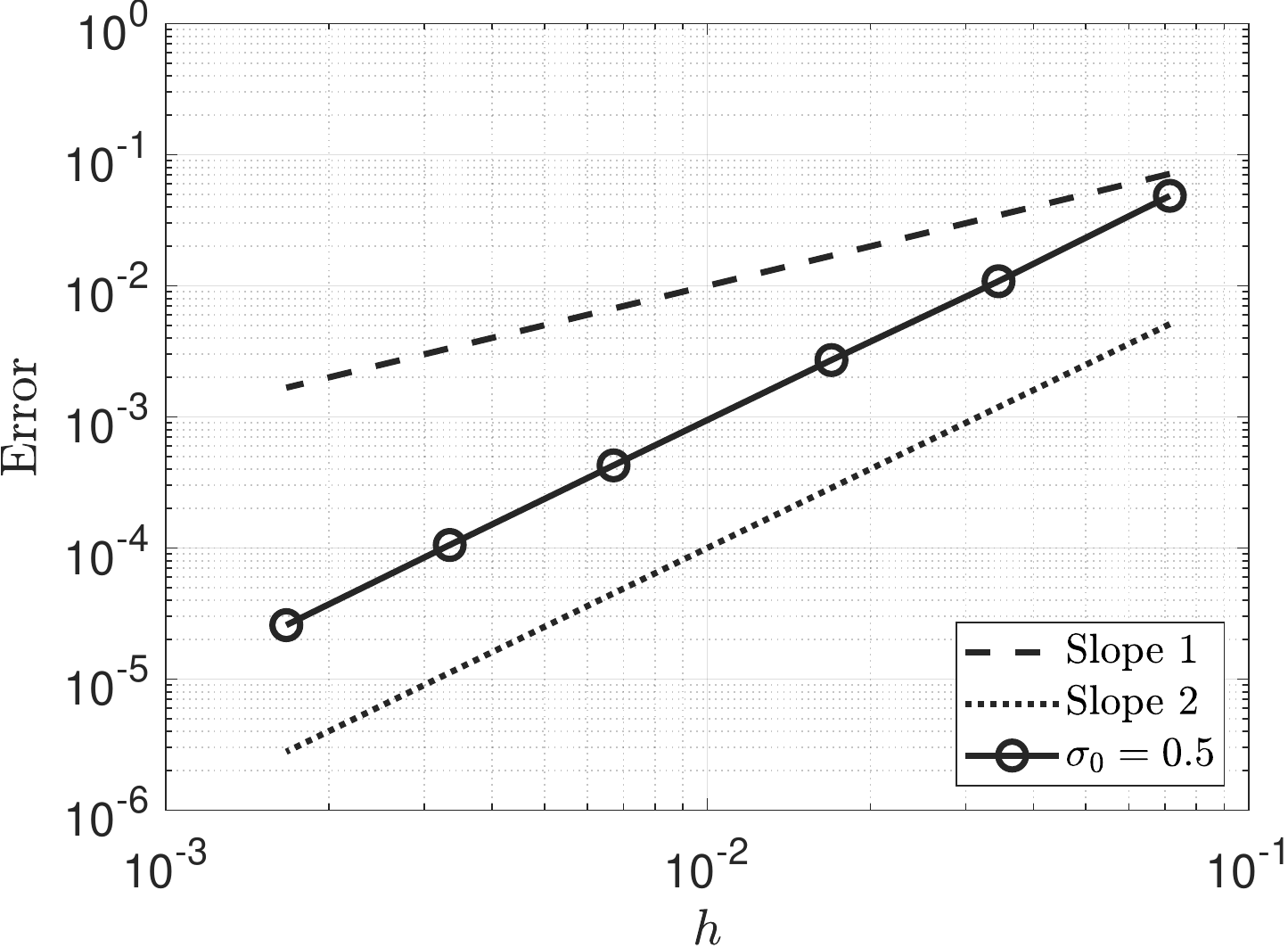}}
		\qquad
		\subfloat[\label{fig:ht}]{\includegraphics[width = 0.29\textwidth]{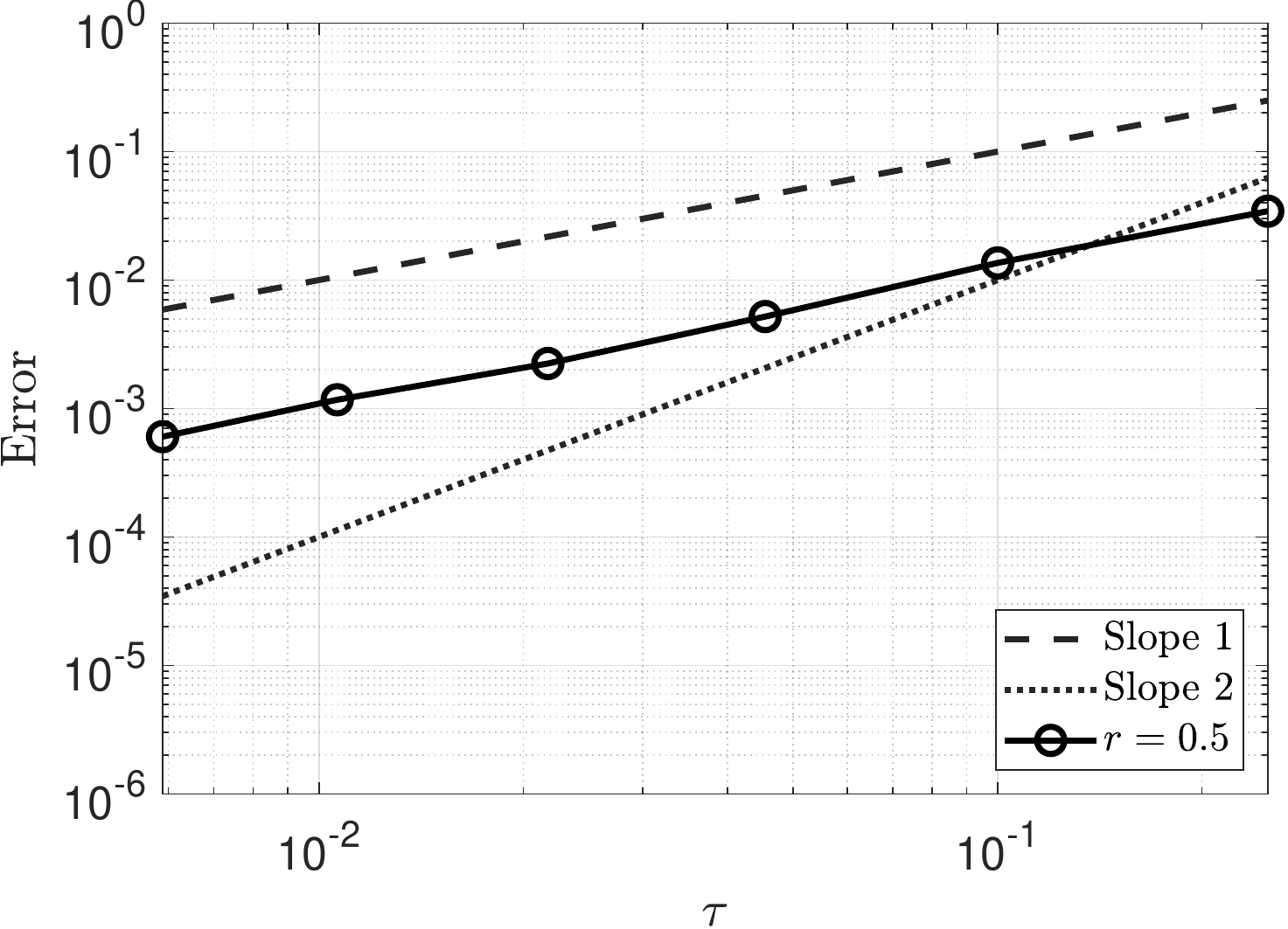}}
		\caption{A log-log plot of the the error wrt. $\hx$, $\hp$, $\hht$ for $\sigma_0 = 0.5$.}
		\label{fig:1}
	\end{figure}
	
	
	\section*{Acknowledgment}
	The authors would like to thank Wolfgang Dahmen for his advice regarding discrete convex envelopes.
	
	\bibliography{MyBibData}
	\bibliographystyle{abbrv} 
	
\end{document}